\renewcommand*\env@matrix[1][*\c@MaxMatrixCols c]{%
  \hskip -\arraycolsep
  \let\@ifnextchar\new@ifnextchar
  \array{#1}}
\newcommand{\R}{\mathbb{R}}
\DeclareMathOperator{\st}{s.t.}
\DeclareMathOperator{\Diag}{Diag}
\DeclareMathOperator{\myspan}{span}
\DeclareMathOperator{\rec}{rec.cone}
\DeclareMathOperator{\cone}{cone}
\DeclareMathOperator{\cvh}{conv.hull}
\DeclareMathOperator{\cnh}{conic.hull}
\DeclareMathOperator{\ccnh}{cl.conic.hull}
\DeclareMathOperator{\ccvh}{cl.conv.hull}
\DeclareMathOperator{\Null}{null}
\DeclareMathOperator{\Range}{Range}
\DeclareMathOperator{\apex}{apex}
\DeclareMathOperator{\tr}{trace}
\DeclareMathOperator{\bd}{bd}
\DeclareMathOperator{\myint}{int}
\newcommand{\eps}{\varepsilon}
\renewcommand{\S}{\mathcal{S}}
\newcommand{\E}{\mathcal{E}}
\newcommand{\F}{\mathcal{F}}
\newcommand{\G}{\mathcal{G}}
\newcommand{\Hc}{\mathcal{H}}
\newcommand{\K}{\mathcal{K}}
\newcommand{\Q}{\mathcal{Q}}
\newcommand{\T}{\mathcal{T}}
\newcommand{\be}{\begin{eqnarray}}
\newcommand{\ee}[1]{\label{#1}\end{eqnarray}}
\newcommand{\ese}{\end{eqnarray*}}
\newcommand{\bse}{\begin{eqnarray*}}
\newtheorem{condition}{Condition}
\newtheorem{observation}{Observation}
\journalname{Mathematical Programming}
\title{How to Convexify the Intersection \\ of a Second Order Cone \\
and a Nonconvex Quadratic}
\author{%
Samuel Burer
\and 
Fatma K{\i}l{\i}n\c{c}-Karzan
}
\institute{ S. Burer \at 
	Department of Management Sciences, University of Iowa, Iowa City, IA, 52242-1994, USA. \\
	\email{samuel-burer@uiowa.edu}
\and 
	F. K{\i}l{\i}n\c{c} Karzan \at
         Tepper School of Business, Carnegie Mellon University, Pittsburgh, PA, 15213, USA \\
          \email{fkilinc@andrew.cmu.edu}   
}
\date{Submitted on June 10, 2014; Revised on June 6, 2015 and May 19, 2016}
\begin{document}

\maketitle

\begin{abstract}

\noindent A recent series of papers has examined the extension of \sloppy
disjunctive-programming techniques to mixed-integer second-order-cone
programming. For example, it has been shown---by several authors using
different techniques---that the convex hull of the intersection of
an ellipsoid, $\E$, and a split disjunction, $(l - x_j)(x_j - u) \le
0$ with $l < u$, equals the intersection of $\E$ with an additional
second-order-cone representable (SOCr) set. In this paper, we study more
general intersections of the form $\K \cap \Q$ and $\K \cap \Q \cap H$,
where $\K$ is a SOCr cone, $\Q$ is a nonconvex cone defined by a single
homogeneous quadratic, and $H$ is an affine hyperplane. Under several
easy-to-verify conditions, we derive simple, computable convex
relaxations $\K \cap \S$ and $\K \cap \S \cap H$, where $\S$ is a SOCr
cone. Under further conditions, we prove that these two sets capture
precisely the corresponding conic/convex hulls. Our approach unifies
and extends previous results, and we illustrate its applicability and
generality with many examples.

\vspace{3mm}

\noindent {\bf Keywords:}
convex hull,
disjunctive programming,
mixed-integer linear programming,
mixed-integer nonlinear programming,
mixed-integer quadratic programming,
nonconvex quadratic programming,
second-order-cone programming,
trust-region subproblem.

\vspace{3mm}
\noindent {\bf Mathematics Subject Classification:} 90C25, 90C10, 90C11, 90C20, 90C26.

\end{abstract}


\section{Introduction}

In this paper, we study nonconvex intersections of the form $\K \cap
\Q$ and $\K \cap \Q \cap H$, where the cone $\K$ is second-order-cone
representable (SOCr), $\Q$ is a nonconvex cone defined by a single
homogeneous quadratic, and $H$ is an affine hyperplane. Our goal is
to develop tight convex relaxations of these sets and to characterize
the conic/convex hulls whenever possible. We are motivated by recent
research on Mixed Integer Conic Programs (MICPs), though our results
here enjoy wider applicability to nonconvex quadratic programs.

Prior to the study of MICPs in recent years, cutting plane theory has
been fundamental in the development of efficient and powerful solvers
for Mixed Integer Linear Programs (MILPs). In this theory, one considers
a convex relaxation of the problem, e.g., its continuous relaxation, and
then enforces integrality restrictions to eliminate regions containing
no integer feasible points---so-called \emph{lattice-free sets}. The
complement of a valid \emph{two-term linear disjunction}, say $x_j \le l \vee
x_j \ge u$, is a simple form of a lattice-free set. The additional
inequalities required to describe the convex hull of such a disjunction
are known as \emph{disjunctive cuts}. Such a disjunctive point of view
was introduced by Balas \cite{B1971} in the context of MILPs, and it has
since been studied extensively in mixed integer linear and nonlinear
optimization
\cite{%
Balas_79,Balas_Ceria_Cornuejols_93,Burer_Saxena_12,Cadoux_10,CS99,Cornuejols_Lemarechal_06,Kilinc_Linderoth_Luedtke_10,Saxena_Bonami_Lee_08,Sherali_Shetti_80%
}, complementarity \cite{Hu.et.al.2008,JSRF2006,NguyenTR11,Tawarmalani_Richard_Chung_10} and other nonconvex optimization
problems \cite{B2012,Burer_Saxena_12}. 
In the case of MILPs, several well-known classes of cuts such as
\emph{Chv{\'a}tal-Gomory}, \emph{lift-and-project}, \emph{mixed-integer
rounding (MIR)}, \emph{split}, and \emph{intersection cuts} are known to
be special types of disjunctive cuts. 
Stubbs and Mehrotra \cite{SM1999} and Ceria and Soares \cite{CS99} extended cutting plane theory from MILP to convex mixed integer problems. These works were followed by several papers 
\cite{%
B2011,D2009,Drewes_Pokutta_10,Kilinc_Linderoth_Luedtke_10,VAN08%
}
that investigated linear-outer-approximation based approaches, as
well as others that extended specific classes of inequalities, such as
Chv{\'a}tal-Gomory cuts \cite{CI2005} for MICPs and MIR cuts \cite{AV2010}
for SOC-based MICPs.

Recently there has been growing interest in developing closed-form
expressions for convex inequalities that fully describe the convex hull
of a disjunctive set involving an SOC. In this vein, G\"{u}nl\"{u}k
and Linderoth \cite{GL10perspective} studied a simple set involving
an SOC in $\R^3$ and a single binary variable and showed that the
resulting convex hull is characterized by adding a single SOCr
constraint. For general SOCs in $\R^n$, this line of work was furthered
by Dadush et al.\@ \cite{DDV2011}, who derived cuts for ellipsoids
based on parallel two-term disjunctions, that is, \emph{split
disjunctions}. Modaresi et al.\@ \cite{MKV} extended this by studying
\emph{intersection cuts} for SOC and all of its cross-sections (i.e.,
all conic sections), based on split disjunctions as well as a number
of other lattice-free sets such as ellipsoids and paraboloids. A
theoretical and computational comparison of intersection cuts from
\cite{MKV} with extended formulations and conic MIR inequalities from
\cite{AV2010} is given in \cite{MKV2}. Taking a different approach,
Andersen and Jensen \cite{AJ2013} derived an SOC constraint describing
the convex hull of a split disjunction applied to an SOC. Belotti et
al.\@ \cite{Belotti_Goez_Polik_Terlaky_12} studied families of quadratic
surfaces having fixed intersections with two given hyperplanes,
and in \cite{BGPRT}, they identified a procedure for constructing
two-term disjunctive cuts when the sets defined by the disjunctions
are bounded and disjoint. K{\i}l{\i}n\c{c}-Karzan \cite{KK} introduced
and examined minimal valid linear inequalities for general conic sets
with a disjunctive structure, and under a mild technical assumption,
established that they are sufficient to describe the resulting closed
convex hulls. For general two-term disjunctions on regular (closed,
convex, pointed with nonempty interior) cones, K{\i}l{\i}n\c{c}-Karzan
and Y{\i}ld{\i}z \cite{KY14} studied the structure of tight minimal
valid linear inequalities. In the particular case of SOCs, based on
conic duality, a class of convex valid inequalities that is sufficient
to describe the convex hull were derived in \cite{KY14} along with
the conditions for SOCr representability of these inequalities as
well as for the sufficiency of a single inequality from this class.
This work was recently extended in Y{\i}ld{\i}z and Cornu\'ejols
\cite{YC14} to all cross-sections of SOC that can be covered by the same
assumptions of \cite{KY14}. Bienstock and Michalka \cite{BM} studied
the characterization and separation of valid linear inequalities that
convexify the epigraph of a convex, differentiable function whose domain
is restricted to the complement of a convex set defined by linear or
convex quadratic inequalities. Although all of these authors take
different approaches, their results are comparable, for example, in the
case of analyzing split disjunctions of the SOC or its cross-sections.
We remark also that these methods convexify in the space of the
original variables, i.e., they do not involve lifting. For additional
convexification approaches for nonconvex quadratic programming, which
convexify in the lifted space of products $x_i x_j$ of variables, we
refer the reader to
\cite{Anstreicher2010computable,BaoST11,Burer2013second,Burer_Saxena_12,TawarmalaniRX13},
for example.

In this paper, our main contributions can be summarized as follows (see
Section \ref{sec:result} and Theorem \ref{the:main} in particular).
First, we derive a simple, computable convex relaxation $\K \cap \S$
of $\K \cap \Q$, where $\S$ is an additional SOCr cone. This
also provides the convex relaxation $\K \cap \S \cap H \supseteq \K
\cap \Q \cap H$. The derivation relies on several easy-to-verify
conditions (see Section \ref{sec:compdetails}). Second, we identify stronger conditions guaranteeing
moreover that
$\K \cap \S = \ccnh(\K \cap \Q)$ and $\K \cap \S \cap H =
\ccvh(\K \cap \Q \cap H)$, where {\em cl\/} indicates the closure, {\em
conic.hull\/} indicates the conic hull, and {\em conv.hull\/} indicates
the convex hull. 
Our approach unifies and significantly extends previous results. In particular, in contrast to the existing literature on cuts based on lattice-free sets, here we allow a general $\Q$ without making an assumption that $\R^n\setminus \Q$ is convex. 
We illustrate the applicability and generality of our approach with many examples and explicitly contrast our work with the existing literature.

Our approach can be seen as a variation of the following basic, yet
general, idea of conic aggregation to generate valid inequalities.
Suppose that $f_0 = f_0(x)$ is convex, while $f_1 = f_1(x)$ is
nonconvex, and suppose we are interested in the closed convex hull of
the set $Q:=\{x:\, f_0 \leq 0,\, f_1 \leq 0\}$. For any $0 \le t \le
1$, the inequality $f_t := (1-t) f_0 + t f_1 \leq 0$ is valid for $Q$,
but $f_t$ is generally nonconvex. Hence, it is natural to seek values
of $t$ such that the function $f_t$ is convex for all $x$. One might
even conjecture that some particular convex $f_s$ with $0 \le s \le 1$
guarantees $\ccvh(Q) = \{ x : f_0 \le 0, f_s \le 0 \}$. However, it is
known that this approach cannot generally achieve the convex hull even
when $f_0,f_1$ are quadratic functions; see \cite{MKV}. Such aggregation
techniques to obtain convex under-estimators have also been explored in
the global-optimization literature, albeit without explicit results on
the resulting convex hull descriptions (see \cite{Adjiman_etal98} for
example).

In this paper, we follow a similar approach in spirit, but instead of
determining $0 \le t \le 1$ guaranteeing the convexity of $f_t$ for all
$x$, we only require ``almost" convexity, that is, the function $f_t$ is
required to be convex on $\{x:f_0 \leq 0\}$. This weakened requirement
is crucial. In particular, it allows us to obtain convex hulls for many
cases where $\{ x : f_0 \le 0 \}$ is SOCr and $f_1$ is a nonconvex
quadratic, and we recover all of the known results regarding two-term
disjunctions cited above (see Section \ref{sec:twoterm}). We note
that using quite different techniques and under completely different
assumptions, a similar idea of aggregation for quadratic functions has
been explored in \cite{BGPRT,MKV} as well. Specifically, our weakened
requirement is in contrast to the developments in \cite{MKV}, which
explicitly requires the function $f_t$ to be convex everywhere. Also,
our general $\Q$ allows us to study general nonconvex quadratics $f_1$
as opposed to the specific ones arising from two-term disjunctions
studied in \cite{BGPRT}. As a practical and technical matter, instead
of working directly with convex functions in this paper, we work in
the equivalent realm of convex sets, in particular SOCr cones. Section
\ref{sec:socr} discusses in detail the features of SOCr cones required
for our analysis.

Compared to the previous literature on MICPs, our work here is
broader in that we study a general nonconvex cone $\Q$ defined by
a single homogeneous quadratic function. As a result, we assume
neither the underlying matrix defining the homogeneous quadratic
$\Q$ to be of rank at most 2 nor $\R^n\setminus\Q$ to be convex. 
 This is in contrast to a key underlying assumption used in the literature. 
Specifically, the majority of the earlier literature on MICPs
focus on specific lattice-free sets, e.g., all of the works
\cite{AJ2013,AV2010,BGPRT,DDV2011,KY14,YC14} focus on either split or
two-term disjunctions on SOCs or its cross-sections. In the case of
two-term disjunctions, the matrix defining the homogeneous quadratic
for $\Q$ is of rank at most 2; and moreover, the complement of any two-term disjunction is
a convex set. 
Even though, nonconvex quadratics $\Q$ with rank higher than 2 are considered in \cite{MKV}, unlike our general, $\Q$ this is done under the assumption that the complement of the nonconvex quadratic defines a convex set.  
Our general $\Q$ allows for a \emph{unified framework} and
works under weaker assumptions. 
In Sections \ref{sec:sub:ExEllipsoid} and \ref{sec:twoterm} and the Online Supplement, 
we illustrate and highlight these features of
our approach and contrast it with the existing literature through a
series of examples. Bienstock and Michalka \cite{BM} also consider
more general $\Q$ under the assumption that $\R^n\setminus \Q$ is
convex, but their approach is quite different than ours. Whereas
\cite{BM} relies on polynomial time procedures for separating and
tilting valid linear inequalities, we directly give the convex
hull description. In contrast, our study of the general, nonconvex
quadratic cone $\Q$ allows its complement $\R^n\setminus \Q$ to be
nonconvex as well.

We remark that our convexification tools for general nonconvex
quadratics have potential applications beyond MICPs, for example
in the nonconvex quadratic programming domain. We also can, for
example, characterize: the convex hull of the deletion of an
arbitrary ball from another ball; and the convex hull of the
deletion of an arbitrary ellipsoid from another ellipsoid sharing
the same center. In addition, we can use our results to solve the
classical trust region subproblem \cite{Conn.et.al.2000} using SOC
optimization, complementing previous approaches relying on nonlinear
\cite{Gould.et.al.1999,More.Sorensen.1983} or semidefinite programming
\cite{Rendl.Wolkowicz.1997}. Section \ref{sec:general} discusses these
examples.

Another useful feature of our approach is that we clearly distinguish
the conditions guaranteeing validity of our relaxation from those
ensuring sufficiency. In \cite{AJ2013,BGPRT,DDV2011,MKV}, validity
and sufficiency are intertwined making it difficult to construct
convex relaxations when their conditions are only partly satisfied.
Furthermore, our derivation of the convex relaxation is efficiently
computable and relies on conditions that are easily verifiable. Finally,
our conditions regarding the cross-sections (that is, intersection with
the affine hyperplane $H$) are applicable for general cones other than
SOCs.

We would like to stress that the inequality describing the SOCr set
$\S$ is efficiently computable. In other words, given the sets $\K \cap
\Q$ and $\K \cap \Q \cap \Hc$, one can verify in polynomial time the
required conditions and then calculate in polynomial time the inequality
for $\S$ to form the relaxations $\K \cap \S$ and $\K \cap \S \cap
H$. The core operations include calculating eigenvalues/eigenvectors
for several symmetric and non-symmetric matrices and solving a
two-constraint semidefinite program. The computation can also be
streamlined in cases when any special structure of $\K$ and $\Q$ is
known ahead of time.

The paper is structured as follows. Section \ref{sec:socr} discusses
the details of SOCr cones, and Section \ref{sec:result} states our
conditions and main theorem. In Section \ref{sec:compdetails}, we
provide a detailed discussion and pseudocode for verifying our
conditions and computing the resulting SOC based relaxation $\S$. 
Section \ref{sec:sub:ExEllipsoid} then provides a low-dimensional
example with figures and comparisons with existing literature. We provide more examples with corresponding figures and comparisons in the Online Supplement accompanying this article.    
In Section \ref{sec:proof}, we
prove the main theorem, and then in Sections \ref{sec:twoterm} and
\ref{sec:general}, we discuss and prove many interesting general
examples covered by our theory. Section \ref{sec:conclusion} concludes
the paper with a few final remarks. Our notation is mostly standard. We
will define any particular notation upon its first use.

\section{Second-Order-Cone Representable Sets} \label{sec:socr}

Our analysis in this paper is based on the concept of SOCr
(second-order-cone representable) cones. In this section, we define and
introduce the basic properties of such sets.

A cone $\F^+ \subseteq \R^n$ is said to be {\em second-order-cone
representable\/} (or {\em SOCr}) if there exists a matrix $0 \ne B \in
\R^{n \times (n-1)}$ and a vector $b \in \R^n$ such that the nonzero
columns of $B$ are linearly independent, $b \not\in \Range(B)$, and
\begin{equation} \label{equ:def:F+}
  \F^+ = \{ x : \|B^T x \| \le b^T x \},
\end{equation}
where $\|\cdot\|$ denotes the usual Euclidean norm. 
The negative of $\F^+$ is also SOCr:
\begin{equation} \label{equ:def:F-}
  \F^- := -\F^+ = \{ x : \|B^T x \| \le -b^T x \}.
\end{equation}
Defining $A := BB^T - bb^T$, the union $\F^+ \cup \F^-$ corresponds to
the homogeneous quadratic inequality $x^T A x \le 0$:
\begin{equation} \label{equ:def:F}
  \F := \F^+ \cup \F^- = \{ x : \|B^T x \|^2 \le (b^T x)^2 \} = \{ x : x^T A x \le 0 \}.
\end{equation}
We also define
\begin{align*}
  \myint(\F^+) &:= \{ x : \|B^T x \| < b^T x \} \\ 
  \bd(\F^+) &:= \{ x : \|B^T x \| = b^T x \} \\ 
  \apex(\F^+) &:= \{ x : B^T x = 0, b^T x = 0 \}.  
\end{align*}

We next study properties of $\F,\F^+,\F^-$ such as their
representations and uniqueness thereof. On a related note, Mahajan
and Munson \cite{MM10} have also studied sets associated with
nonconvex quadratics with a single negative eigenvalue but from a more
computational point of view. The following proposition establishes some
important features of SOCr cones:

\begin{proposition} \label{pro:basicprops}
Let $\F^+$ be SOCr as in (\ref{equ:def:F+}), and define $A := BB^T -
bb^T$. Then $\apex(\F^+) = \Null(A)$, $A$ has at least one
positive eigenvalue, and $A$ has exactly one negative eigenvalue. As a
consequence, $\myint(\F^+) \ne \emptyset$.
\end{proposition}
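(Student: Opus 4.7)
The plan is to prove the four claims in turn, with the eigenvalue counts being the technical heart via a Sylvester-law-of-inertia argument.

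For $\apex(\F^+) = \Null(A)$, the forward inclusion is immediate from $Ax = BB^T x - b\,(b^T x)$. For the reverse, if $Ax = 0$ then $BB^T x = (b^T x)\, b$; since $b \not\in \Range(B)$, this forces $b^T x = 0$, and then $BB^T x = 0$ gives $\|B^T x\|^2 = x^T B B^T x = 0$, so $B^T x = 0$ as well. Hence $x \in \apex(\F^+)$.

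For the eigenvalue structure, I would argue by congruence. Let $B_{i_1}, \ldots, B_{i_k}$ denote the nonzero columns of $B$; by hypothesis these are linearly independent, and $b \not\in \Range(B) = \myspan\{B_{i_1}, \ldots, B_{i_k}\}$ adjoins one more independent direction. Thus $U := [\,B_{i_1}, \ldots, B_{i_k},\, b\,] \in \R^{n \times (k+1)}$ has full column rank $k+1$, and
\[
A \;=\; BB^T - bb^T \;=\; U \, \diag(\underbrace{1,\ldots,1}_{k},\, -1) \, U^T,
\]
because dropping the zero columns from $B$ does not change $BB^T$. Extending $U$ to an invertible $n \times n$ matrix $\tilde U$ exhibits $A$ as congruent to $\diag(I_k,\, -1,\, 0_{n-k-1})$, so by Sylvester's law of inertia the signature of $A$ is $(k, 1, n-k-1)$. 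Since $B \ne 0$ gives $k \ge 1$, this yields both ``at least one positive'' and ``exactly one negative'' eigenvalue.

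The final claim $\myint(\F^+) \ne \emptyset$ follows almost for free: the negative eigenvalue produces some $x^*$ with $(x^*)^T A x^* < 0$, i.e., $\|B^T x^*\| < |b^T x^*|$; this strict inequality forces $b^T x^* \ne 0$, and replacing $x^*$ by $-x^*$ if needed makes $b^T x^* > 0$, placing $x^* \in \myint(\F^+)$. I anticipate no serious obstacle; the only bit of care is the bookkeeping around the possibly zero columns of $B$ in the congruence step, which is handled cleanly by the identity $BB^T = \sum_{j} B_{i_j} B_{i_j}^T$.
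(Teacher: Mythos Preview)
Your proof is correct. The argument for $\apex(\F^+) = \Null(A)$ and the final step showing $\myint(\F^+) \ne \emptyset$ match the paper's reasoning essentially verbatim.

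The genuine difference is in how you establish the eigenvalue counts. You write $A = U\,\diag(I_k,-1)\,U^T$ with $U$ of full column rank, extend to an invertible matrix, and invoke Sylvester's law of inertia to read off the signature $(k,1,n-k-1)$ in one stroke. The paper instead argues in two separate pieces: first it observes that $A = BB^T - bb^T$ is a positive semidefinite matrix minus a rank-one term with $b \not\in \Range(B)$, which forces at least one positive and at most one negative eigenvalue; then it explicitly decomposes $b = x + y$ with $x \in \Range(B)$, $0 \ne y \in \Null(B^T)$, and checks $y^T A y = -(b^T y)^2 < 0$ to exhibit a negative direction. Your congruence approach is cleaner and yields the full signature (including the dimension of $\Null(A)$) at once, at the cost of quoting Sylvester's law. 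The paper's approach is more elementary and self-contained, and as a byproduct hands you a concrete interior point $\pm y$ rather than an abstract negative eigenvector. Both are perfectly sound.
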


\begin{proof}
For any $x$, we have the equation
\begin{equation} \label{equ:local2}
Ax = (BB^T - bb^T)x = B(B^T x) - b(b^T x).
\end{equation}
So $x \in \apex(\F^+)$ implies $x \in \Null(A)$. The converse also holds
by (\ref{equ:local2}) because, by definition, the nonzero columns of
$B$ are independent and $b \not\in \Range(B)$. Hence, $\apex(\F^+) =
\Null(A)$. 

The equation $A = BB^T - bb^T$, with $0 \ne BB^T \succeq 0$ and $bb^T
\succeq 0$ rank-1 and $b \not\in \Range(B)$, implies that $A$ has at
least one positive eigenvalue and at most one negative eigenvalue.
Because $b \not\in \Range(B)$, we can write $b = x + y$ such that $x \in
\Range(B)$, $0 \ne y \in \Null(B^T)$, and $x^T y = 0$. Then
\[
    y^T A y = y^T (BB^T - bb^T) y = 0 - (b^Ty)^2 = -\|y\|^2 < 0,
\]
showing that $A$ has exactly one negative eigenvalue, and so
$\myint(\F^+)$ contains either $y$ or $-y$. 
\qed \end{proof}

\noindent We define analogous sets $\myint(\F^-)$, $\bd(\F^-)$, and
$\apex(\F^-)$ for $\F^-$. In addition:
\begin{align*}
  \myint(\F) &:= \{ x : x^T A x < 0 \} = \myint(\F^+) \cup \myint(\F^-)\\
  \bd(\F) &:= \{ x : x^T A x = 0 \} = \bd(\F^+) \cup \bd(\F^-).
\end{align*}
Similarly, we have $\apex(\F^-) = \Null(A)=\apex(\F^+)$, and if $A$ has exactly one
negative eigenvalue, then $\myint(\F^-) \ne \emptyset$ and $\myint(\F)
\ne \emptyset$.

When considered as a pair of sets $\{\F^+,\F^-\}$, it is possible that
another choice $(\bar B, \bar b)$ in place of $(B,b)$ leads to the
same pair and hence to the same $\F$. For example, $(\bar B, \bar b) =
(-B,-b)$ simply switches the roles of $\F^+$ and $\F^-$, but $\F$ does
not change. However, we prove next that $\F$ is essentially invariant
up to positive scaling. As a corollary, any alternative $(\bar B, \bar
b)$ yields $A = \rho(\bar B \bar B^T - \bar b \bar b^T)$ for some $\rho
> 0$, i.e., $A$ is essentially invariant with respect to its $(B,b)$
representation.

\begin{proposition}\label{prop:representationUniqueness}
Let $A,\bar{A}$ be two $n\times n$ symmetric matrices such that
$\{x\in\R^n:~ x^TAx\leq 0\}=\{x\in\R^n:~ x^T\bar{A}x\leq 0\}$. Suppose
that $A$ satisfies $\lambda_{\min}(A)<0<\lambda_{\max}(A)$. Then there
exists $\rho> 0$ such that $\bar{A}=\rho A$.
\end{proposition}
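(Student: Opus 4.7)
The plan is to exploit the mixed signature $\lambda_{\min}(A) < 0 < \lambda_{\max}(A)$ by restricting the quadratic forms $q_A(x) := x^T A x$ and $q_{\bar A}(x) := x^T \bar A x$ to lines connecting points of opposite $q_A$-sign, and then reading off the scalar $\rho$ from a univariate pencil argument. First I would pick $u \in \R^n$ with $q_A(u) < 0$ and $v \in \R^n$ with $q_A(v) > 0$; both exist by the signature assumption. The polynomial
\begin{equation*}
p(\tau) := q_A(u + \tau v) = q_A(u) + 2\tau \, u^T A v + \tau^2 q_A(v)
\end{equation*}
is then genuinely quadratic in $\tau$ (leading coefficient $q_A(v) > 0$) with a strictly negative value at $\tau = 0$, so its sublevel set $\{\tau : p(\tau) \le 0\}$ is a bounded closed interval $[\tau_1, \tau_2]$ with $\tau_1 < 0 < \tau_2$.

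Next I would observe that the hypothesis forces $\{\tau : \bar p(\tau) \le 0\} = [\tau_1, \tau_2]$ for $\bar p(\tau) := q_{\bar A}(u + \tau v)$. Since this common sublevel set is a \emph{bounded} nonempty interval, $\bar p$ can be neither constant nor linear, so its leading coefficient $q_{\bar A}(v)$ is nonzero; and since $v \notin \{q_A \le 0\} = \{q_{\bar A} \le 0\}$, we actually have $q_{\bar A}(v) > 0$. Hence both $p$ and $\bar p$ factor over the same two roots $\tau_1, \tau_2$, giving $\bar p(\tau) = \rho \, p(\tau)$ identically in $\tau$, where $\rho := q_{\bar A}(v)/q_A(v) > 0$. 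Matching the coefficients of $\tau^0, \tau^1, \tau^2$ yields
\begin{equation*}
q_{\bar A}(u) = \rho \, q_A(u), \qquad u^T \bar A v = \rho \, u^T A v, \qquad q_{\bar A}(v) = \rho \, q_A(v).
\end{equation*}
The first identity shows that $\rho = q_{\bar A}(u)/q_A(u)$ is independent of $v$, and the third that it is independent of $u$; together these force $\rho$ to be a single positive global constant, identical for every admissible pair $(u, v)$.

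For the final step I would use that $\{u : q_A(u) < 0\}$ and $\{v : q_A(v) > 0\}$ are both nonempty open subsets of $\R^n$ and so each spans $\R^n$. The middle bilinear identity $u^T \bar A v = \rho \, u^T A v$ then extends by linearity to all of $\R^n \times \R^n$, from which $\bar A = \rho A$ follows. I expect the main obstacle to be the recognition in the second paragraph that $\bar p$ is a genuine quadratic with precisely the two roots $\tau_1, \tau_2$, since a priori $q_{\bar A}(v)$ could vanish or $\bar p$ could have extraneous roots; the crucial leverage is that the common sublevel set along the line is a \emph{bounded} nonempty interval, which rigidly pins down both the degree and the roots of $\bar p$.
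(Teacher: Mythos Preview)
Your argument is correct and complete, but it takes a genuinely different route from the paper. The paper invokes the S-lemma twice: once to produce $\lambda_1 \ge 0$ with $\lambda_1 A \succeq \bar A$, and once with the roles of $A$ and $\bar A$ reversed to produce $\lambda_2 \ge 0$ with $\lambda_2 \bar A \succeq A$; chaining these gives $(1-\lambda_1\lambda_2)A \succeq 0$, and indefiniteness of $A$ forces $\lambda_1\lambda_2 = 1$, hence $\bar A = \lambda_1^{-1} A$. Your approach is more elementary and self-contained: by restricting both quadratic forms to lines $u + \tau v$ joining points of opposite $q_A$-sign, you use only the fact that a univariate quadratic is determined up to a positive scalar by its bounded nonempty sublevel interval, read off the constant $\rho$, and then extend the bilinear identity $u^T \bar A v = \rho\, u^T A v$ to all of $\R^n \times \R^n$ via the openness of $\{q_A<0\}$ and $\{q_A>0\}$. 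The paper's proof is shorter once the S-lemma is available; yours avoids any external theorem of the alternative and yields the explicit formula $\rho = q_{\bar A}(v)/q_A(v)$ for any $v$ with $q_A(v)>0$.
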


\begin{proof}
Since $\lambda_{\min}(A) < 0$, there exists $\bar x \in \R^n$ such
that $\bar x^T A \bar x < 0$. Because $x^TAx \leq 0 \Leftrightarrow
x^T\bar{A}x\leq 0$, there exists no $x$ such that $x^T A x \le 0$
and $x^T (-\bar A) x < 0$. Then, by the S-lemma (see Theorem 2.2 in
\cite{Polik.Terlaky.2007}, for example), there exists $\lambda_1 \ge 0$
such that $-\bar A + \lambda_1 A \succeq 0$. Switching the roles of $A$
and $\bar A$, a similar argument implies the existence of $\lambda_2
\ge 0$ such that $-A + \lambda_2 \bar A \succeq 0$. Note $\lambda_2
> 0$; otherwise, $A$ would be negative semidefinite, contradicting
$\lambda_{\max}(A) > 0$. Likewise, $\lambda_1 > 0$.
Hence,
\[
A \succeq \frac{1}{\lambda_1} \bar A \succeq \frac{1}{\lambda_1 \lambda_2} A
\ \Longleftrightarrow \ (1 - \lambda_1 \lambda_2) A \succeq 0.
\]
Since $\lambda_{\min}(A) < 0 < \lambda_{\max}(A)$, we conclude
$\lambda_1 \lambda_2 = 1$, which in turn implies $A =
\frac{1}{\lambda_1} \bar A$, as claimed. \qed
\end{proof}

\begin{corollary}
Let $\{\F^+,\F^-\}$ be SOCr sets as in (\ref{equ:def:F+}) and
(\ref{equ:def:F-}), and define $A := BB^T - bb^T$. Let $(\bar B,\bar
b)$ be another choice in place of $(B,b)$ leading to the same pair $\{
\F^+,\F^- \}$. Then $A = \rho(\bar B \bar B^T - \bar b \bar b^T)$ for
some $\rho > 0$.
\end{corollary}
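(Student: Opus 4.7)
The plan is to reduce the corollary immediately to Proposition \ref{prop:representationUniqueness} by checking its two hypotheses for $A$ and $\bar A := \bar B \bar B^T - \bar b \bar b^T$.

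First I would invoke equation (\ref{equ:def:F}): since $(B,b)$ and $(\bar B,\bar b)$ both represent the same pair $\{\F^+,\F^-\}$, they represent the same union $\F$, and the description $\F = \{x : x^T A x \le 0\}$ (together with its analogue for $\bar A$) gives $\{x : x^T A x \le 0\} = \{x : x^T \bar A x \le 0\}$. This verifies the set-equality hypothesis of Proposition \ref{prop:representationUniqueness}.

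Second I would check the eigenvalue hypothesis on $A$. By Proposition \ref{pro:basicprops}, $A$ has at least one positive eigenvalue and exactly one negative eigenvalue, so in particular $\lambda_{\min}(A) < 0 < \lambda_{\max}(A)$, which is exactly what Proposition \ref{prop:representationUniqueness} requires.

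Then I would apply Proposition \ref{prop:representationUniqueness} to obtain some $\rho' > 0$ with $\bar A = \rho' A$, and set $\rho := 1/\rho' > 0$ to conclude $A = \rho \bar A = \rho(\bar B \bar B^T - \bar b \bar b^T)$, which is the desired statement. There is essentially no main obstacle here: the work has already been done in Propositions \ref{pro:basicprops} and \ref{prop:representationUniqueness}, and the corollary is simply their combination. The only small point worth flagging is that the equality $\{x: x^T A x \le 0\} = \{x : x^T \bar A x \le 0\}$ follows from the set equality $\F^+ \cup \F^- = \bar\F^+ \cup \bar\F^-$ alone (i.e., we do not need the individual identifications $\F^+ = \bar\F^+$ and $\F^- = \bar\F^-$, which would anyway hold only up to the sign ambiguity $(\bar B, \bar b) = \pm(B,b)/\sqrt{\rho}$).
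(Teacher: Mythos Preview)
Your proposal is correct and matches the paper's intended approach: the corollary is stated without proof precisely because it follows immediately from Proposition~\ref{prop:representationUniqueness} once Proposition~\ref{pro:basicprops} supplies the eigenvalue hypothesis, exactly as you outline.
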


We can reverse the discussion thus far to start from a symmetric
matrix $A$ with at least one positive eigenvalue and a single negative
eigenvalue and define associated SOCr cones $\F^+$ and $\F^-$.
Indeed, given such an $A$, let $Q \Diag(\lambda) Q^T$ be a spectral
decomposition of $A$ such that $\lambda_1 < 0$. Let $q_j$ be the $j$-th
column of $Q$, and define
\begin{equation} \label{equ:def:Bb}
  B :=
  \begin{pmatrix}
    \lambda_{2}^{1/2} q_{2} & \cdots & \lambda_{n}^{1/2} q_{n} 
  \end{pmatrix} \in \R^{n \times (n-1)},
  \ \ \ \
  b := (-\lambda_{1})^{1/2} q_{1} \in \R^n.
\end{equation}
Note that the nonzero columns of $B$ are linearly independent and $b
\not\in \Range(B)$. Then $A = BB^T - bb^T$, and $\F = \F^+ \cup \F^-$
can be defined as in (\ref{equ:def:F+})--(\ref{equ:def:F}). An important
observation is that, as a collection of sets, $\{ \F^+,\F^- \}$ is
independent of the choice of spectral decomposition.

\begin{proposition} \label{pro:invariant}
Let $A$ be a given symmetric matrix with at least one positive
eigenvalue and a single negative eigenvalue, and let $A = Q
\Diag(\lambda) Q^T$ be a spectral decomposition such that $\lambda_1
< 0$. Define the SOCr sets $\{ \F^+,\F^- \}$ according to
(\ref{equ:def:F+}) and (\ref{equ:def:F-}), where $(B,b)$ is given by
(\ref{equ:def:Bb}). Similarly, let $\{\bar\F^+,\bar\F^-\}$ be defined
by an alternative spectral decomposition $A = \bar Q \Diag(\bar\lambda)
\bar Q^T$. Then $\{\bar \F^+,\bar\F^-\} = \{\F^+,\F^-\}$.
\end{proposition}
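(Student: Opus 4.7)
The strategy is to reduce the statement to a uniqueness fact about $\myint(\F)$. First observe that both spectral decompositions recover the \emph{same} matrix on the right-hand side of (\ref{equ:def:Bb}): expanding $A = \sum_{j=1}^n \lambda_j q_j q_j^T$ and isolating the single negative-eigenvalue term gives exactly $A = BB^T - bb^T$, and the identical identity holds with $(\bar B,\bar b)$ in place of $(B,b)$. Hence by (\ref{equ:def:F}) one has $\F^+ \cup \F^- = \{x : x^T A x \le 0\} = \bar\F^+ \cup \bar\F^-$, and only the splitting into the two convex halves remains to be matched.

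Next I would analyze $\myint(\F) = \{x : x^T A x < 0\}$. Since $A$ has exactly one negative and at least one positive eigenvalue, an orthogonal change of coordinates puts $\myint(\F)$ in the form $\{y : |\lambda_1| y_1^2 > \sum_{j : \lambda_j > 0} \lambda_j y_j^2\}$, with any zero-eigenvalue coordinates unconstrained. This set has exactly two connected components, distinguished by the sign of $y_1$; call them $C_1$ and $C_2$. On the other hand, each of $\myint(\F^+) = \{x : \|B^T x\| < b^T x\}$ and $\myint(\F^-) = \{x : \|B^T x\| < -b^T x\}$ is convex, hence connected, and nonempty by Proposition \ref{pro:basicprops}; moreover they are disjoint because membership forces $b^T x > 0$ and $b^T x < 0$, respectively. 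A direct check (if $x^T A x < 0$, then $\|B^T x\| < |b^T x|$, which forces $b^T x \ne 0$ and then places $x$ in the appropriate $\myint(\F^\pm)$) shows $\myint(\F^+) \cup \myint(\F^-) = \myint(\F) = C_1 \cup C_2$. Since each convex piece lies inside a single component, one concludes $\{\myint(\F^+),\myint(\F^-)\} = \{C_1,C_2\}$, and the identical argument applied to $(\bar B,\bar b)$ yields $\{\myint(\bar\F^+),\myint(\bar\F^-)\} = \{C_1,C_2\}$.

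To close the proof, observe that each of $\F^+, \F^-, \bar\F^+, \bar\F^-$ is a closed convex set with nonempty interior, and therefore equals the closure of its own interior. Taking closures in the equality of interior-pairs above gives $\{\F^+,\F^-\} = \{\bar\F^+,\bar\F^-\}$, which is the claim. The only step requiring any care is the two-component claim for $\myint(\F)$, but this is a direct consequence of the signature of $A$: the zero-eigenvalue directions contribute only a linear factor and cannot merge the two sheets separated by the sign of the negative-eigenvalue coordinate.
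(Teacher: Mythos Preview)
Your argument is correct, but it takes a substantially different route from the paper's. The paper's proof is a two-line algebraic observation: since $A$ has a \emph{simple} negative eigenvalue, the unit eigenvector $q_1$ is determined up to sign, so $\bar b = \pm b$; and since $BB^T = A + bb^T$ is the positive-semidefinite part of $A$ (hence invariant under choice of orthonormal eigenbasis), one has $\|\bar B^T x\| = \|B^T x\|$ for every $x$. These two facts immediately give $\{\bar\F^+,\bar\F^-\} = \{\F^+,\F^-\}$.

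Your approach is topological rather than algebraic: you show that $\myint(\F) = \{x : x^T A x < 0\}$ has exactly two connected components (a consequence of the signature of $A$), identify $\myint(\F^\pm)$ with those components via convexity and disjointness, and then recover $\F^\pm$ by taking closures. This works cleanly and is a nice conceptual explanation of \emph{why} the pair is canonical---the two halves are intrinsically the two sheets of the quadric, independent of any spectral choice. The cost is that you have to verify the two-component claim and the closure-of-interior step, whereas the paper's argument sidesteps both by exploiting the algebraic rigidity of a simple eigenvalue. Either proof is fine; the paper's is shorter, yours is arguably more illuminating about the geometry.
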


\begin{proof}
Let $(\bar B, \bar b)$ be given by the alternative spectral
decomposition. Because $A$ has a single negative eigenvalue, $\bar b =
b$ or $\bar b = -b$. In addition, we claim $\| \bar B^T x \| = \| B^T
x\|$ for all $x$. This holds because $\bar B \bar B^T = BB^T$ is the
positive semidefinite part of $A$. This proves the result.
\qed \end{proof}

\noindent To resolve the ambiguity inherent in Proposition
\ref{pro:invariant}, one could choose a specific $\bar x \in
\myint(\F)$, which exists by Proposition \ref{pro:basicprops}, and
enforce the convention that, for any spectral decomposition, $\F^+$ is
chosen to contain $\bar x$. This simply amounts to flipping the sign of
$b$ so that $b^T \bar x > 0$.

\section{The Result and Its Computability} \label{sec:result}

In Section \ref{sec:subresult}, we state our main theorem (Theorem
\ref{the:main}) and the conditions upon which it is based. The proof
of Theorem \ref{the:main} is delayed until Section \ref{sec:proof}. In
Section \ref{sec:compdetails}, we discuss computational details
related to our conditions and Theorem \ref{the:main}.

\subsection{The result} \label{sec:subresult}

To begin, let $A_0$ be a symmetric matrix satisfying the following:

\begin{condition} \label{ass:one_neg_eval}
$A_0$ has at least one positive eigenvalue and exactly one negative
eigenvalue.
\end{condition}

\noindent As described in Section \ref{sec:socr}, we may define SOCr
cones $\F_0 = \F_0^+ \cup \F_0^-$ based on $A_0$. We also introduce a
symmetric matrix $A_1$ and define the cone $\F_1 := \{ x : x^T A_1 x
\le 0 \}$ in analogy with $\F_0$. However, we do {\em not\/} assume
that $A_1$ has exactly one negative eigenvalue, so $\F_1$ does not
necessarily decompose into two SOCr cones.

We investigate the set $\F_0^+ \cap \F_1$, which has been expressed
as $\K \cap \Q$ in the Introduction.
In particular, we would like to develop strong convex relaxations of
$\F_0^+ \cap \F_1$ and, whenever possible, characterize its closed conic
hull. We focus on the full-dimensional case, and so we assume:

\begin{condition} \label{ass:interior}
There exists $\bar x \in \myint(\F_0^+ \cap \F_1)$.
\end{condition}

\noindent Note that $\myint(\F_0^+ \cap \F_1) = \myint(\F_0^+)
\cap \myint(\F_1)$, and so Condition \ref{ass:interior} is equivalent to 
\begin{equation} \label{equ:neginnprod}
  \bar x^T A_0 \bar x < 0
  \ \ \ \ \ \text{and} \ \ \ \ \
  \bar x^T A_1 \bar x < 0.
\end{equation}
In particular, this implies $A_1$ has at least one negative eigenvalue.

The first part of Theorem \ref{the:main} below establishes that
$\ccnh(\F_0^+ \cap \F_1)$ is contained within the convex intersection of
$\F_0^+$ with a second set of the same type, i.e., one that is SOCr. In
addition to Conditions \ref{ass:one_neg_eval} and \ref{ass:interior}, we
require the following condition, which handles the singularity of $A_0$
carefully via several cases:

\begin{condition} \label{ass:A0_apex}
Either (i) $A_0$ is nonsingular, (ii) $A_0$ is singular and $A_1$ is
positive definite on $\Null(A_0)$, or (iii) $A_0$ is singular and $A_1$
is negative definite on $\Null(A_0)$.
\end{condition}

Conditions \ref{ass:one_neg_eval}--\ref{ass:A0_apex} will ensure
(see Proposition \ref{pro:spos} in Section \ref{sec:proof:interval})
the existence of a maximal $s \in [0,1]$ such that $$A_t:=(1-t)A_0+t
A_1$$ has a single negative eigenvalue for all $t \in [0,s]$, $A_t$
is invertible for all $t \in (0,s)$, and $A_s$ is singular---that
is, $\Null(A_s)$ is non-trivial. (Actually, $A_s$ may be nonsingular
when $s$ equals 1, but this is a small detail.) Indeed, we define $s$
formally as follows. Let $\T := \{ t \in \R : A_t \text{ is singular}
\}$. Then
\begin{equation} \label{equ:def:s}
  s :=
  \left\{
    \begin{array}{ll}
      \min( \T \cap (0,1] ) & \text{under Condition \ref{ass:A0_apex}(i) or \ref{ass:A0_apex}(ii)} \\
      0 & \text{under Condition \ref{ass:A0_apex}(iii).}
    \end{array}
  \right.
\end{equation}
Sections \ref{sec:compdetails} and \ref{sec:proof} will clarify the role
of Condition \ref{ass:A0_apex} in this definition.

With $s$ given by (\ref{equ:def:s}), we can then define, for all $A_t$
with $t \in [0,s]$, SOCr sets $\F_t = \F_t^+ \cup \F_t^-$ as described
in Section \ref{sec:socr}. Furthermore, for $\bar x$ of Condition
\ref{ass:interior}, noting that $\bar x^T A_t \bar x = (1-t) \, \bar x
A_0 \bar x + t \, \bar x^T A_1 \bar x < 0$ by (\ref{equ:neginnprod}),
we can choose without loss of generality that $\bar x \in \F_t^+$ for
all such $t$. Then Theorem \ref{the:main} asserts that $\ccnh(\F_0^+
\cap \F_1)$ is contained in $\F_0^+ \cap \F_s^+$. We remark that while
$\F_0^+ \cap \F_1\subseteq \F_0^+ \cap \F_s$ (no ``$+$'' superscript on
$\F_s$) follows trivially from the definition of $\F_s$, strengthening
the inclusion to $\F_0^+ \cap \F_1\subseteq \F_0^+ \cap \F_s^+$ (with
the ``$+$'' superscript) is nontrivial.

The second part of Theorem \ref{the:main} provides an additional
condition under which $\F_0^+ \cap \F_s^+$ actually equals the closed
conic hull. The required condition is:

\begin{condition} \label{ass:As_null}
When $s < 1$, $\apex(\F_s^+) \cap \myint(\F_1) \ne \emptyset$.
\end{condition}

\noindent While Condition \ref{ass:As_null} may
appear quite strong, we will actually show (see Lemma
\ref{lem:As_null} in Section \ref{sec:proof}) that Conditions
\ref{ass:one_neg_eval}--\ref{ass:A0_apex} and the definition of
$s$ already ensure $\apex(\F_s^+) \subseteq \F_1$. So Condition
\ref{ass:As_null} is a type of regularity condition guaranteeing that
the set $\apex(\F_s^+) = \Null(A_s)$ is not restricted to the boundary
of $\F_1$.

We also include in Theorem \ref{the:main} a specialization for the case
when $\F_0^+ \cap \F_1$ is intersected with an affine hyperplane $H^1$,
which has been expressed as $\K \cap \Q \cap H$ in the Introduction. For
this, let $h \in \R^n$ be given, and define the hyperplanes
\begin{align}
  H^1 &:= \{ x : h^T x = 1 \}, \label{equ:def:H1} \\
  H^0 &:= \{ x : h^T x = 0 \}. \label{equ:def:H0}
\end{align}
We introduce an additional condition related to $H^0$:

\begin{condition} \label{ass:hyperplane}
When $s < 1$, $\apex(\F_s^+)\cap \myint(\F_1) \cap H^0 \ne \emptyset$
or $\F_0^+ \cap \F_s^+ \cap H^0 \subseteq \F_1$.
\end{condition}

We now state the main theorem of the paper. See Section \ref{sec:proof}
for its proof.

\begin{theorem} \label{the:main}
Suppose Conditions \ref{ass:one_neg_eval}--\ref{ass:A0_apex}
are satisfied, and let $s$ be defined by (\ref{equ:def:s}). Then
$\ccnh(\F_0^+ \cap \F_1) \subseteq \F_0^+ \cap \F_s^+$, and
equality holds under Condition \ref{ass:As_null}. Moreover, 
Conditions
\ref{ass:one_neg_eval}--\ref{ass:hyperplane} imply
$
  \F_0^+ \cap \F_s^+ \cap H^1 = \ccvh(\F_0^+ \cap \F_1 \cap H^1).
$
\end{theorem}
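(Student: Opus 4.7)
The plan is to prove the three assertions of Theorem \ref{the:main} in sequence, exploiting throughout the one-parameter family $A_t:=(1-t)A_0+tA_1$ and the pair of SOCr cones $\{\F_t^+,\F_t^-\}$ that it generates on $[0,s]$. For the containment $\ccnh(\F_0^+\cap\F_1)\subseteq\F_0^+\cap\F_s^+$, I first invoke the structural result forthcoming in Proposition \ref{pro:spos} stating that on $[0,s]$ each $A_t$ has exactly one negative eigenvalue and that the labels of $\{\F_t^+,\F_t^-\}$ can be chosen continuously so that $\bar x\in\F_t^+$; concretely this amounts to choosing the axis vector $b_t$ continuously with $b_t^T\bar x>0$. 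For any $x\in\F_0^+\cap\F_1\setminus\{0\}$, linearity of $t\mapsto x^TA_tx$ together with the two endpoint inequalities forces $x^TA_tx\le0$ on $[0,1]$, and hence $x\in\F_t^+\cup\F_t^-$ on $[0,s]$. Now $x\in\F_t^+$ is equivalent to $b_t^Tx\ge0$; if the continuous function $t\mapsto b_t^Tx$ changed sign at some $t^*\in(0,s]$, then $x\in\F_{t^*}$ together with $b_{t^*}^Tx=0$ would force $B_{t^*}^Tx=0$ and thus $x\in\apex(\F_{t^*})=\Null(A_{t^*})$ by Proposition \ref{pro:basicprops}. Since $A_t$ is nonsingular on $(0,s)$ this rules out $t^*\in(0,s)$, leaving only $t^*=s$, and then $x\in\apex(\F_s^+)\subseteq\F_s^+$. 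The edge case $b_0^Tx=0$ (i.e.\ $x\in\apex(\F_0^+)$) is dispatched case-by-case using Condition \ref{ass:A0_apex}.

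For the equality under Condition \ref{ass:As_null}, I fix $y\in\F_0^+\cap\F_s^+$ and produce a conic decomposition via a direction $d\in\apex(\F_s^+)\cap\myint(\F_1)$. The identity $A_s=(1-s)A_0+sA_1$ with $A_sd=0$ and $s<1$ yields $d^TA_0d=-\tfrac{s}{1-s}d^TA_1d>0$, and since $A_sd=0$ the expansion of $(y+td)^TA_s(y+td)=y^TA_sy\le0$ gives, for every $t\in\R$,
\[
  (1-s)(y+td)^TA_0(y+td)+s(y+td)^TA_1(y+td)\le 0.
\]
In particular, whenever $(y+td)^TA_0(y+td)=0$ the $A_1$-form is automatically $\le0$. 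When $y\in\myint(\F_0^+)$ the upward-opening quadratic $g(t):=(y+td)^TA_0(y+td)$ has real roots $t_a<0<t_b$; at both roots $y+t_\cdot d$ lies in $\bd(\F_0^+)\cap\F_1\subseteq\F_0^+\cap\F_1$, and the convex combination with weights $\tfrac{t_b}{t_b-t_a}$ and $\tfrac{-t_a}{t_b-t_a}$ reproduces $y$. The boundary case $y\in\bd(\F_0^+)$ is disposed of directly: the displayed inequality at $t=0$ with $y^TA_0y=0$ and $s>0$ forces $y\in\F_1$, while the degenerate $s=0$ sub-case (Condition \ref{ass:A0_apex}(iii)) admits the same ``$t_a<0<t_b$'' decomposition using the concavity in $t$ of $h(t):=(y+td)^TA_1(y+td)$.

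For the hyperplane identity $\F_0^+\cap\F_s^+\cap H^1=\ccvh(\F_0^+\cap\F_1\cap H^1)$, the inclusion ``$\supseteq$'' is immediate from the first part and convexity of $\F_0^+\cap\F_s^+\cap H^1$, and the reverse inclusion repeats the apex-direction construction inside $H^1$. Under the first alternative of Condition \ref{ass:hyperplane} one selects $d\in\apex(\F_s^+)\cap\myint(\F_1)\cap H^0$, so that $y+td\in H^1$ for every $t$ and the same $t_a,t_b$ argument goes through. Under the second alternative $\F_0^+\cap\F_s^+\cap H^0\subseteq\F_1$, recession directions of the closed convex set $\F_0^+\cap\F_s^+\cap H^1$ automatically lie in $\F_1$, so any such $y$ admits a standard decomposition into a point of a compact base and a recession part, each lying in $\ccvh(\F_0^+\cap\F_1\cap H^1)$. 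The main obstacle throughout will be the careful branch-tracking in Paragraph~1 (ruling out ``jumps'' of $x$ between $\F_t^+$ and $\F_t^-$ and isolating the $\apex$ contingencies allowed by Condition \ref{ass:A0_apex}) together with the verification in Paragraph~2 that the boundary points $y+t_ad,\,y+t_bd$ land in $\F_0^+$ rather than $\F_0^-$; both difficulties rely on the apex identity $\apex(\F^+)=\Null(A)$ from Proposition \ref{pro:basicprops}.
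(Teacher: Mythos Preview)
Your treatment of the first containment (Paragraph~1) and of the hyperplane case under the first alternative of Condition~\ref{ass:hyperplane} follows the paper's line: continuity of $b_t$, the apex identity $\apex(\F_t^+)=\Null(A_t)$, and nonsingularity of $A_t$ on $(0,s)$ are exactly the ingredients of Proposition~\ref{pro:containment} and Lemma~\ref{lem:halfspace1}.

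Your argument for the reverse containment is a genuine and valid variant of Proposition~\ref{pro:clconichull}. The paper moves along the apex direction $d\in\Null(A_s)$ until it hits $\bd(\F_1)$ (roots of $t\mapsto(y+td)^TA_1(y+td)$), then has to argue the endpoints lie in $\F_0^+$ rather than $\F_0^-$; it does this via the mirror containment $\F_0^-\cap\F_1\subseteq\F_0^-\cap\F_s^-$, forcing a putative $\F_0^-$ endpoint into $\Null(A_s)$ and contradicting $y^TA_1y>0$ through Lemma~\ref{lem:As_null}. You instead move until you hit $\bd(\F_0)$ (roots $t_a<0<t_b$ of $g(t)=(y+td)^TA_0(y+td)$) and read off membership in $\F_1$ from the invariant $A_s$-inequality. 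Your acknowledged obstacle, that $y+t_ad,\,y+t_bd$ land in $\F_0^+$ and not $\F_0^-$, is indeed fillable, but the apex identity alone is not the right hint: the clean argument is that $g(t)<0$ on $(t_a,t_b)$ forces $y+td\in\myint(\F_0^+)\cup\myint(\F_0^-)$, two disjoint open sets, so by connectedness the path stays in $\myint(\F_0^+)$ and the endpoints lie in its closure $\F_0^+$. Your route buys a slightly shorter proof (no mirror containment needed); the paper's route has the advantage that the same $A_1$-root construction is reused verbatim for the $H^+$ lemma.

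There is a real gap in your handling of the second alternative of Condition~\ref{ass:hyperplane}. The phrase ``standard decomposition into a point of a compact base and a recession part'' suggests a Minkowski/Klee-type representation of $\F_0^+\cap\F_s^+\cap H^1$, but that would require its extreme points to lie in $\F_0^+\cap\F_1\cap H^1$, which you have not shown and which is not obvious (the apex direction $d$ need not lie in $H^0$, so your Paragraph~2 decomposition does not stay inside $H^1$). What actually works, and what the paper does in Lemma~\ref{lem:cross-section} and Lemma~\ref{lem:coneCross-section}, is more specific: write $y$ as a conic combination of points $x^k\in\F_0^+\cap\F_1$ using the already-established conic-hull equality; for each $x^k$ with $h^Tx^k<0$, replace it by the point where the segment $[y,x^k]\subseteq\F_0^+\cap\F_s^+$ meets $H^0$, which lies in $\F_1$ by the second alternative; then rescale the summands with $h^Tx^k>0$ onto $H^1$ and absorb the $H^0$ summands as recession directions of $\F_0^+\cap\F_1\cap H^1$. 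You should replace the ``compact base'' sentence with this explicit construction.
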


\subsection{Computational details} \label{sec:compdetails}

In practice, Theorem \ref{the:main} can be used to generate a valid
convex relaxation $\F_0^+ \cap \F_s^+$ of the nonconvex cone $\F_0^+
\cap \F_1$. For the purposes of computation, we assume that $\F_0^+ \cap
\F_1$ is described as
\[
    \F_0^+ \cap \F_1 = \{ x \in \R^n : \| B_0^T x \| \le b_0^T x, \ x^T A_1 x \le 0 \},
\]
where $B_0$ is nonzero, $0 \ne b_0 \not\in \Range(B_0)$, and $A_0 = B_0
B_0^T - b_0 b_0^T$ in accordance with (\ref{equ:def:Bb}). In particular,
$\F_0^+$ is given in its direct SOC form. Our goal is to calculate
$\F_s^+$ in terms of its SOC form $\| B_s^T x \| \le b_s^T x$, to which
we will refer as the SOC cut.

Before one can apply Theorem \ref{the:main} to generate the cut,
Conditions 1--3 must be verified. By construction, Condition
\ref{ass:one_neg_eval} is satisfied, and verifying Condition
\ref{ass:A0_apex}(i) is easy. Conditions \ref{ass:A0_apex}(ii) and
\ref{ass:A0_apex}(iii) are also easy to verify by computing the
eigenvalues of $Z_0^T A_1 Z_0$, where $Z_0$ is a matrix whose columns
span $\Null(A_0)$. Due to (\ref{equ:neginnprod}) and the fact that
$\F_0$ and $\F_1$ are cones, verifying Condition \ref{ass:interior}
is equivalent to checking the feasibility of the following quadratic
equations in the original variables $x\in\R^n$ and the auxiliary ``squared
slack'' variables $s, t \in \R$:
\[
x^TA_0 x + s^2= -1, \ x^TA_1 x + t^2= -1 .
\]
Let us define the underlying symmetric $(n+2)\times(n+2)$ matrices for
these quadratics as $\hat{A}_0$ and $\hat{A}_1$. Since there are only
two quadratic equations with symmetric matrices, by \cite[Corollary
13.2]{Barvinok}, checking Condition \ref{ass:interior} is equivalent
to checking the feasibility of the following linear semidefinite
system, which can be done easily in practice: \begin{equation}
\label{equ:Barvinok} Y\succeq 0, ~\tr(\hat{A}_0 Y) =-1, ~ \tr(\hat{A}_1
Y) =-1. \end{equation} See also \cite{Pataki98} for a similar result.

This equivalence of Condition \ref{ass:interior} and the feasibility of
system (\ref{equ:Barvinok}) relies on the fact that every extreme point
of (\ref{equ:Barvinok}) is a rank-1 matrix, and such extreme points
can be calculated in polynomial time \cite{Pataki98}. Extreme points
can also be generated reliably (albeit heuristically) in practice to
calculate an interior point $\bar x \in \myint(\F_0^+ \cap \F_1)$. One
can simply minimize over (\ref{equ:Barvinok}) the objective $\tr((I +
R)Y)$, where $I$ is the identity matrix and $R$ is a random matrix,
small enough so that $I + R$ remains positive definite. The objective
$\tr((I + R)Y)$ is bounded over (\ref{equ:Barvinok}), and hence an
optimal solution occurs at an extreme point. The random nature of the
objective also makes it highly likely that the optimal solution is
unique, in which case the optimal $Y^*$ must be rank-1. Then $\bar x$
can easily be extracted from the rank-1 factorization of $Y^*$. Note
that in certain specific cases $\bar{x}$ might be known ahead of time or
could be computed right away by some other means.

Once Conditions \ref{ass:one_neg_eval}--\ref{ass:A0_apex} have
been verified, we are then ready to calculate $s$ according to its
definition (\ref{equ:def:s}). If Condition \ref{ass:A0_apex}(iii)
holds, we simply set $s = 0$. For Conditions \ref{ass:A0_apex}(i) and
\ref{ass:A0_apex}(ii), we need to calculate $\T$, the set of scalars
$t$ such that $A_t := (1 - t) A_0 + t A_1$ is singular. Let us first
consider Condition \ref{ass:A0_apex}(i), which is the simpler case.
The following calculation with $t \ne 0$ shows that the elements
of $\T$ are in bijective correspondence with the real eigenvalues of
$A_0^{-1} A_1$:
\begin{align*}
  A_t \text{ is singular}
  \ \ \ &\Longleftrightarrow \ \ \ 
  \exists \ x \ne 0 \ \st \ A_t x = 0 \\
  \ \ \ &\Longleftrightarrow \ \ \ 
  \exists \ x \ne 0 \ \st \ A_0^{-1} A_1 x = - \left(\tfrac{1-t}{t}\right) x \\
  \ \ \ &\Longleftrightarrow \ \ \ 
  -\left(\tfrac{1-t}{t}\right) \text{ is an eigenvalue of } A_0^{-1} A_1.
\end{align*}
So to calculate $\T$, we calculate the real eigenvalues $\E$ of
$A_0^{-1} A_1$, and then calculate $\T = \{ (1-e)^{-1} : e \in \E \}$,
where by convention $0^{-1} = \infty$. In particular, $|\T|$ is finite.

When Condition \ref{ass:A0_apex}(ii) holds, we calculate $\T$ in a
slightly different manner. We will show in Section \ref{sec:proof} (see
Lemma \ref{lem:Ae} in particular) that, even though $A_0$ is singular,
$A_\epsilon$ is nonsingular for all $\epsilon > 0$ sufficiently small.
Such an $A_\epsilon$ could be calculated by systematically testing
values of $\epsilon$ near 0, for example. Then we can apply the
procedure of the previous paragraph to calculate the set $\overline
\T$ of all $\bar t$ such that $(1 - \bar t)A_\epsilon + \bar t A_1$ is
singular. Then one can check that $\T$ is calculated by the following
affine transformation: $\T = \{ (1 - \epsilon) \bar t + \epsilon : \bar t
\in \overline \T \}$.

Once $\T$ is computed, we can easily calculate $s = \min(\T \cap (0,1])$
according to (\ref{equ:def:s}), and then we construct $A_s := (1 - s)A_0
+ s A_1$ and calculate $(B_s, b_s)$ according to (\ref{equ:def:Bb}).
Then our cut is $\| B_s^T x \| \le b_s^T x$ with only one final
provision. We must check the sign of $b_s^T \bar x$, where $\bar x \in
\myint(\F_0^+ \cap \F_1)$ has been calculated previously. If $b_s^T \bar
x \ge 0$, then the cut is as stated; if $b_s^T \bar x < 0$, then the cut
is as stated but $b_s$ is first replaced by $-b_s$.

We summarize the preceding discussion by the pseudocode in Algorithm
\ref{alg:cut}. While this algorithm is quite general, it is also
important to point out that it can be streamlined if one already knows
the structure of $\| B_0^T x \| \le b_0^T x$ and $x^T A_1 x \le 0$. For
example, one may already know that $A_0$ is invertible, in which case it
would be unnecessary to calculate the spectral decomposition of $A_0$
in Algorithm \ref{alg:cut}. 
In addition, for many of the specific cases
that we consider in Sections \ref{sec:twoterm} and \ref{sec:general},
we can explicitly point out the corresponding value of $s$ without
even relying on the computation of the set $\T$. Because of space
considerations, we do not include these closed-form expressions for $s$ and the corresponding computations.

\begin{algorithm}
\caption{Calculate Cut (see also Section \ref{sec:compdetails})} \label{alg:cut}
\begin{algorithmic}[1]
    \REQUIRE Inequalities $\|B_0^T x\| \le b_0^T x$ and $x^T A_1 x \le 0$.
    \ENSURE Valid cut $\| B_s^T x \| \le b_s^T x$.
    \STATE Calculate $A_0 = B_0 B_0^T - b_0 b_0^T$ and a spectral
    decomposition $Q_0 \Diag(\lambda_0) Q_0^T$. Let $Z_0$ be the submatrix
    of $Q_0$ of zero eigenvectors (possibly empty).
    \STATE Minimize $\tr((I+R)Y$ over (\ref{equ:Barvinok}). If
    infeasible, then STOP. Otherwise, extract $\bar x \in \myint(\F_0^+
    \cap \F^1)$ from $Y^*$.
    \IF{$Z_0$ is empty}
        \STATE Calculate the set ${\cal E}$ of real eigenvalues
        of $A_0^{-1} A_1$.
        \STATE Set ${\cal T} = \{ (1-e)^{-1} : e \in {\cal E} \}$.
        \STATE Set $s = \min(\T \cap (0,1])$.
    \ELSIF{$Z_0^T A_1 Z_0 \succ 0$}
        \STATE Determine $\epsilon > 0$ small such that $A_\epsilon =
        (1 - \epsilon)A_0 + \epsilon A_1$ is invertible.
        \STATE Calculate the set $\overline{{\cal E}}$ of real eigenvalues
        of $A_\epsilon^{-1} A_1$.
        \STATE Set $\overline{\T} = \{ (1-\bar e)^{-1} :
        \bar e \in \overline{\cal E} \}$.
        \STATE Set $\T = \{ (1 - \epsilon)\bar t + \epsilon : \bar t \in
        \overline{\cal T} \}$.
        \STATE Set $s = \min(\T \cap (0,1])$.
    \ELSIF{$Z_0^T A_1 Z_0 \prec 0$}
        \STATE Set $s = 0$.
    \ELSE
        \STATE STOP.
    \ENDIF
    \STATE Calculate $A_s = B_s B_s^T - b_s b_s^T$ and a spectral
    decomposition $Q_s \Diag(\lambda_s) Q_s^T$. Let $(B_s,b_s)$ be given
    by (\ref{equ:def:Bb}).
    \STATE If $b_s^T \bar x < 0$, replace $b_s$ by $-b_s$.
\end{algorithmic}
\end{algorithm}

Finally, we mention briefly the computability of Conditions
\ref{ass:As_null} and \ref{ass:hyperplane}, which are not necessary for
the validity of the cut but can establish its sufficiency. Given $s <
1$, Condition \ref{ass:As_null} can be checked by computing $Z_s^T A_1
Z_s$, where $Z_s$ has columns spanning $\Null(A_s)$. We know $Z_s^T
A_1 Z_s \preceq 0$ because $\apex(\F_s^+) \subseteq \F_1$ (see Lemma
\ref{lem:As_null} in Section \ref{sec:proof}), and then Condition
\ref{ass:As_null} holds as long as $Z_s^T A_1 Z_s \ne 0$. On the other
hand, it seems challenging to verify Condition \ref{ass:hyperplane} in
general. However, in Sections \ref{sec:twoterm} and \ref{sec:general},
we will show that it can be verified in many examples of interest.


\subsection{An ellipsoid and a nonconvex quadratic}\label{sec:sub:ExEllipsoid}

In $\R^3$, consider the intersection of the unit ball defined by
$y_1^2 + y_2^2 + y_3^2 \le 1$ and the nonconvex set defined by
the quadratic $-y_1^2 - y_2^2 + \tfrac12 y_3^2 \le y_1 + \tfrac12 y_2$.
By homogenizing via $x = {y \choose
x_4}$ with $x_4 = 1$, we can represent the intersection as $\F_0^+ \cap
\F_1 \cap H^1$ with
\[
  A_0 :=
  \begin{pmatrix}
    1 & 0 & 0 & 0         \\
    0 & 1 & 0 & 0         \\
    0 & 0 & 1 & 0 \\
    0 & 0 & 0 & -1
  \end{pmatrix}, \ \ \
  A_1 :=
  \begin{pmatrix}
    -1        &  0         & 0        & -\tfrac12 \\
    0         & -1         & 0        & -\tfrac14 \\
    0         &  0         & \tfrac12 & 0 \\
    -\tfrac12 &  -\tfrac14 & 0        & 0 
  \end{pmatrix}, \ \ \
  H^1 := \{ x : x_4 = 1 \}.
\]
Conditions \ref{ass:one_neg_eval} and \ref{ass:A0_apex}(i) are
straightforward to verify, and Condition \ref{ass:interior} is
satisfied with $\bar x = (\tfrac12;0;0;1)$, for
example. We can also calculate $s = \tfrac12$ from (\ref{equ:def:s}). Then
\[
  A_s =
  \tfrac18
  \begin{pmatrix}
    0  &  0  & 0 & -2 \\
    0  &  0  & 0 & -1 \\
    0  &  0  & 6 & 0 \\
    -2 &  -1 & 0 & -4
  \end{pmatrix}, \ \ \ \ \
  \F_s = \left\{ x : 3 x_3^2 \le 2 x_1 x_4 + x_2 x_4 + 2 x_4^2 \right\}.
\]
The negative eigenvalue of $A_s$ is $\lambda_{s1} := -\tfrac58$
with corresponding eigenvector $q_{s1} := (2;1;0;5)$, and so,
in accordance with the Section \ref{sec:socr}, we have that $\F_s^+$
equals all $x \in \F_s$ satisfying $b_s^T x \ge 0$, where
\[
  b_s := (-\lambda_{s1})^{1/2} q_{s1}
  = \sqrt{5/8}
  \begin{pmatrix}
    2 \\ 1 \\ 0 \\ 5
  \end{pmatrix}.
\]
In other words,
\[
  \F_s^+ := \left\{ x : \begin{array}{ll}
      3 x_3^2 \le 2 x_1 x_4 + x_2 x_4 + 2 x_4^2 \\
      2 x_1 + x_2 + 5 x_4 \ge 0
    \end{array}
  \right\}.
\]
Note that $\bar x \in \F_s^+$. In addition, $\apex(\F_s^+) = \Null(A_s)
= \myspan\{d\}$, where $d = (1;-2;0;0)$. Clearly, $d \in H^0$ and
$d^T A_1 d < 0$, which verifies Conditions \ref{ass:As_null} and
\ref{ass:hyperplane} simultaneously. Setting $x_4 = 1$ and returning to
the original variables $y$, we see
\[
  \left\{ y : 
  \begin{array}{l}
    y_1^2 + y_2^2 + y_3^2 \le 1 \\
    3 y_3^2 \le 2 y_1 + y_2 + 2 
  \end{array}
  \right\}
  =
  \ccvh\left\{ y :
  \begin{array}{l}
    y_1^2 + y_2^2 + y_3^2 \le 1 \\
    -y_1^2 - y_2^2 + \tfrac12 y_3^2 \le y_1 + \tfrac12 y_2
  \end{array}
  \right\},
\]
where the now redundant constraint $2y_1 + y_2 \ge -5$ has been dropped.
Figure \ref{fig:sphere_quad} depicts the original set, $\F_s^+\cap H^1$, and the closed convex
hull.

\begin{figure}[htp]
\centering
  \subfigure[$\F_0^+ \cap \F_1 \cap H^1$]{%
    \label{sphere_quad_F0F1}%
    \includegraphics[width=0.30\textwidth]{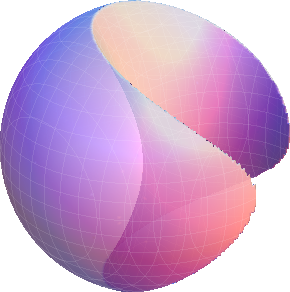}%
  } 
  \subfigure[$\F_s^+ \cap H^1$]{%
    \label{sphere_quad_Fs}%
    \includegraphics[width=0.36\textwidth]{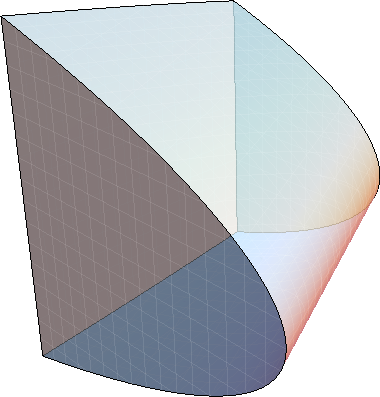}%
  }
  \subfigure[$\F_0^+ \cap \F_s^+ \cap H^1$]{%
    \label{sphere_quad_F0Fs}%
    \includegraphics[width=0.30\textwidth]{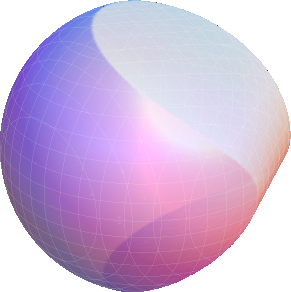}%
  }
\caption{An ellipsoid and a nonconvex quadratic}
\label{fig:sphere_quad}
\end{figure}

Of the earlier, related approaches, this example
can be handled by \cite{MKV} only. In particular,
\cite{AJ2013,BGPRT,DDV2011,K-KY14,KY14,YC14} cannot handle this example
because they deal with only split or two-term disjunctions but cannot
cover general nonconvex quadratics. The approach of \cite{BM} is based
on eliminating a convex region from a convex epigraphical set, but this
example removes a nonconvex region (specifically, $\R^n\setminus\F_1$).
So \cite{BM} cannot handle this example either.

In actuality, the results of \cite{MKV} do not handle this example
explicitly since the authors only state results for: the removal of a
paraboloid or an ellipsoid from a paraboloid; or the removal of an ellipsoid
(or an ellipsoidal cylinder) from another ellipsoid with a common
center. However, in this particular example, the function obtained from
the aggregation technique described in \cite{MKV} is convex on all of
$\R^3$. Therefore, their global convexity requirement on the aggregated
function is satisfied for this example.


\section{The Proof} \label{sec:proof}

In this section, we build the proof of Theorem \ref{the:main}, and we
provide important insights along the way. The key results are
Propositions \ref{pro:containment}--\ref{pro:H1}, which state
\begin{align*}
&\F_0^+ \cap \F_1 \subseteq
\F_0^+ \cap \F_s^+ \subseteq
\cnh(\F_0^+ \cap \F_1) \\
&\F_0^+ \cap \F_1 \cap H^1   \subseteq
\F_0^+ \cap \F_s^+ \cap H^1 \subseteq
\cvh(\F_0^+ \cap \F_1 \cap H^1),
\end{align*}
where $s$ is given by (\ref{equ:def:s}).
In each line here, the first containment depends only on Conditions
\ref{ass:one_neg_eval}--\ref{ass:A0_apex}, which proves the first
part of Theorem \ref{the:main}. On the other hand, the second
containments require Condition \ref{ass:As_null} and Conditions
\ref{ass:As_null}--\ref{ass:hyperplane}, respectively. Then the second
part of Theorem \ref{the:main} follows by simply taking the closed conic
hull and the closed convex hull, respectively, and noting that $\F_0^+
\cap \F_s^+$ and $\F_0^+ \cap \F_s^+ \cap H^1$ are already closed and
convex.

\subsection{The interval $[0,s]$} \label{sec:proof:interval}

Our next result, Lemma \ref{lem:Ae}, is quite technical but critically
important. For example, it establishes that the line of matrices $\{
A_t \}$ contains at least one invertible matrix not equal to $A_1$.
As discussed in Section \ref{sec:result}, this proves that the set
$\T$ used in the definition (\ref{equ:def:s}) of $s$ is finite and
easily computable. The lemma also provides additional insight into
the definition of $s$. Specifically, the lemma clarifies the role of
Condition \ref{ass:A0_apex} in (\ref{equ:def:s}).

\begin{lemma} \label{lem:Ae}
For $\epsilon > 0$ small, consider $A_\epsilon$ and
$A_{-\epsilon}$. Relative to Condition \ref{ass:A0_apex}:
\begin{itemize}

  \item if (i) holds, then $A_\epsilon$ and
  $A_{-\epsilon}$ are each invertible with one negative
  eigenvalue;
 
  \item if (ii) holds, then only $A_\epsilon$ is invertible with 
  one negative eigenvalue;

  \item if (iii) holds, then only $A_{-\epsilon}$ is invertible with
  one negative eigenvalue.

\end{itemize}
\end{lemma}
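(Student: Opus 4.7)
My plan is to analyze the eigenvalues of $A_{\pm\epsilon} = A_0 \pm \epsilon(A_1 - A_0)$ as $\epsilon\to 0^+$, splitting along the three subcases of Condition~\ref{ass:A0_apex} in order of increasing difficulty.

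For case~(i), $A_0$ is nonsingular and, by Condition~\ref{ass:one_neg_eval}, has exactly one negative eigenvalue. Weyl's inequality gives $|\lambda_j(A_{\pm\epsilon}) - \lambda_j(A_0)| \le \epsilon\,\|A_1 - A_0\|$ for every $j$, so for $\epsilon$ sufficiently small no eigenvalue can cross zero, and both $A_\epsilon$ and $A_{-\epsilon}$ inherit the signature of $A_0$. This immediately gives invertibility and exactly one negative eigenvalue.

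For cases~(ii) and~(iii), set $k := \dim\Null(A_0) \ge 1$, let $Z_0 \in \R^{n\times k}$ be an orthonormal basis of $\Null(A_0)$, and let $W_0 \in \R^{n\times(n-k)}$ be an orthonormal basis of the orthogonal complement. In the basis $[W_0\ Z_0]$, the matrix $A_0$ is block-diagonal with an invertible top block $D_0$ having exactly one negative eigenvalue (and $n-k-1$ positive eigenvalues), and a zero bottom block. Writing
\[
  [W_0\ Z_0]^T A_{\pm\epsilon} [W_0\ Z_0] =
  \begin{pmatrix}
    D_0 \pm \epsilon M_{11} & \pm \epsilon M_{12} \\[2pt]
    \pm \epsilon M_{12}^T   & \pm \epsilon M_{22}
  \end{pmatrix},
\]
we use $A_0 Z_0 = 0$ to identify $M_{22} = Z_0^T(A_1 - A_0)Z_0 = Z_0^T A_1 Z_0$. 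For $\epsilon$ small the top block stays invertible, and its Schur complement is $\pm\epsilon M_{22} - \epsilon^2 M_{12}^T(D_0 \pm \epsilon M_{11})^{-1} M_{12} = \pm\epsilon M_{22} + O(\epsilon^2)$. In case~(ii), $M_{22} \succ 0$, so for small $\epsilon>0$ the Schur complement is $\succ 0$ for $A_\epsilon$ and $\prec 0$ for $A_{-\epsilon}$. By Sylvester's law of inertia, $A_\epsilon$ then has inertia $(n-1,0,1)$, that is, invertible with exactly one negative eigenvalue, while $A_{-\epsilon}$ has inertia $(n-k-1,0,k+1)$ with $k+1\ge 2$ negative eigenvalues. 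Case~(iii) is identical with the roles of $\epsilon$ and $-\epsilon$ swapped.

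The main obstacle is justifying that the $O(\epsilon^2)$ remainder in the Schur complement cannot flip its sign; this follows because $M_{22}$ is definite, so its eigenvalues are uniformly bounded away from zero and the linear-in-$\epsilon$ term dominates. Everything else is a direct combination of Weyl's inequality, Sylvester's law, and the definiteness hypothesis in Condition~\ref{ass:A0_apex}.
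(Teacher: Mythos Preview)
Your proof is correct, but it proceeds quite differently from the paper's.

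For case~(i) the two arguments coincide: both invoke continuity of eigenvalues (the paper says this in words; you quantify it via Weyl's inequality).

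For case~(ii) the paper argues variationally: it takes $V$ to be the span of the nonnegative eigenvectors of $A_0$, defines $\theta := \inf\{x^T A_0 x : x^T(A_0-A_1)x = 1,\ x\in V\}$, shows $\theta>0$ by a compactness argument (ruling out both bounded and unbounded minimizing sequences using the definiteness of $A_1$ on $\Null(A_0)$), and then checks directly that $x^T A_\epsilon x > 0$ for every nonzero $x\in V$ once $\epsilon \le \theta/2$. This gives $n-1$ positive eigenvalues; Condition~\ref{ass:interior} supplies the remaining negative one. A parallel argument on the span $W$ of the nonpositive eigenvectors handles $A_{-\epsilon}$. Case~(iii) is reduced to case~(ii) by passing to the line through $A_0$ and $-A_1$.

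Your route via the Schur complement and Haynsworth's inertia additivity is cleaner and shorter: the block decomposition isolates $\pm\epsilon\, Z_0^T A_1 Z_0$ as the leading term of the Schur complement, and definiteness of that matrix immediately gives the full inertia without any appeal to Condition~\ref{ass:interior}. The paper's argument is more elementary in that it avoids the Schur-complement congruence, but it is longer and leans on Condition~\ref{ass:interior} where you do not need it. Both approaches treat case~(iii) by symmetry with case~(ii).
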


\noindent Since the proof of Lemma \ref{lem:Ae} is involved, we delay it
until the end of this subsection.

If Condition \ref{ass:A0_apex}(i) or \ref{ass:A0_apex}(ii)
holds, then Lemma \ref{lem:Ae} shows that the interval $(0,\epsilon)$
contains invertible $A_t$, each with exactly one negative eigenvalue,
and (\ref{equ:def:s}) takes $s$ to be the largest $\epsilon$ with
this property. By continuity, $A_s$ is singular (when $s < 1$) but still retains
exactly one negative eigenvalue, a necessary condition for defining
$\F_s^+$ in Theorem \ref{the:main}. On the other hand, if Condition
\ref{ass:A0_apex}(iii) holds, then $A_0$ is singular and no $\epsilon >
0$ has the property just mentioned. Yet, $s=0$ is still the natural
``right-hand limit'' of invertible $A_{-\epsilon}$, each with exactly
one negative eigenvalue. This will be all that is required for Theorem
\ref{the:main}.

With Lemma \ref{lem:Ae} in hand, we can prove the following key
result, which sets up the remainder of this section. The proof of
Lemma \ref{lem:Ae} follows afterwards.

\begin{proposition} \label{pro:spos}
Suppose Conditions \ref{ass:one_neg_eval}--\ref{ass:A0_apex} hold.
For all $t \in [0,s]$, $A_t$ has exactly one negative eigenvalue. In
addition, $A_t$ is nonsingular for all $t \in (0,s)$, and if $s < 1$,
then $A_s$ is singular.
\end{proposition}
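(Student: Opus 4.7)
The plan is to establish the proposition by continuity of the eigenvalues of the symmetric matrix family $A_t = (1-t) A_0 + t A_1$, combined with the fact that the inertia (numbers of positive, negative, and zero eigenvalues) is locally constant on any connected subinterval of parameters where $A_t$ remains nonsingular. The essential auxiliary input is Lemma \ref{lem:Ae}, which supplies a nonsingular ``starting point'' near $t=0^+$ whose negative-eigenvalue count is pinned down.

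First I would dispatch Condition \ref{ass:A0_apex}(iii) as a triviality: by (\ref{equ:def:s}) we have $s=0$, so $[0,s]=\{0\}$ and the claim reduces to Condition \ref{ass:one_neg_eval} applied to $A_0$, while $(0,s)$ is empty and $A_s=A_0$ is singular by Cond 3(iii) itself.

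The bulk of the work is under Conditions \ref{ass:A0_apex}(i) or \ref{ass:A0_apex}(ii). By Lemma \ref{lem:Ae} there is $\epsilon_0>0$ such that $A_t$ is invertible with exactly one negative eigenvalue throughout $(0,\epsilon_0)$; in particular $s\ge \epsilon_0>0$ and $(0,\epsilon_0)\subseteq (0,s)$. By the very definition $s=\min(\T\cap(0,1])$, no $A_t$ is singular on the open interval $(0,s)$. Invoking the standard fact that the ordered eigenvalues $\mu_1(t)\le\cdots\le\mu_n(t)$ of a continuous family of real symmetric matrices are themselves continuous in $t$, none of the $\mu_i$ can change sign on $(0,s)$, since changing sign requires passing through $0$ and hence a singular $A_t$. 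The ``starting point'' from Lemma \ref{lem:Ae} fixes the signs at one negative and $n-1$ positive eigenvalues, and this inertia therefore propagates to all of $(0,s)$. At $t=0$, Condition \ref{ass:one_neg_eval} supplies exactly one negative eigenvalue directly.

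The one remaining subtlety, and the main obstacle, is handling $t=s$ when $s<1$. Here $A_s$ is singular by definition, so a priori one might worry that the negative eigenvalue $\mu_1(t)$ itself vanishes in the limit, leaving $A_s$ positive semidefinite with zero negative eigenvalues. I would rule this out via Condition \ref{ass:interior}: since $\bar x^T A_0\bar x<0$ and $\bar x^T A_1\bar x<0$, the convex combination gives $\bar x^T A_s\bar x=(1-s)\bar x^T A_0\bar x+s\,\bar x^T A_1\bar x<0$, forcing $\mu_1(s)<0$ by the Rayleigh characterization. Combined with the continuity inequalities $\mu_i(s)=\lim_{t\uparrow s}\mu_i(t)\ge 0$ for $i\ge 2$ (each such $\mu_i(t)$ was strictly positive on $(0,s)$), we conclude that $A_s$ has exactly one negative eigenvalue as well. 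This Rayleigh-quotient argument at $t=s$ is precisely where Condition \ref{ass:interior}, beyond being merely an ``interior is nonempty'' statement, is doing real work; everything else is bookkeeping around Lemma \ref{lem:Ae} and eigenvalue continuity.
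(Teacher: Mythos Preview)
Your proof is correct and uses essentially the same ingredients as the paper's: Lemma~\ref{lem:Ae} to fix the inertia near $t=0^+$, the definition of $s$ to guarantee nonsingularity on $(0,s)$, eigenvalue continuity to propagate the inertia across that interval, and Condition~\ref{ass:interior} (via $\bar x^T A_s \bar x<0$) to secure the negative eigenvalue at the endpoint. The paper packages the ``at most one negative eigenvalue'' part as a proof by contradiction (assume two, force a singular $A_r$ with $r\in(0,s)$, contradict), whereas you argue the inertia propagation directly---but these are the forward and backward readings of the same continuity argument.
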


\begin{proof}
Condition \ref{ass:interior} implies (\ref{equ:neginnprod}), and so
$\bar x^T A_t \bar x = (1-t)\, \bar x^T A_0 \bar x + t \, \bar x^T A_1
\bar x < 0$ for every $t$. So each $A_t$ has at least one negative
eigenvalue. Also, the definition of $s$ ensures that all $A_t$ for $t
\in (0,s)$ are nonsingular and that $A_s$ is singular when $s < 1$.

Suppose that some $A_t$ with $t \in [0,s]$ has two negative
eigenvalues. Then by Condition \ref{ass:one_neg_eval} and the
facts that the entries of $A_t$ are affine functions of $t$ and the
eigenvalues depend continuously on the matrix entries \cite[Section
2.4.9]{Horn_Johnson_2013}, there exists some $0 \le r < t \le s$ with
at least one zero eigenvalue, i.e., with $A_r$ singular. From the
definition of $s$, we deduce that $r = 0$ and $A_\epsilon$
has two negative eigenvalues for $\epsilon > 0$ small. Then Condition
\ref{ass:A0_apex}(ii) holds since $s > 0$. However, we then encounter a
contradiction with Lemma \ref{lem:Ae}, which states that $A_\epsilon$
has exactly one negative eigenvalue. \qed
\end{proof}

\begin{proof}[of Lemma \ref{lem:Ae}]
The lemma holds under Condition \ref{ass:A0_apex}(i) since
$A_0$ is invertible with exactly one negative eigenvalue and  the
eigenvalues are continuous in $\epsilon$.

Suppose Condition \ref{ass:A0_apex}(ii) holds. Let $V$ be the subspace
spanned by the zero and positive eigenvectors of $A_0$, and consider
\[
    \theta := \inf \{ x^T A_0 x : x^T (A_0 - A_1) x = 1, x \in V \}.
\]
Clearly $\theta \ge 0$, and we claim $\theta > 0$. If $\theta = 0$,
then there exists $\{ x^k \} \subseteq V$ with $(x^k)^T A_0 x^k \to 0$
and $(x^k)^T(A_0 - A_1) x^k = 1$ for all $k$.
If $\{x^k\}$ is bounded, then passing to a subsequence if necessary,
we have $x^k \to \hat x$ such that $\hat x^T A_0 \hat x = 0$ and $\hat
x^T(A_0 - A_1) \hat x = 1$, which implies $\hat x^T A_1 \hat x = -1$,
a contradiction of Condition \ref{ass:A0_apex}(ii). On the other hand,
if $\{x^k\}$ is unbounded, then the sequence $d^k := x^k/\|x^k\|$ is
bounded, and
passing to a subsequence if necessary, we see that $d^k \to \hat d$ with
$\|\hat d \| = 1$, $\hat d^T A_0 \hat d = 0$ and $\hat d^T (A_0 - A_1)
\hat d = 0$. This implies $\hat d^T A_1 \hat d = 0$, violating Condition
\ref{ass:A0_apex}(ii). So $\theta > 0$.

Now choose any $0 < \epsilon \le \theta/2$, and take any nonzero $x \in V$.
Note that
\begin{equation}\label{eq:AepsFormula}
    x^T A_\epsilon x = (1 - \epsilon)x^T A_0 x + \epsilon x^T A_1 x
    = x^T A_0 x - \epsilon x^T (A_0 - A_1) x.
\end{equation}
We wish to show $x^T A_\epsilon x > 0$, and so we consider three
subcases. First, if $x^T (A_0 - A_1) x = 0$, then it must hold that
$x^T A_0 x > 0$. If not, then $x^T A_1 x = 0$ also, violating Condition
\ref{ass:A0_apex}(ii). So $x^T A_\epsilon x = x^T A_0 x > 0$. Second, if
$x^T (A_0 - A_1) x < 0$, then because $x\in V$ we have $x^T A_\epsilon x
> 0$. Third, if $x^T (A_0 - A_1) x > 0$, then we may assume without loss
of generality by scaling that $x^T (A_0 - A_1) x = 1$ in which case $x^T
A_\epsilon x \ge \theta - \epsilon > 0$.

So we have shown that $A_\epsilon$ is positive definite on a subspace of
dimension $n-1$, which implies that $A_{\epsilon}$ has at least $n-1$
positive eigenvalues. In addition, we know that $A_{\epsilon}$ has at
least one negative eigenvalue because $\bar x^T A_\epsilon \bar x < 0$
according to Condition \ref{ass:interior} and (\ref{equ:neginnprod}).
Hence, $A_\epsilon$ is invertible with exactly one negative eigenvalue,
as claimed.

By repeating a very similar argument for vectors $x\in W$, the subspace
spanned by the negative and zero eigenvectors of $A_0$ (note that $W$ is
at least two-dimensional because Condition \ref{ass:A0_apex}(ii) holds),
and once again using the relation \eqref{eq:AepsFormula}, we can show
that $A_{-\epsilon}$ has at least two negative eigenvalues, as claimed.

Finally, suppose Condition \ref{ass:A0_apex}(iii) holds and define
\begin{align*}
\bar A_\epsilon &:= 
  \left( \tfrac{1}{1 + 2\epsilon} \right) A_{-\epsilon} =
  \left( \tfrac{1}{1 + 2\epsilon} \right) \left( (1+\epsilon) A_0 - \epsilon A_1 \right) =
  \left( \tfrac{1 + \epsilon}{1 + 2\epsilon} \right) A_0 +
  \left( \tfrac{\epsilon}{1 + 2\epsilon} \right) (-A_1) \\
\bar A_{-\epsilon} &:= 
  \left( \tfrac{1}{1 - 2\epsilon} \right) A_\epsilon =
  \left( \tfrac{1}{1 - 2\epsilon} \right) \left( (1-\epsilon) A_0 + \epsilon A_1 \right) =
  \left( \tfrac{1 - \epsilon}{1 - 2\epsilon} \right) A_0 +
  \left( \tfrac{-\epsilon}{1 - 2\epsilon} \right) (-A_1). 
\end{align*}
Then $\bar A_{\epsilon}$ and $\bar A_{-\epsilon}$ are on the line generated
by $A_0$ and $-A_1$ such that $-A_1$ is positive definite on the null
space of $A_0$. Applying the previous case for Condition \ref{ass:A0_apex}(ii), we
see that only $\bar A_{\epsilon}$ is invertible with a single negative
eigenvalue. This proves the result.
\qed \end{proof}

\subsection{The containment $\F_0^+ \cap \F_1 \subseteq \F_0^+ \cap \F_s^+$}

For each $t \in [0,s]$, Proposition \ref{pro:spos} allows us to
define analogs $\F_t = \F_t^+ \cup \F_t^-$ as described in Section
\ref{sec:socr} based on any spectral decomposition $A_t = Q_t
\Diag(\lambda_t) Q_t^T$.

It is an important technical point, however, that in this paper we
require $\lambda_t$ and $Q_t$ to be defined continuously in $t$. While
it is well known that the vector of eigenvalues $\lambda_t$ can be
defined continuously, it is also known that---if the eigenvalues are
ordered, say, such that $[\lambda_t]_1 \le \cdots \le [\lambda_t]_n$ for
all $t$---then the corresponding eigenvectors, i.e., the ordered columns
of $Q_t$, cannot be defined continuously in general. On the other hand,
if one drops the requirement that the eigenvalues in $\lambda_t$ stay
ordered, then the following result of Rellich \cite{Rellich.1969} (see
also \cite{Kato.1976}) guarantees that $\lambda_t$ and $Q_t$ can be
constructed continuously---in fact, analytically---in $t$:

\begin{theorem}[Rellich \cite{Rellich.1969}] \label{the:Rellich}
Because $A_t$ is analytic in the single parameter $t$, there exist
spectral decompositions $A_t = Q_t \Diag(\lambda_t) Q_t^T$ such that
$\lambda_t$ and $Q_t$ are analytic in $t$.
\end{theorem}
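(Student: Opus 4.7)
The plan is to exploit two structural facts together: (a) the entries of $A_t$ are affine (hence analytic) in the single real parameter $t$, so the characteristic polynomial $p_t(\lambda) = \det(\lambda I - A_t)$ has coefficients that are polynomials in $t$; and (b) because $A_t$ is real symmetric, every root of $p_t$ is real. The eigenvalues $\lambda_t$ are exactly the roots of this polynomial-in-two-variables $p_t(\lambda)$, and the eigenvectors sit in the nullspaces of $\lambda I - A_t$. So the task reduces to showing the roots of an algebraic equation with analytic coefficients can be arranged analytically, and then extracting an analytic orthonormal frame.

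First I would attack the eigenvalues. The classical Puiseux theorem says that, locally around any $t_0$, the $n$ roots of $p_t(\lambda)=0$ admit expansions in fractional powers $(t-t_0)^{1/k}$. In the Hermitian/symmetric case one gets the crucial rigidity: every such expansion must take real values for real $t$ near $t_0$, which forces the branching order $k$ to be $1$ (any genuine Puiseux branch of order $\ge 2$ produces non-real values on one side of $t_0$). Consequently each root extends as an ordinary convergent power series in $t$, and by analytic continuation along the real line one obtains $n$ globally analytic functions $t \mapsto [\lambda_t]_j$, provided one allows them to cross rather than insisting on the sorted order.

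For the eigenvectors I would use the resolvent/Riesz projection machinery. Fix $t_0$ and a group of eigenvalues of $A_{t_0}$ that coalesce into a single value $\mu_0$; pick a small circle $\Gamma \subset \mathbb{C}$ enclosing $\mu_0$ but no other eigenvalue. For $t$ in a neighborhood of $t_0$, the resolvent $(zI-A_t)^{-1}$ is analytic jointly in $(z,t)$ on a neighborhood of $\Gamma \times \{t_0\}$, so
\[
P_t \;=\; -\frac{1}{2\pi i}\oint_{\Gamma}(zI-A_t)^{-1}\,dz
\]
is a matrix-valued analytic function of $t$, of constant rank $m$, which is the orthogonal projection onto the span of eigenvectors of $A_t$ whose eigenvalues lie inside $\Gamma$. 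An analytic basis of $\mathrm{Range}(P_t)$ is obtained, e.g., by applying Gram--Schmidt to $P_t e_{i_1},\dots,P_t e_{i_m}$ for a suitable index set chosen so that these vectors are linearly independent at $t_0$; Gram--Schmidt preserves analyticity because the relevant inner products and norms are analytic and nonzero in a neighborhood. Since each eigenvalue $\lambda_t$ was already shown to be analytic, one can then refine inside each $P_t$ (decomposing further along eigenvalue branches that split away from $\mu_0$) to obtain analytic orthonormal eigenvectors. Finally, these local analytic frames are patched into a global $Q_t$ by analytic continuation along $\mathbb{R}$, using uniqueness up to an orthogonal transformation on coincident branches.

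The main obstacle is exactly the handling of crossing points where eigenvalues coincide. Ordering $\lambda_t$ in non-decreasing fashion is only Lipschitz, not analytic, at such crossings; the whole content of the theorem is that by permitting permutations of the labels through the crossings, analyticity is recovered. Making this rigorous requires the reality argument above to rule out genuine fractional branches and requires the Riesz-projection construction to handle the non-trivial algebra of multiple eigenvalues, where naive eigenvector formulas like $(\lambda_t I - A_t)^{-1}$ acting on a fixed vector break down. Everything else---the algebraic manipulations, Gram--Schmidt, and the contour integrals---is essentially routine once these two pieces are in place.
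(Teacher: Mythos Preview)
The paper does not prove this theorem; it is quoted as a classical result due to Rellich \cite{Rellich.1969} (see also Kato \cite{Kato.1976}) and is used as a black box. So there is no ``paper's own proof'' to compare against.

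Your sketch is a reasonable outline of the standard argument one finds in those references: the Puiseux-plus-reality trick to force integer branching of the eigenvalues, and the Riesz projection $P_t = -\tfrac{1}{2\pi i}\oint_\Gamma (zI-A_t)^{-1}\,dz$ to obtain analytic spectral subspaces. Two places would need more work to make the sketch rigorous. First, the global patching of local analytic eigenvalue branches along $\mathbb{R}$ uses the one-dimensionality of the parameter in an essential way (monodromy is trivial on a simply connected real interval); you allude to this but do not state it. Second, the step ``refine inside each $P_t$'' at a crossing of multiplicity $m\ge 2$ is the genuinely delicate part: one must show that the compressed operator $P_t A_t|_{\mathrm{Range}(P_t)}$ is again analytic in $t$ with respect to an analytic frame, and then recurse; your Gram--Schmidt-on-$P_t e_{i_j}$ gives an analytic basis of the total eigenspace but not yet a diagonalizing one. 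None of this is wrong, but it is where the substance of Rellich's proof lies.
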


So we define $\F_t^+$ and $\F_t^-$ using continuous spectral
decompositions provided by Theorem \ref{the:Rellich}:
\begin{align*}
  \F_t^+ &:= \{ x : \|B_t^T x\| \le b_t^T x \} \\
  \F_t^- &:= \{ x : \|B_t^T x\| \le -b_t^T x \},
\end{align*}
where $B_t$ and $b_t$ such that $A_t = B_t B_t^T - b_t b_t^T$ are
derived from the spectral decomposition as described in Section
\ref{sec:socr}. Recall from Proposition \ref{pro:invariant} that, for
each $t$, a different spectral decomposition could flip the roles of
$\F_t^+$ and $\F_t^-$, but we now observe that Theorem \ref{the:Rellich}
and Condition \ref{ass:interior} together guarantee that each $\F_t^+$
contains $\bar x$ from Condition \ref{ass:interior}. In this sense, every
$\F_t^+$ has the same ``orientation.'' Our observation is enabled by a
lemma that will be independently helpful in subsequent analysis.

\begin{lemma} \label{lem:bt^Tx=0}
Suppose Conditions \ref{ass:one_neg_eval}--\ref{ass:A0_apex} hold.
Given $t \in [0,s]$, suppose some $x \in \F_t^+$ satisfies $b_t^T x =
0$. Then $t = 0$ or $t = s$.
\end{lemma}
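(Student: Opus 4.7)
The plan is to reduce the hypothesis to the statement that $x$ lies in the null space of $A_t$, and then quote Proposition \ref{pro:spos} to force $t$ to an endpoint of $[0,s]$.

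First I would unpack the SOCr definition: from $x \in \F_t^+$ we have $\|B_t^T x\| \le b_t^T x$, and combining this with $b_t^T x = 0$ forces $\|B_t^T x\| = 0$, hence $B_t^T x = 0$. Using the factorization $A_t = B_t B_t^T - b_t b_t^T$ from the construction in Section \ref{sec:socr}, I then get
$$
A_t x \;=\; B_t(B_t^T x) - b_t(b_t^T x) \;=\; 0,
$$
so $x \in \Null(A_t)$. (Equivalently, by Proposition \ref{pro:basicprops} applied to $\F_t^+$, we have $x \in \apex(\F_t^+) = \Null(A_t)$.) Note that the conclusion is vacuous for $x = 0$, which satisfies the hypothesis for every $t$; so I would restrict attention to nonzero $x$, for which the above forces $A_t$ to be singular.

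Second I would invoke Proposition \ref{pro:spos}, whose conclusion states that under Conditions \ref{ass:one_neg_eval}--\ref{ass:A0_apex} the matrix $A_t$ is nonsingular for every $t \in (0,s)$. Combined with $t \in [0,s]$, this immediately gives $t \in [0,s] \setminus (0,s) = \{0,s\}$, which is the desired conclusion.

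There is no real obstacle: the lemma is essentially a one-line corollary of Proposition \ref{pro:spos} after the SOCr structure of $\F_t^+$ is unpacked via Proposition \ref{pro:basicprops}. The only mild subtlety worth flagging is that nonzero $x$ satisfying the hypothesis can actually exist only when $A_t$ is singular at the endpoint in question, i.e., when $A_0$ is singular (Conditions \ref{ass:A0_apex}(ii) or \ref{ass:A0_apex}(iii)) or when $s < 1$ so that $A_s$ is singular; otherwise the lemma is vacuously true at the corresponding endpoint.
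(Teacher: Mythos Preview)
Your proof is correct and essentially identical to the paper's own argument: both derive $B_t^T x = 0$ from the hypothesis, conclude $A_t x = 0$ via the factorization $A_t = B_t B_t^T - b_t b_t^T$, and then invoke Proposition~\ref{pro:spos} to force $t \in \{0,s\}$. Your explicit handling of the degenerate case $x = 0$ is a nice touch that the paper's proof glosses over.
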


\begin{proof}
Since $x^T A_t x \le 0$ with $b_t^T x = 0$, we have
$  0 = (b_t^T x)^2 \ge \| B_t^T x \|^2$ which implies
$  A_t x = (B_t B_t^T - b_t b_t^T) x = B_t (B_t^T x) - b_t (b_t^T x) = 0.$ 
So $A_t$ is singular. By Proposition \ref{pro:spos}, this implies $t=0$
or $t = s$.
\qed \end{proof}

\begin{observation} \label{obs:xbar}
Suppose Conditions \ref{ass:one_neg_eval}--\ref{ass:A0_apex} hold.
Let $\bar x \in \myint(\F_0^+\cap\F_1)$. Then for all $t \in [0,s]$, $\bar x \in \F_t^+$.
\end{observation}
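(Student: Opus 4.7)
The plan is to use the continuous spectral decomposition guaranteed by Theorem \ref{the:Rellich} together with an intermediate value theorem argument on the function $t\mapsto b_t^T\bar x$. The main task is to rule out that $\bar x$ ``switches sides'' from $\F_t^+$ to $\F_t^-$ as $t$ varies from $0$ to $s$.

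First, I would observe that (\ref{equ:neginnprod}) together with the convex combination structure gives
\[
\bar x^T A_t \bar x = (1-t)\, \bar x^T A_0 \bar x + t\, \bar x^T A_1 \bar x < 0
\qquad \text{for all } t \in [0,s],
\]
so $\bar x \in \myint(\F_t)$ throughout the interval. By Proposition \ref{pro:spos}, each $A_t$ with $t \in [0,s]$ has exactly one negative eigenvalue, so $\myint(\F_t) = \myint(\F_t^+) \cup \myint(\F_t^-)$, and these two open pieces are disjoint (since they are separated by $b_t^T x = 0$).

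Next, I would note that Theorem \ref{the:Rellich} yields spectral decompositions $A_t = Q_t\Diag(\lambda_t) Q_t^T$ analytic in $t$, from which the pair $(B_t, b_t)$ produced by (\ref{equ:def:Bb}) is continuous in $t$ (the single negative eigenvalue stays negative on $[0,s]$ and is bounded away from zero on any closed subinterval where $A_t$ is nonsingular, and continuity still extends up to $t=s$ via Rellich's analytic branches). Define the continuous function $g(t) := b_t^T \bar x$ on $[0,s]$. The central claim is that $g$ never vanishes: indeed, if $b_t^T \bar x = 0$ for some $t$, then $\bar x \in \myint(\F_t)$ forces
\[
\|B_t^T \bar x\|^2 < (b_t^T \bar x)^2 = 0,
\]
a contradiction.

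Finally, since $\bar x \in \myint(\F_0^+)$ means $b_0^T \bar x > \|B_0^T \bar x\| \ge 0$, we have $g(0) > 0$. The intermediate value theorem, applied to the continuous, nonvanishing function $g$ on the connected interval $[0,s]$, then gives $g(t) > 0$ for every $t \in [0,s]$. Combined with $\bar x \in \myint(\F_t) = \myint(\F_t^+)\cup \myint(\F_t^-)$ and the characterization that $\bar x \in \myint(\F_t^+)$ iff $b_t^T \bar x > 0$, we conclude $\bar x \in \myint(\F_t^+) \subseteq \F_t^+$ for all $t \in [0,s]$. The only subtle point is justifying continuity of $b_t$ all the way up to and including $t=s$ (where $A_s$ may be singular), which is exactly what Rellich's theorem provides; everything else is routine.
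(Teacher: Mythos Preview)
Your proof is correct and follows essentially the same route as the paper: both arguments use Rellich's theorem for the continuity of $b_t$, note $b_0^T\bar x>0$, and apply the intermediate value theorem to rule out a sign change of $t\mapsto b_t^T\bar x$. The only difference is at the contradiction step: where the paper invokes Lemma~\ref{lem:bt^Tx=0} (showing that $b_r^T\bar x=0$ forces $A_r$ to be singular, hence $r\in\{0,s\}$), you exploit the \emph{strict} inequality $\bar x^T A_t\bar x<0$ directly to obtain $\|B_t^T\bar x\|^2<0$, which is a slightly more elementary shortcut that bypasses the need for Proposition~\ref{pro:spos} at this point.
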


\begin{proof}
Condition \ref{ass:interior} implies $b_0^T \bar x > 0$. Let
$t \in (0,s]$ be fixed. Since $\bar x^T A_t \bar x < 0$ by
(\ref{equ:neginnprod}), either $\bar x \in \F_t^+$ or $\bar x
\in \F_t^-$. Suppose for contradiction that $\bar x \in \F_t^-$,
i.e., $b_t^T \bar x < 0$. Then the continuity of $b_t$ by Theorem
\ref{the:Rellich} implies the existence of $r \in (0,t)$ such that
$b_r^T \bar x = 0$. Because $\bar x^T A_r \bar x < 0$ as well, $\bar x
\in \F_r^+$. By Lemma \ref{lem:bt^Tx=0}, this implies $r=0$ or $r=s$, a
contradiction.
\qed \end{proof}

\noindent In particular, Observation \ref{obs:xbar} implies that our
discussion in Section \ref{sec:result} on choosing $\bar x \in
\F_t^+$ to facilitate the statement of Theorem \ref{the:main} is indeed
consistent with the discussion here.

The primary result of this subsection,  
$\F_0^+ \cap \F_s^+$ is a valid convex relaxation of
$\F_0^+ \cap \F_1$, is given below.

\begin{proposition} \label{pro:containment}
Suppose Conditions \ref{ass:one_neg_eval}--\ref{ass:A0_apex} hold. Then 
$\F_0^+ \cap \F_1 \subseteq \F_0^+ \cap \F_s^+$.
\end{proposition}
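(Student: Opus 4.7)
The plan is to fix an arbitrary $x \in \F_0^+ \cap \F_1$ and prove $x \in \F_s^+$, since $x \in \F_0^+$ is automatic. First I would check $x \in \F_s$ by the one-line convex-combination bound $x^T A_s x = (1-s)\, x^T A_0 x + s\, x^T A_1 x \le 0$, using $x^T A_0 x \le 0$ (from $x \in \F_0^+$) and $x^T A_1 x \le 0$ (from $x \in \F_1$). So the real task is to verify the correct ``sign,'' i.e., $b_s^T x \ge 0$, which distinguishes $\F_s^+$ from $\F_s^-$.

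The natural tool is the continuous family $b_t$ provided by Theorem \ref{the:Rellich}. Define $g(t) := b_t^T x$ on $[0,s]$; this is continuous. Since $x \in \F_0^+$ we have $g(0) \ge 0$. Assume for contradiction that $g(s) < 0$, and set $r := \sup\{t \in [0,s] : g(t) \ge 0\}$. By continuity, $r < s$ and $g(r) = 0$. The same convex-combination argument above shows $x^T A_r x \le 0$, i.e., $x \in \F_r$; combined with $b_r^T x = 0$, the inequality $(b_r^T x)^2 \ge \|B_r^T x\|^2$ forces $B_r^T x = 0$. Hence $\|B_r^T x\| = 0 \le 0 = b_r^T x$, i.e., $x \in \F_r^+$.

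If $r \in (0,s)$, then Lemma \ref{lem:bt^Tx=0} applied to this pair $(r,x)$ forces $r=0$ or $r=s$, contradicting $r \in (0,s)$. So the only remaining possibility is $r = 0$, meaning $g(0) = 0$ and, by the paragraph above, $B_0^T x = 0$; combined with $b_0^T x = 0$ this gives $A_0 x = B_0(B_0^T x) - b_0 (b_0^T x) = 0$, so $x \in \Null(A_0) = \apex(\F_0^+)$. This is where Condition \ref{ass:A0_apex} is exactly what is needed, which I expect to be the main (though minor) obstacle: under (i), $A_0$ is invertible so $x = 0$ and $g(s) = 0$, contradicting $g(s)<0$; under (ii), $A_1$ is positive definite on $\Null(A_0)$ so $x^T A_1 x > 0$ unless $x = 0$, contradicting $x \in \F_1$; under (iii), by (\ref{equ:def:s}) we have $s = 0$, so the situation $r = 0 < s$ does not arise. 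In every case we reach a contradiction, so $g(s) \ge 0$, completing the proof.
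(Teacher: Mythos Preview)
Your proof is correct and follows essentially the same route as the paper's: show $x^T A_s x \le 0$ by convex combination, then argue by contradiction using the continuous family $b_t$ (Theorem~\ref{the:Rellich}) to find $r \in [0,s)$ with $b_r^T x = 0$, invoke Lemma~\ref{lem:bt^Tx=0} to force $r=0$, and use Condition~\ref{ass:A0_apex} to finish. You are slightly more explicit than the paper in deriving $A_0 x = 0$ and in separating the three subcases of Condition~\ref{ass:A0_apex}, whereas the paper folds these into the observation that $s>0$ rules out case~(iii) and that $r=0$ with $x\ne 0$ forces case~(ii); but the substance is identical.
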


\begin{proof}
If $s = 0$, the result is trivial. So assume $s > 0$. In particular,
Condition \ref{ass:A0_apex}(i) or \ref{ass:A0_apex}(ii) holds. Let $x
\in \F^+_0 \cap \F_1$, that is, $x^T A_0 x \le 0$, $b_0^T x \ge 0$, and
$x^T A_1 x \le 0$. We would like to show $x \in \F^+_0 \cap \F^+_s$.
So we need $x^T A_s x \le 0$ and $b_s^T x \ge 0$. The first inequality
holds because $x^T A_s x = (1-s) \, x^T A_0 x + s \, x^T A_1 x \le 0$.
Now suppose for contradiction that $b_s^T x < 0$. In particular, $x
\ne 0$. Then by the continuity of $b_t$ via Theorem \ref{the:Rellich},
there exists $0 \le r < s$ such that $b_r^T x = 0$. Since $x^T A_r x \le
0$ also, $x \in \F_r^+$, and Lemma \ref{lem:bt^Tx=0} implies $r = 0$.
So Condition \ref{ass:A0_apex}(ii) holds. However, $x \in \F_1$ also,
contradicting that $A_1$ is positive definite on $\Null(A_0)$.
\qed \end{proof}

\subsection{The containment $\F_0^+ \cap \F_s^+ \subseteq \cnh(\F_0^+ \cap \F_1)$}

Proposition \ref{pro:containment} in the preceding
subsection establishes that $\F_0^+ \cap \F_s^+$ is a valid
convex relaxation of $\F_0^+ \cap \F_1$ under Conditions
\ref{ass:one_neg_eval}--\ref{ass:A0_apex}. We now show that, in
essence, the reverse inclusion holds under Condition \ref{ass:As_null}
(see Proposition \ref{pro:clconichull}). Indeed, when $s = 1$, we
clearly have $\F_0^+ \cap \F_1^+ \subseteq \F_0^+ \cap \F_1 \subseteq
\cnh(\F_0^+ \cap \F_1)$. So the true case of interest is $s < 1$, for
which Condition \ref{ass:As_null} is the key ingredient. (However,
results are stated to cover the cases $s < 1$ and $s=1$ simultaneously.)

As mentioned in Section \ref{sec:result}, Condition \ref{ass:As_null} is
a type of regularity condition in light of Lemma \ref{lem:As_null} next.
The proof of Proposition \ref{pro:clconichull} also relies on Lemma
\ref{lem:As_null}.

\begin{lemma} \label{lem:As_null}
Suppose Conditions \ref{ass:one_neg_eval}--\ref{ass:A0_apex} hold. Then
$\apex(\F_s^+) \subseteq \F_1$.
\end{lemma}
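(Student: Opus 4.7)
The plan is to show $\apex(\F_s^+) = \Null(A_s) \subseteq \F_1$ (equivalently, every $x\in\Null(A_s)$ satisfies $x^T A_1 x\le 0$) by splitting on the value of $s$. First, dispose of the boundary cases. If $s=1$ then $A_s = A_1$ and $x \in \Null(A_1)$ yields $x^T A_1 x = 0$. If $s=0$, then by the definition (\ref{equ:def:s}) Condition \ref{ass:A0_apex}(iii) must hold, and this asserts that $A_1$ is negative definite on $\Null(A_0) = \Null(A_s)$, so $x^T A_1 x \le 0$ is immediate. Thus the nontrivial situation is $s\in(0,1)$.

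For $s\in(0,1)$, the idea is to construct an analytic curve $y(t)$ on $[0,s]$ that (i) passes through $x$ at $t=s$ and (ii) sits inside the nonnegative eigenspace of $A_t$ for every $t$. I would apply Theorem \ref{the:Rellich} to obtain analytic eigenpairs $(\lambda_j(t),q_j(t))$ of $A_t$. By Proposition \ref{pro:spos}, $A_t$ has exactly one negative eigenvalue for each $t\in[0,s]$; since the sets $S_j:=\{t\in[0,s]:\lambda_j(t)<0\}$ are pairwise disjoint, open in $[0,s]$, and cover the connected set $[0,s]$, a standard connectedness argument forces exactly one $S_j$ to equal all of $[0,s]$. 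This gives a single analytic eigenpair $(\mu(t),v(t))$ with $\mu(t)<0$ on $[0,s]$, and all remaining eigenvalues of $A_t$ nonnegative there. Since $\mu(s)<0$ is necessarily simple, $\Null(A_s)\perp v(s)$; hence setting
\[
   y(t) := x - (v(t)^T x)\, v(t)
\]
produces an analytic curve with $y(s)=x$ and $y(t)\perp v(t)$ for all $t$, so $y(t)$ lies in the span of the nonnegative eigenvectors of $A_t$.

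Consequently $g(t) := y(t)^T A_t y(t) \ge 0$ on $[0,s]$ and $g(s) = x^T A_s x = 0$, so the minimum is attained at $t=s$ and $g'(s)\le 0$. Differentiating yields $g'(s) = 2\,y'(s)^T A_s y(s) + y(s)^T(A_1-A_0)y(s)$; the first term vanishes because $A_s y(s) = A_s x = 0$, so $g'(s) = x^T A_1 x - x^T A_0 x \le 0$. Combining this with the linear identity $(1-s)\,x^T A_0 x + s\,x^T A_1 x = x^T A_s x = 0$, which rearranges to $x^T A_0 x = -\tfrac{s}{1-s}\, x^T A_1 x$, produces $\tfrac{1}{1-s}\, x^T A_1 x \le 0$; since $1-s>0$, this forces $x^T A_1 x \le 0$, as desired. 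The main technical obstacle is the Rellich bookkeeping that transports the unique negative eigenvalue along a single analytic curve on $[0,s]$ (so that $v(t)$ and hence $y(t)$ are unambiguously defined); once that is in hand, the rest is a short one-variable calculus argument exploiting the minimum of $g$ at $t=s$.
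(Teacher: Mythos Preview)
Your proof is correct but takes a genuinely different route from the paper's. For $s\in(0,1)$ the paper argues by cases on the sign of $d^T A_0 d$: if $d^T A_0 d\ge 0$ the conclusion is immediate from $0=(1-s)\,d^T A_0 d + s\,d^T A_1 d$; if $d^T A_0 d<0$ the paper derives a geometric contradiction by placing $-d$ in $\F_s^+\cap\myint(\F_0^-)$, using set convergence $\F_t^+\to\F_s^+$ to find nearby $t<s$ with $\F_t^+\cap\myint(\F_0^-)\ne\emptyset$, and then invoking Lemma~\ref{lem:bt^Tx=0}. You instead exploit Rellich analyticity directly: a clean connectedness argument isolates a single analytic branch $(\mu(t),v(t))$ carrying the negative eigenvalue across $[0,s]$, the projection $y(t)=x-(v(t)^Tx)v(t)$ stays in the nonnegative eigenspace of $A_t$, and the one-sided derivative condition $g'(s)\le 0$ (using $A_s x=0$ to kill the $y'(s)$ term) combines with the linear identity $(1-s)\,x^T A_0 x + s\,x^T A_1 x=0$ to force $x^T A_1 x\le 0$. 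Your argument is more self-contained---it bypasses both Lemma~\ref{lem:bt^Tx=0} and the set-convergence step---and treats the two subcases of the paper uniformly; the paper's route is more in the geometric idiom of the surrounding development (SOCr cones $\F_t^\pm$ and the sign of $b_t^T x$) and avoids the Rellich eigen-branch bookkeeping.
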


\begin{proof}
By Proposition \ref{pro:basicprops}, the claimed result is equivalent to
$\Null(A_s) \subseteq \F_1$. Let $d \in \Null(A_s)$. If $s = 1$, then
$d^T A_1 d = 0$, i.e., $d \in \bd(\F_1) \subseteq \F_1$, as desired. If
$s = 0$, then Condition \ref{ass:A0_apex}(iii) holds, that is, $A_0$ is
singular and $A_1$ is negative definite on $\Null(A_0)$. Then $d \in
\Null(A_0)$ implies $d^T A_1 d \le 0$, as desired.

So assume $s \in (0,1)$. If $d \not\in \myint(\F_0)$, that is, $d^T A_0
d \ge 0$, then the equation $0 = (1-s) \, d^T A_0 d + s \, d^T A_1 d$
implies $d^T A_1 d \le 0$, as desired.

We have thus reduced to the case $s \in (0,1)$ and $d \in \myint(\F_0)$,
and we proceed to derive a contradiction. Without loss of generality,
assume that $d \in \myint(\F_0^+)$ and $-d \in \myint(\F_0^-)$. We know
$-d \in \Null(A_s) = \apex(\F_s^+) \subseteq \F_s^+$. In total, we have
$-d \in \F_s^+ \cap \myint(\F_0^-)$. We claim that, in fact, $\F_t^+
\cap \myint(\F_0^-) \ne \emptyset$ as $t \to s$. 

Note that $\F_t^+$ is a full-dimensional set because $\bar x^T A_t
\bar x < 0$ by (\ref{equ:neginnprod}). Also, $\F_t^+$ is defined
by the intersection of a homogeneous quadratic $x^TA_tx\leq 0$ and a
linear constraint $b_t^Tx\geq 0$ and $(A_t,b_t) \to (A_s,b_s)$ as $t
\to s$. Then the boundary of $\F_t^+$ converges to the boundary of
$\F_s^+$ as $t \to s$. Since $\F_t^+$ is a full-dimensional, convex set
(in fact SOC), $\F_t^+$ then converges as a set to $\F_s^+$ as $t \to
s$. So there exists a sequence $y_t \in \F_t^+$ converging to $-d$. In
particular, $\F_t^+ \cap \myint(\F_0^-) \ne \emptyset$ for $t \to s$.

We can now achieve the desired contradiction. For $t < s$,
let $x \in \F_t^+ \cap \myint(\F_0^-)$. Then $x^T A_0 x \le 0,\ 
b_0^T x < 0$ and $x^T A_t x \le 0,\  b_t^T x \ge 0$. It follows that $x^T
A_r x \le 0,\  b_r^T x = 0$ for some $0 < r \le t < s$. Hence, Lemma
\ref{lem:bt^Tx=0} implies $r = 0$ or $r = s$, a contradiction.
\qed \end{proof}

\begin{proposition} \label{pro:clconichull}
Suppose Conditions \ref{ass:one_neg_eval}--\ref{ass:As_null} hold. Then
$\F_0^+ \cap \F_s^+ \subseteq \cnh(\F_0^+ \cap \F_1)$.
\end{proposition}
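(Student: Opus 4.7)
The plan is to show, for each $x \in \F_0^+ \cap \F_s^+$, that $x$ is a convex combination of two points in $\F_0^+ \cap \F_1$. When $s = 1$ we have $\F_s^+ \subseteq \F_1$ and the conclusion is immediate, so assume $s < 1$ and invoke Condition \ref{ass:As_null} to fix $d \in \apex(\F_s^+) \cap \myint(\F_1) = \Null(A_s) \cap \myint(\F_1)$. If $x \in \F_1$ there is nothing to show, so assume $x^T A_1 x > 0$; then $x^T A_s x = (1-s)\,x^T A_0 x + s\, x^T A_1 x \le 0$ forces $x^T A_0 x < 0$, i.e.\ $x \in \myint(\F_0^+)$. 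The perturbation line $L := \{x + \alpha d : \alpha \in \R\}$ satisfies $L \subseteq \F_s^+$ since $A_s d = 0$ and $b_s^T d = 0$, and the functions $\phi_i(\alpha) := (x + \alpha d)^T A_i (x + \alpha d)$ obey $(1-s)\phi_0(\alpha) + s\phi_1(\alpha) \equiv x^T A_s x \le 0$.

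Consider first cases \ref{ass:A0_apex}(i) and \ref{ass:A0_apex}(ii), where $s > 0$. From $d^T A_s d = 0$ and $d^T A_1 d < 0$ I obtain $d^T A_0 d > 0$, so $\phi_0$ is an upward parabola with $\phi_0(0) < 0$, giving $L \cap \F_0 = [\alpha_2, \alpha_3]$ with $\alpha_2 < 0 < \alpha_3$. Likewise $\phi_1$ is a downward parabola with $\phi_1(0) > 0$, giving $L \cap \F_1 = (-\infty, \alpha_1] \cup [\alpha_4, \infty)$ with $\alpha_1 < 0 < \alpha_4$. The identity $(1-s)\phi_0 + s\phi_1 \le 0$ interlocks them: wherever $\phi_0 > 0$ one has $\phi_1 < 0$, forcing $\alpha_2 \le \alpha_1$ and $\alpha_4 \le \alpha_3$. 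Hence both $x + \alpha_1 d$ and $x + \alpha_4 d$ lie in $\F_0 \cap \F_1$.

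The main obstacle is upgrading membership from $\F_0$ to $\F_0^+$, i.e.\ showing $b_0^T(x + \alpha d) \ge 0$ on $[\alpha_2, \alpha_3]$. Since $\alpha \mapsto b_0^T(x + \alpha d)$ is affine and equals $b_0^T x > 0$ at $\alpha = 0$, the only obstruction would be a zero $\alpha^\star \in [\alpha_2, \alpha_3]$, which combined with $x + \alpha^\star d \in \F_0$ would put $x + \alpha^\star d \in \apex(\F_0^+) = \Null(A_0)$ by Proposition \ref{pro:basicprops}. In case \ref{ass:A0_apex}(i), this forces $x + \alpha^\star d = 0$, whence $x^T A_0 x = (\alpha^\star)^2 d^T A_0 d > 0$, contradicting $x \in \myint(\F_0^+)$. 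In case \ref{ass:A0_apex}(ii), if $x + \alpha^\star d \ne 0$ the positive-definiteness of $A_1$ on $\Null(A_0)$ yields $\phi_1(\alpha^\star) > 0$, while $\phi_0(\alpha^\star) = 0$ together with $\phi_s(\alpha^\star) \le 0$ and $s > 0$ gives $\phi_1(\alpha^\star) \le 0$, a contradiction; the case $x + \alpha^\star d = 0$ is ruled out just as in (i). Hence no such $\alpha^\star$ exists, $[\alpha_2, \alpha_3] \subseteq L \cap \F_0^+$, and the convex combination
\[
    x = \frac{\alpha_4}{\alpha_4 - \alpha_1}(x + \alpha_1 d) + \frac{-\alpha_1}{\alpha_4 - \alpha_1}(x + \alpha_4 d)
\]
exhibits $x \in \cvh(\F_0^+ \cap \F_1) \subseteq \cnh(\F_0^+ \cap \F_1)$.

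Case \ref{ass:A0_apex}(iii), where $s = 0$, is easier: here $d \in \Null(A_0) \cap \myint(\F_1)$ makes $\phi_0(\alpha) \equiv x^T A_0 x \le 0$ and $b_0^T(x + \alpha d) \equiv b_0^T x \ge 0$, so $L \subseteq \F_0^+$ outright, and the same convex combination of the two real roots $\alpha_1 < 0 < \alpha_4$ of $\phi_1$ completes the argument.
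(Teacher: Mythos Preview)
Your proof is correct, and the overall architecture---perturb $x$ along a direction $d \in \Null(A_s) \cap \myint(\F_1)$, locate the two roots of $\phi_1$, and express $x$ as their convex combination---matches the paper's. The genuine difference is in how you upgrade membership of the endpoints from $\F_0$ to $\F_0^+$. The paper argues by symmetry: if $x_l \in \F_0^- \setminus \F_0^+$, then the $\F_0^-$-analog of Proposition~\ref{pro:containment} (which rests on Lemma~\ref{lem:bt^Tx=0} and hence on Rellich's analytic parameterization of eigenvectors) forces $x_l \in \F_s^-$; combined with $x_l \in \F_s^+$ this yields $x_l \in \Null(A_s)$, hence $x \in \Null(A_s)$, contradicting Lemma~\ref{lem:As_null}. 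You instead work entirely on the line $L$: you introduce the roots $\alpha_2 < 0 < \alpha_3$ of $\phi_0$, establish the interlocking $[\alpha_1,\alpha_4] \subseteq [\alpha_2,\alpha_3]$ from the identity $(1-s)\phi_0 + s\phi_1 \equiv x^T A_s x$, and then rule out a zero of the affine map $\alpha \mapsto b_0^T(x+\alpha d)$ on $[\alpha_2,\alpha_3]$ by a direct case split on Condition~\ref{ass:A0_apex}. Your route is more elementary and self-contained---it never invokes Rellich, Lemma~\ref{lem:bt^Tx=0}, or Proposition~\ref{pro:containment}---at the price of a three-way case analysis that the paper's symmetry argument avoids. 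One cosmetic remark: the deduction ``$x^T A_0 x < 0$'' in your second sentence uses $s > 0$ and so strictly applies only to cases~\ref{ass:A0_apex}(i)--(ii); your case~(iii) paragraph correctly uses only $x^T A_0 x \le 0$, so there is no logical gap.
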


\begin{proof}
First, suppose $s = 1$. Then the result follows because $\F_0^+ \cap
\F_1^+ \subseteq \F_0^+ \cap \F_1 \subseteq \cnh(\F_0^+ \cap \F_1)$. So
assume $s \in [0,1)$.

Let $x \in \F^+_0 \cap \F^+_s$, that is, $x^T A_0 x \le 0$, $b_0^T x \ge
0$ and $x^T A_s x \le 0$, $b_s^T x \ge 0$. If $x^T A_1 x \le 0$, we are
done. So assume $x^T A_1 x > 0$. 

By Condition \ref{ass:As_null}, there
exists $d \in \Null(A_s)$ such that $d^T A_1 d < 0$. In addition, $d$
is necessarily perpendicular to the negative eigenvector $b_s$. For all
$\epsilon \in \R$, consider the affine line of points given by $x_\eps
:= x + \epsilon \, d$. We have
\[
  \left.
  \begin{array}{r}
x_\eps^T A_s x_\eps =  (x + \epsilon \, d)^T A_s (x + \epsilon \, d) = x^T A_s x \le 0 \\
b_s^T x_\eps =  b_s^T (x + \epsilon \, d) = b_s^T x \ge 0
\end{array}
\right\}
\ \ \ \Longrightarrow \ \ \ x_\epsilon \in \F^+_s.
\]
Note that $x_\eps^T A_1 x_\eps = x^T A_1 x + 2 \, \epsilon \, d^TA_1 x +
\epsilon^2 \, d^T A_1 d$. Then $x_\eps^T A_1 x_\eps$ defines a quadratic
function of $\eps$ and its roots are given by
$ \eps_{\pm}=\frac{-d^TA_1 x\pm\sqrt{(d^TA_1 x)^2-(x^T A_1 x)(d^T A_1 d)}}{d^T A_1 d}.$  
Since $x^T A_1 x > 0$ and $d^T A_1 d < 0$, the discriminant is
greater than $|d^T A_1 x|$. Hence, one of the roots will be
positive and the other one will be negative. Then there exist $l:=\eps_-
< 0 < \eps_+=:u$ such that $x_{l}^T A_1 x_{l}^T = x_{u}^T A_1 x_{u} =
0$, i.e., $x_{l},x_{u}\in\F_1$. Then $s < 1$ and $x_{l}^T A_s x_{l} \le
0$ imply $x_{l}^T A_0 x_{l} \le 0$, and hence $x_{l}\in\F_0$. Similarly,
$x_{u}^T A_0 x_{u} \le 0$ leading to $x_{u}\in\F_0$. We will prove in
the next paragraph that both $x_l$ and $x_u$ are in $\F_0^+$, which will
establish the result because then $x_{l}, x_{u} \in \F^+_0 \cap \F^1$
and $x$ is a convex combination of $x_{l}$ and $x_{u}$.

Suppose that at least one of the two points $x_l$ or $x_u$ is not a
member of $\F_0^+$. Without loss of generality, say $x_l \not \in
\F_0^+$. Then $x_l \in \F^-_0$ with $-b_0^T x_l > 0$. Similar to
Proposition \ref{pro:containment}, we can prove $\F_0^- \cap \F_1
\subseteq \F_0^- \cap \F_s^-$, and so $x_l \in \F_0^- \cap \F_s^-$.
Then $x_l \in \F_s^+ \cap \F_s^-$, which implies $b_s^T x_l = 0$ and
$B_s^T x_l = 0$, which in turn implies $A_s x_l = 0$, i.e., $x_l \in
\Null(A_s)$. Then $x + l \, d = x_l  \in \Null(A_s)$ implies $x \in \Null(A_s)$ also.
Then $x \in \F_1$ by Lemma \ref{lem:As_null}, but this contradicts the
earlier assumption that $x^T A_1 x > 0$.
\qed \end{proof}

\subsection{Intersection with an affine hyperplane}

As discussed at the beginning of this section, Propositions
\ref{pro:containment}-\ref{pro:clconichull} allow us to prove the first
two statements of Theorem \ref{the:main}. In this subsection, we prove
the last statement of the theorem via Proposition \ref{pro:H1} below.
Recall that $H^1$ and $H^0$ are defined according to (\ref{equ:def:H1})
and $(\ref{equ:def:H0})$, where $h \in \R^n$. Also define
\[
  H^+ := \{ x : h^T x \ge 0 \}.
\]

Our first task is to prove the analog of Propositions
\ref{pro:containment}--\ref{pro:clconichull} under intersection with
$H^+$. Specifically, we wish to show that the inclusions

\begin{equation} \label{equ:containmentH+}
\F_0^+ \cap \F_1 \cap H^+   \subseteq
\F_0^+ \cap \F_s^+ \cap H^+ \subseteq
\cnh(\F_0^+ \cap \F_1 \cap H^+)
\end{equation}
hold under Conditions \ref{ass:one_neg_eval}--\ref{ass:hyperplane}. As
Condition \ref{ass:hyperplane} consists of two parts, we break the proof
into two corresponding parts (Lemma \ref{lem:halfspace1} and Corollary
\ref{cor:halfspace2}). Note that Condition \ref{ass:hyperplane} only
applies when $s < 1$, although results are stated covering both $s < 1$
and $s=1$ simultaneously.

\begin{lemma} \label{lem:halfspace1}
Suppose Conditions \ref{ass:one_neg_eval}--\ref{ass:As_null}
and the first part of Condition \ref{ass:hyperplane} hold. Then
(\ref{equ:containmentH+}) holds.
\end{lemma}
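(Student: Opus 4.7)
The plan is to mimic the proof of Proposition~\ref{pro:clconichull} but with an extra bookkeeping requirement that the constructed segment lie entirely in the halfspace $H^+$. The first inclusion in (\ref{equ:containmentH+}) is immediate: by Proposition~\ref{pro:containment}, $\F_0^+ \cap \F_1 \subseteq \F_0^+ \cap \F_s^+$, and intersecting both sides with $H^+$ preserves the inclusion. So the real work is the second inclusion.

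For the second inclusion, first handle the trivial case $s = 1$: then $\F_0^+ \cap \F_s^+ \subseteq \F_0^+ \cap \F_1$, so intersecting with $H^+$ and taking $\cnh$ gives the result. Hence assume $s < 1$, so that the first part of Condition~\ref{ass:hyperplane} applies and supplies some $d \in \apex(\F_s^+) \cap \myint(\F_1) \cap H^0$. Fix an arbitrary $x \in \F_0^+ \cap \F_s^+ \cap H^+$. If $x^T A_1 x \le 0$, then $x \in \F_0^+ \cap \F_1 \cap H^+ \subseteq \cnh(\F_0^+ \cap \F_1 \cap H^+)$ and we are done, so suppose $x^T A_1 x > 0$.

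Now I would replay the line-argument of Proposition~\ref{pro:clconichull} with $x_\epsilon := x + \epsilon\, d$. Because $d \in \apex(\F_s^+) = \Null(A_s)$, the same calculation as before shows $x_\epsilon^T A_s x_\epsilon = x^T A_s x \le 0$ and $b_s^T x_\epsilon = b_s^T x \ge 0$, hence $x_\epsilon \in \F_s^+$ for every $\epsilon \in \R$. The quadratic $\epsilon \mapsto x_\epsilon^T A_1 x_\epsilon$ has discriminant $(d^T A_1 x)^2 - (x^T A_1 x)(d^T A_1 d) > 0$ since $x^T A_1 x > 0$ and $d^T A_1 d < 0$ (because $d \in \myint(\F_1)$), and the product of its roots is negative, so there exist $\epsilon_- < 0 < \epsilon_+$ with $x_{\epsilon_\pm} \in \bd(\F_1) \subseteq \F_1$. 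Writing $x_l := x_{\epsilon_-}$ and $x_u := x_{\epsilon_+}$, the relation $x_{l/u}^T A_s x_{l/u} \le 0$ together with $s < 1$ and $x_{l/u}^T A_1 x_{l/u} = 0$ forces $x_{l/u}^T A_0 x_{l/u} \le 0$; and the argument at the end of the proof of Proposition~\ref{pro:clconichull}, using $\F_0^- \cap \F_1 \subseteq \F_0^- \cap \F_s^-$ and Lemma~\ref{lem:As_null}, excludes $x_l, x_u \in \F_0^-$, so in fact $x_l, x_u \in \F_0^+$.

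The new, key ingredient is that $h^T d = 0$ since $d \in H^0$, which gives $h^T x_l = h^T x_u = h^T x \ge 0$, so $x_l, x_u \in \F_0^+ \cap \F_1 \cap H^+$. Since $x$ is a strict convex combination of $x_l$ and $x_u$, it lies in $\cnh(\F_0^+ \cap \F_1 \cap H^+)$, completing the proof. I expect the only real obstacle to be making sure that the two endpoints land in $\F_0^+$ rather than $\F_0^-$, but this is already handled verbatim by the end of Proposition~\ref{pro:clconichull}'s argument using Lemma~\ref{lem:As_null}; the hyperplane considerations then reduce to the single observation that $d \in H^0$.
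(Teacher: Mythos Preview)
Your proposal is correct and follows essentially the same approach as the paper's own proof: invoke Proposition~\ref{pro:containment} for the first inclusion, then replay the line argument of Proposition~\ref{pro:clconichull} using the specific $d \in \apex(\F_s^+) \cap \myint(\F_1) \cap H^0$ supplied by the first part of Condition~\ref{ass:hyperplane}, so that $h^T x_l = h^T x_u = h^T x \ge 0$ places both endpoints in $H^+$. Your write-up is in fact slightly more explicit than the paper's (you spell out the $s=1$ case and the discriminant computation), but the logic is identical.
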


\begin{proof}
Proposition \ref{pro:containment} implies that $\F_0^+ \cap \F_1 \cap
H^+ \subseteq \F_0^+ \cap \F_s^+ \cap H^+$. Moreover, we can repeat the
proof of Proposition \ref{pro:clconichull}, intersecting with $H^+$
along the way. However, we require one key modification in the proof of
Proposition \ref{pro:clconichull}.

Let $x \in \F_0^+ \cap \F_s^+ \cap H^+$ with $x^T A_1 x > 0$. Then,
mimicking the proof of Proposition \ref{pro:clconichull} for $s \in
[0,1)$ and $d \in \apex(\F_s^+) \cap \myint(\F_1)$ from Condition \ref{ass:As_null}, $x \in \{ x_\epsilon := x + \epsilon \, d : \epsilon \in \R
\} \subseteq \F_s^+$.
Moreover, $x$ is a strict convex combination of points $x_l, x_u
\in \F_0^+ \cap \F_1$ where $x_l, x_u$ are as defined in the proof of Proposition \ref{pro:clconichull}. Hence, the entire closed interval from $x_l$
to $x_u$ is contained in $\F_0^+ \cap \F_s^+$.

Under the first part of Condition \ref{ass:hyperplane}, if there exists
$d \in\apex(\F_s^+)\cap \myint(\F_1) \cap H^0$, then $h^Td=0$ and this
particular $d$ can be used to show that $x_l, x_u$ identified in the
proof of Proposition \ref{pro:clconichull} also satisfy $h^Tx_l =h^T(x +
l \, d) = h^T x \ge 0$ (recall that $x\in H^+$) and $h^Tx_u = h^T(x + u
\, d) =h^T x \ge 0$, i.e., $x_l, x_u\in \F_0^+ \cap \F_1 \cap H^+$. Then
this implies $x \in \F_0^+ \cap \F_1 \cap H^+$, as desired.
\qed \end{proof}

\noindent Regarding the second part of Condition \ref{ass:hyperplane},
we prove Corollary \ref{cor:halfspace2} using the following more general
lemma involving cones that are not necessarily SOCr:

\begin{lemma}\label{lem:cross-section}
Let $\G_0$, $\G_1$, and $\G_s$ be cones such that $\G_0, \G_s$
are convex, $\G_0 \cap \G_1 \subseteq \G_0\cap \G_s \subseteq
\cnh(\G_0\cap\G_1)$ and $\G_0\cap\G_s\cap H^0 \subseteq \G_1$. Then
$$\G_0 \cap \G_1 \cap H^+ \subseteq \G_0\cap\G_s \cap H^+ \subseteq
\cnh(\G_0\cap\G_1 \cap H^+).$$
\end{lemma}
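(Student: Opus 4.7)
The first inclusion $\G_0 \cap \G_1 \cap H^+ \subseteq \G_0 \cap \G_s \cap H^+$ is immediate from the hypothesis $\G_0 \cap \G_1 \subseteq \G_0 \cap \G_s$, so the real work is the second inclusion. My plan is to take an arbitrary $x \in \G_0 \cap \G_s \cap H^+$, invoke the hypothesis $\G_0 \cap \G_s \subseteq \cnh(\G_0 \cap \G_1)$ to write $x = \sum_{i=1}^k \lambda_i y_i$ with $\lambda_i \geq 0$ and $y_i \in \G_0 \cap \G_1$, and then manipulate this conic representation so that only terms lying in $H^+$ remain. The terms with $h^T y_i < 0$ are the obstacles; I will need to ``absorb'' them using the hypothesis $\G_0 \cap \G_s \cap H^0 \subseteq \G_1$.

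The key aggregation step is as follows. Split the representation as $x = x^+ + x^-$, where $x^+ := \sum_{i:\, h^T y_i \geq 0} \lambda_i y_i$ and $x^- := \sum_{i:\, h^T y_i < 0} \lambda_i y_i$. Because $\G_0$ and $\G_s$ are convex cones and each $y_i \in \G_0 \cap \G_1 \subseteq \G_0 \cap \G_s$, both $x^+$ and $x^-$ lie in $\G_0 \cap \G_s$. If $x^- = 0$, then $x = x^+$ is already a conic combination of points in $\G_0 \cap \G_1 \cap H^+$ and we are done. Otherwise $h^T x^- < 0$, and the constraint $h^T x \geq 0$ forces $h^T x^+ > 0$, so I can define the scalar $\mu := -h^T x^- / h^T x^+ \in (0, 1]$ (the bound $\mu \leq 1$ coming exactly from $h^T x \geq 0$).

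Now set $w := \mu\, x^+ + x^-$. By the convex-cone property, $w \in \G_0 \cap \G_s$; by construction, $h^T w = 0$, so $w \in \G_0 \cap \G_s \cap H^0$, and the third hypothesis gives $w \in \G_1$. Thus $w \in \G_0 \cap \G_1 \cap H^0 \subseteq \G_0 \cap \G_1 \cap H^+$. Rewriting $x = x^+ + x^- = (1-\mu)\, x^+ + w$ and noting that $1 - \mu \geq 0$ and that $x^+$ itself is a conic combination of the $y_i$ with $h^T y_i \geq 0$ (each such $y_i$ being in $\G_0 \cap \G_1 \cap H^+$), I obtain $x$ as a finite conic combination of points in $\G_0 \cap \G_1 \cap H^+$, which completes the proof.

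The main obstacle is recognizing the right aggregation trick; once one sees that combining the ``bad'' part $x^-$ with just enough of the ``good'' part $\mu\, x^+$ produces a point on $H^0$ to which the hypothesis $\G_0 \cap \G_s \cap H^0 \subseteq \G_1$ applies, the rest is bookkeeping. The inequality $\mu \leq 1$, which is precisely what allows $x$ to be split as $(1 - \mu) x^+ + w$ with nonnegative coefficients, is the one quantitative point that must be checked carefully, and it is exactly equivalent to $x \in H^+$.
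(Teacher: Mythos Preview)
Your proof is correct. The approach differs from the paper's in a pleasant way: the paper splits into the cases $h^T x = 0$ and $h^T x > 0$, and in the latter case it corrects each individual ``bad'' generator $x^j$ (with $h^T x^j < 0$) by forming a conic combination $y^j = \alpha_j x + \beta_j x^j$ lying on $H^0$, then rewrites $x$ as a conic combination of the good generators together with all the $y^j$. You instead aggregate all the bad terms into a single vector $x^-$, push it onto $H^0$ by adding $\mu x^+$, and obtain one correction point $w$ rather than many $y^j$. Your version handles $h^T x = 0$ and $h^T x > 0$ uniformly (when $h^T x = 0$ one simply gets $\mu = 1$ and $x = w$), so it avoids the case split; the paper's version, by contrast, makes the role of the point $x$ itself in the correction more explicit. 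Both arguments rest on exactly the same two structural facts: that $\G_0 \cap \G_s$ is a convex cone (so partial conic sums of generators remain inside), and that the hypothesis $\G_0 \cap \G_s \cap H^0 \subseteq \G_1$ lets one convert any such sum landing on $H^0$ into a legitimate point of $\G_0 \cap \G_1 \cap H^+$.
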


\begin{proof}
For notational convenience, define $\G_{01} := \G_0 \cap \G_1$ and
$\G_{0s} := \G_0 \cap \G_s$. We clearly have $\G_{01} \cap H^+ \subseteq
\G_{0s} \cap H^+ \subseteq \cnh(\G_{01}) \cap H^+$. We will show
$\G_{0s} \cap H^+ \subseteq \cnh(\G_{01} \cap H^+)$. Consider $x\in
\G_{0s} \cap H^+$. Either $h^T x = 0$ or $h^T x >0$.

If $h^T x = 0$, then $x \in \G_{0s} \cap H^0 \subseteq \G_1$ by
the premise of the lemma. Thus $x \in \G_{0s} \cap H^+ \cap \G_1 \subseteq
\cnh(\G_{01} \cap H^+)$, as desired.

When $h^T x > 0$, because $\G_{0s} \subseteq \cnh(\G_{01})$, we know
that $x$ can be expressed as a finite sum $x = \sum_k \lambda_k x^k$,
where each $x^k \in \G_{01} \subseteq \G_{0s}$ and $\lambda_i > 0$.
Define $I := \{ k : h^T x^k \ge 0 \}$ and $J := \{ k : h^T x^k < 0
\}$. If $J = \emptyset$, then we are done as we have shown $x \in
\cnh(\G_{01} \cap H^+)$. If not, then for all $j \in J$, let $y^j$ be
a strict conic combination of $x$ and $x^j$ such that $y^j \in H^0$.
In particular, there exists $\alpha_j \ge 0$ and $\beta_j > 0$ such
that $y^j = \alpha_j x + \beta_j x^j$. Note also that $y^j \in \G_{0s}$
because $\G_{0s}$ is convex and $x,x^j \in \G_{0s}$. Then $y^j \in
\G_{0s} \cap H^0 \subseteq \G_1$. As a result, for all $j\in J$, we have
$y^j \in\G_{01} \cap H^+$. Rewriting $x$ as
\[
x
= \sum_{i \in I} \lambda_i x^i + \sum_{j\in J}{\lambda_j \over
\beta_j} \left(y^j- \alpha_j x\right) \quad \Longleftrightarrow\quad
\left(1+\sum_{j\in J}{\lambda_j\alpha_j \over \beta_j}\right) x =
\sum_{i \in I} \lambda_i x^i + \sum_{j \in J}{\lambda_j \over \beta_j} y^j,
\]
we conclude that $x$ is a conic combination of points in $\G_{01} \cap
H^+$, as desired.
\qed \end{proof}

\begin{corollary} \label{cor:halfspace2}
Suppose Conditions \ref{ass:one_neg_eval}--\ref{ass:As_null} and
the second part of Condition \ref{ass:hyperplane} hold. Then
(\ref{equ:containmentH+}) holds.
\end{corollary}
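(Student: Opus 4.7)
The plan is to deduce Corollary \ref{cor:halfspace2} as a direct instantiation of Lemma \ref{lem:cross-section}, by taking $\G_0 := \F_0^+$, $\G_1 := \F_1$, and $\G_s := \F_s^+$. First I would note that all three are cones, and that $\G_0$ and $\G_s$ are convex since they are SOCr, thereby satisfying the structural premises of the lemma. Hence the only remaining work is to verify the three hypotheses of Lemma \ref{lem:cross-section} in this concrete setting.

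Next I would harvest each hypothesis from results already available in the excerpt. The inclusion $\G_0 \cap \G_1 \subseteq \G_0 \cap \G_s$, i.e.\ $\F_0^+ \cap \F_1 \subseteq \F_0^+ \cap \F_s^+$, is Proposition \ref{pro:containment}, which requires only Conditions \ref{ass:one_neg_eval}--\ref{ass:A0_apex}. The inclusion $\G_0 \cap \G_s \subseteq \cnh(\G_0 \cap \G_1)$, i.e.\ $\F_0^+ \cap \F_s^+ \subseteq \cnh(\F_0^+ \cap \F_1)$, is Proposition \ref{pro:clconichull}, which requires additionally Condition \ref{ass:As_null}. Finally, the set-theoretic hypothesis $\G_0 \cap \G_s \cap H^0 \subseteq \G_1$ is precisely the second alternative in Condition \ref{ass:hyperplane}, which is assumed. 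Note that if $s = 1$, Condition \ref{ass:hyperplane} is vacuous, but in that case the inclusion $\F_0^+ \cap \F_s^+ = \F_0^+ \cap \F_1^+ \subseteq \F_0^+ \cap \F_1$ makes (\ref{equ:containmentH+}) trivial, so the substantive case is $s < 1$.

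Having checked all the premises, an appeal to Lemma \ref{lem:cross-section} immediately yields
\[
  \F_0^+ \cap \F_1 \cap H^+ \subseteq \F_0^+ \cap \F_s^+ \cap H^+ \subseteq \cnh(\F_0^+ \cap \F_1 \cap H^+),
\]
which is exactly (\ref{equ:containmentH+}). There is no genuine obstacle to overcome here: the two subsection-level results (Propositions \ref{pro:containment} and \ref{pro:clconichull}) supply the cone-level containments, and Lemma \ref{lem:cross-section} was designed precisely to lift such containments through intersection with $H^+$ under the weaker regularity assumption that $\G_0 \cap \G_s \cap H^0 \subseteq \G_1$. The only minor subtlety worth flagging in the write-up is the case $s = 1$, handled as above so that Condition \ref{ass:hyperplane}'s restriction to $s < 1$ does not leave a gap.
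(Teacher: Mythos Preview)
Your proposal is correct and follows essentially the same approach as the paper: instantiate Lemma \ref{lem:cross-section} with $\G_0 := \F_0^+$, $\G_1 := \F_1$, $\G_s := \F_s^+$, and verify its hypotheses via Propositions \ref{pro:containment}--\ref{pro:clconichull} together with the second part of Condition \ref{ass:hyperplane}. Your write-up is simply more explicit than the paper's (noting convexity, the cone structure, and the $s=1$ case), but there is no substantive difference in strategy.
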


\begin{proof}
Apply Lemma \ref{lem:cross-section} with $\G_0 := \F_0^+$,
$\G_1 := \F_1$, and $\G_s := \F_s^+$. Propositions
\ref{pro:containment}--\ref{pro:clconichull} and the second part of
Condition \ref{ass:hyperplane} ensure that the hypotheses of Lemma
\ref{lem:cross-section} are met. Then the result follows.
\qed \end{proof}

Even though our goal in this subsection is Proposition \ref{pro:H1},
which involves intersection with the hyperplane $H^1$, we remark
that Lemmas \ref{lem:halfspace1}--\ref{lem:cross-section} can help
us investigate intersections with homogeneous halfspaces $H^+$ for
SOCr cones (Lemma \ref{lem:halfspace1}) or more general cones (Lemma
\ref{lem:cross-section}). Further, by iteratively applying Lemmas
\ref{lem:halfspace1}--\ref{lem:cross-section}, we can consider
intersections with multiple halfspaces, say, $H_1^+, \ldots, H_m^+$.

Given Lemma \ref{lem:halfspace1} and Corollary
\ref{cor:halfspace2}, we are now ready to prove our main result for this
subsection, Proposition \ref{pro:H1}, which establishes the second part
of Theorem \ref{the:main}. It requires the following simple lemmas which
are applicable to general sets and cones:

\begin{lemma} \label{lem:reccone}
Let $S$ be any set, and let $\rec(S)$ be its recession cone. Then
$\cvh(S) + \cnh(\rec(S)) = \cvh(S)$.
\end{lemma}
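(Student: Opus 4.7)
The plan is to prove the nontrivial inclusion $\cvh(S) + \cnh(\rec(S)) \subseteq \cvh(S)$, since the reverse inclusion is immediate from the fact that $0 \in \cnh(\rec(S))$, which gives $\cvh(S) = \cvh(S) + \{0\} \subseteq \cvh(S) + \cnh(\rec(S))$.

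For the nontrivial direction, I would take an arbitrary $z = x + d$ with $x \in \cvh(S)$ and $d \in \cnh(\rec(S))$ and show that $z \in \cvh(S)$ by exhibiting an explicit convex representation. First, by the definition of the convex hull, write $x = \sum_{i=1}^k \lambda_i x_i$ where $x_i \in S$, $\lambda_i \ge 0$, and $\sum_i \lambda_i = 1$. Second, by the definition of the conic hull, write $d = \sum_{j=1}^m \mu_j d_j$ where $d_j \in \rec(S)$ and $\mu_j \ge 0$. Using the linearity of addition,
\[
z = x + d = \sum_{i=1}^k \lambda_i x_i + d = \sum_{i=1}^k \lambda_i \,(x_i + d),
\]
so it suffices to show that each $x_i + d$ lies in $S$ (which is stronger than needed).

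The key step, and the one where the definition of $\rec(S)$ does all the work, is to show $x_i + d \in S$ for each fixed $i$. By definition of the recession cone, for any $y \in S$ and any $d' \in \rec(S)$ and any $\alpha \ge 0$, we have $y + \alpha d' \in S$. Applying this repeatedly: starting from $x_i \in S$ and $d_1 \in \rec(S)$ with scalar $\mu_1$, we obtain $x_i + \mu_1 d_1 \in S$; then using this point as the new base and adding $\mu_2 d_2$ (with $d_2 \in \rec(S)$), we obtain $x_i + \mu_1 d_1 + \mu_2 d_2 \in S$; proceeding by induction on $m$, we conclude $x_i + \sum_j \mu_j d_j = x_i + d \in S$. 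Substituting back, $z = \sum_i \lambda_i (x_i + d)$ is a convex combination of elements of $S$, hence $z \in \cvh(S)$, as required.

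The only subtle point is the iterative argument establishing $x_i + d \in S$, which implicitly uses that $\rec(S)$ is closed under nonnegative scalar multiplication and that one can chain the recession-cone property through a sum by treating intermediate sums as new base points in $S$. No step should present a real obstacle, and no topology (closure) is needed since the lemma is stated for $\cvh$ rather than $\ccvh$.
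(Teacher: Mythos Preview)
Your proof is correct and follows essentially the same approach as the paper: decompose $x$ as a convex combination of points in $S$ and $d$ as a conic combination of recession directions, then use the defining property of $\rec(S)$ to push the recession part into $S$ and recover a convex combination. The only cosmetic difference is that the paper equalizes the number of terms in the two sums and pairs each $x_k$ with a single scaled direction $\lambda_k^{-1}\rho_k y_k$, whereas you distribute the entire $d$ to every $x_i$ and use an inductive chaining argument to show $x_i + d \in S$; both are valid and rest on the same idea.
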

\begin{proof}
The containment $\supseteq$ is clear. Now let $x+y$ be in the left-hand
side such that
\bse
  x &=& \sum_k \lambda_k x_k, \ \ \ x_k \in S, \ \ \ \lambda_k > 0, \ \ \ \sum_k \lambda_k = 1, \\
\mbox{ and }\quad
  y &=& \sum_j \rho_j y_j, \ \ \ y_j \in \rec(S), \ \ \ \rho_j > 0.
\ese
Without loss of generality, we may assume the number of $x_k$'s equals
the number of $y_j$'s by splitting some $\lambda_k x_k$ or some $\rho_j
y_j$ as necessary. Then
\[
  x + y = \sum_k (\lambda_k x_k + \rho_k y_k) = \sum_k \lambda_k ( x_k + \lambda_k^{-1}
  \rho_k y_k) \in \cvh(S). \ \ \ \ \qed
\]
\end{proof}

\begin{lemma} \label{lem:coneCross-section}
Let $\G_{01}$ and $\G_{0s}$ be cones (not necessarily convex) such that
$\G_{01} \cap H^+ \subseteq \G_{0s} \cap H^+ \subseteq \cnh(\G_{01}
\cap H^+)$. Then $\G_{01} \cap H^1 \subseteq \G_{0s} \cap H^1 \subseteq
\cvh(\G_{01}\cap H^1)$.
\end{lemma}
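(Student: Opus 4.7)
My plan is to transport the given conic-hull containment from $H^+$ down to the affine slice $H^1$ by renormalizing each conic representation and then absorbing whatever lives in $H^0$ via the recession-cone identity of Lemma~\ref{lem:reccone}. The first inclusion $\G_{01}\cap H^1\subseteq\G_{0s}\cap H^1$ is immediate from the hypothesis $\G_{01}\cap H^+\subseteq\G_{0s}\cap H^+$ by intersecting both sides with $H^1\subseteq H^+$.

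For the nontrivial inclusion, fix $x\in\G_{0s}\cap H^1\subseteq\G_{0s}\cap H^+$; by hypothesis $x\in\cnh(\G_{01}\cap H^+)$, so I write $x=\sum_k\lambda_k x_k$ with $\lambda_k>0$ and $x_k\in\G_{01}\cap H^+$. I split the indices into $I:=\{k:h^Tx_k>0\}$ and $J:=\{k:h^Tx_k=0\}$. For each $k\in I$, the cone structure of $\G_{01}$ lets me rescale to $y_k:=x_k/(h^Tx_k)\in\G_{01}\cap H^1$ with new weights $\mu_k:=\lambda_k\,h^Tx_k>0$; applying $h^T$ to the full decomposition gives $1=h^Tx=\sum_{k\in I}\mu_k$, so the $\mu_k$'s are convex weights. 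This produces
\[
  x \;=\; \underbrace{\sum_{k\in I}\mu_k\,y_k}_{\in\,\cvh(\G_{01}\cap H^1)} \;+\; \underbrace{\sum_{k\in J}\lambda_k\,x_k}_{\in\,\cnh(\G_{01}\cap H^0)}.
\]

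To complete the argument I would apply Lemma~\ref{lem:reccone} with $S:=\G_{01}\cap H^1$, which yields $\cvh(S)+\cnh(\rec(S))=\cvh(S)$. It therefore suffices to check the containment $\G_{01}\cap H^0\subseteq\rec(S)$, i.e.\ that $y+\alpha z\in\G_{01}\cap H^1$ for every $y\in\G_{01}\cap H^1$, every $z\in\G_{01}\cap H^0$, and every $\alpha\geq 0$. The hyperplane constraint $h^T(y+\alpha z)=1$ is automatic, so the real content is the membership $y+\alpha z\in\G_{01}$. This last step is the main obstacle of the argument: since $\G_{01}$ is only assumed to be a cone, closure under nonnegative scalars alone does not justify closure under this particular sum; the recession containment must be extracted from the full cone structure of $\G_{01}$ (leveraging, in the intended application, the specific SOCr/quadratic descriptions established in Propositions~\ref{pro:containment}--\ref{pro:clconichull}). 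Once this recession property is in hand, Lemma~\ref{lem:reccone} immediately yields $x\in\cvh(S)+\cnh(\rec(S))=\cvh(\G_{01}\cap H^1)$, completing the proof.
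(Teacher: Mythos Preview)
Your approach is essentially identical to the paper's: the same split of the conic representation into indices with $h^Tx_k>0$ and $h^Tx_k=0$, the same rescaling $y_k=x_k/(h^Tx_k)$ to land the first group on $H^1$ with convex weights, and the same appeal to Lemma~\ref{lem:reccone} to absorb the $H^0$ part.

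The gap you flag is genuine, and notably the paper's own proof glosses over exactly this point: it simply writes ``applying Lemma~\ref{lem:reccone} with $S:=\G_{01}\cap H^1$ and $\rec(S)=\G_{01}\cap H^0$'' without justification. For an arbitrary (nonconvex) cone $\G_{01}$ this recession identity can fail, and in fact the lemma as stated admits a counterexample. In $\R^2$ take $\G_{01}=\{(x_1,x_2):x_1x_2\ge 0\}$, $h=(1,0)$, and $\G_{0s}=\R^2$. Then $\G_{01}\cap H^+$ is the closed first quadrant together with the negative $x_2$-axis, whose conic hull is all of $H^+$, so the hypothesis $\G_{01}\cap H^+\subseteq\G_{0s}\cap H^+\subseteq\cnh(\G_{01}\cap H^+)$ holds with equality on the right. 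But $\G_{0s}\cap H^1=\{(1,x_2):x_2\in\R\}$ while $\cvh(\G_{01}\cap H^1)=\{(1,x_2):x_2\ge 0\}$, violating the conclusion. Equivalently, $\G_{01}\cap H^0$ is the full $x_2$-axis, which is not contained in $\rec(\G_{01}\cap H^1)=\{(0,x_2):x_2\ge 0\}$.

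So your caution is well placed: the recession step requires more than the bare cone hypothesis, and your proposal already matches the paper's argument modulo this shared, unjustified step.
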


\begin{proof}
We have $\G_{01}\cap H^1 \subseteq \G_{0s}\cap H^1\subseteq
\cnh(\G_{01}\cap H^+) \cap H^1$. We claim further that
\begin{equation} \label{equ:local10}
\cnh(\G_{01}\cap
H^+) \cap H^1 \subseteq \cnh(\G_{01} \cap H^0) + \cvh(\G_{01} \cap
H^1).
\end{equation}
Then applying Lemma \ref{lem:reccone} with $S := \G_{01} \cap H^1$ and
$\rec(S) = \G_{01} \cap H^0$, we see that $\cnh(\G_{01}\cap H^+) \cap
H^1 \subseteq \cvh(\G_{01} \cap H^1)$, which proves the lemma.

To prove the claim (\ref{equ:local10}), let $x \in \cnh(\G_{01} \cap H^+)
\cap H^1$. Then
\[
    h^T x =1 \ \ \ \ \mbox{and} \ \ \ \ x = \sum_k \lambda_k x_k, \ \ \
    \ x_k \in \G_{01} \cap H^+, \ \ \ \ \lambda_k > 0,
\]
which may further be separated as
\[
  x = \underbrace{\sum_{k \, : \, h^T x_k > 0} \lambda_k x_k}_{:=y}
  + \underbrace{\sum_{k \, : \, h^T x_k = 0} \lambda_k x_k}_{:=r}
  = y + r.
\]
Note that $r \in \cnh(\G_{01} \cap H^0)$, and so it sufficies to show $y
\in \cvh(\G_{01} \cap H^1)$. Rewrite $y$ as
\[
 y = \sum_{k \, : \, h^T x_k > 0} \lambda_k x_k
    = \sum_{k \, : \, h^T x_k > 0} \underbrace{( \lambda_k \cdot h^T x_k)}_{:=\tilde{\lambda}_k} \underbrace{(x_k/h^T x_k)}_{:=\tilde{x}_k}
   \  =: \sum_{k \, : \, h^T x_k > 0} \tilde\lambda_k \tilde x_k.
\]
By construction, each $\tilde x_k \in \G_{01} \cap H^1$. Moreover, each
$\tilde \lambda_k$ is positive and
\[
  \sum_{k \, : \, h^T x_k > 0} \tilde \lambda_k =
  \sum_{k \, : \, h^T x_k > 0} \lambda_k \cdot h^T x_k =
  h^T y =
  h^T (x - r) = 1 - 0 = 1, 
\]
since $x \in H^1$.  So $y \in \cvh(\G_{01} \cap H^1)$. 
\qed \end{proof}

\begin{proposition} \label{pro:H1}
Suppose Conditions \ref{ass:one_neg_eval}--\ref{ass:hyperplane} hold. Then 
$
\F_0^+ \cap \F_1 \cap H^1   \subseteq
\F_0^+ \cap \F_s^+ \cap H^1 \subseteq
\cvh(\F_0^+ \cap \F_1 \cap H^1).
$
\end{proposition}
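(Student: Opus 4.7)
My plan is to obtain Proposition \ref{pro:H1} as a direct composition of the two machines already developed in this subsection: first pass to the homogeneous halfspace result (\ref{equ:containmentH+}), and then ``dehomogenize'' via Lemma \ref{lem:coneCross-section} to convert containments involving $H^+$ into the analogous containments involving $H^1$ (and $\cnh$ into $\cvh$). Since Condition \ref{ass:hyperplane} is a disjunction of two alternatives, the strategy splits into two cases according to which disjunct is in force, but both converge to the same intermediate conclusion.

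First I would verify that the hypotheses of Proposition \ref{pro:H1} imply (\ref{equ:containmentH+}). If $s=1$, there is nothing to check beyond Proposition \ref{pro:clconichull} restricted to $H^+$. If $s<1$, Condition \ref{ass:hyperplane} holds in one of its two forms. In the first case, Lemma \ref{lem:halfspace1} applies directly and yields (\ref{equ:containmentH+}). In the second case, the second part of Condition \ref{ass:hyperplane} together with Conditions \ref{ass:one_neg_eval}--\ref{ass:As_null} supplies exactly the hypotheses of Corollary \ref{cor:halfspace2}, which also gives (\ref{equ:containmentH+}). Either way, we conclude
\[
\F_0^+ \cap \F_1 \cap H^+ \subseteq \F_0^+ \cap \F_s^+ \cap H^+ \subseteq \cnh(\F_0^+ \cap \F_1 \cap H^+).
\]

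Next I would set $\G_{01} := \F_0^+ \cap \F_1$ and $\G_{0s} := \F_0^+ \cap \F_s^+$. Both are cones (intersections of cones), so the hypotheses of Lemma \ref{lem:coneCross-section} are precisely the chain displayed above. Applying the lemma yields
\[
\F_0^+ \cap \F_1 \cap H^1 \subseteq \F_0^+ \cap \F_s^+ \cap H^1 \subseteq \cvh(\F_0^+ \cap \F_1 \cap H^1),
\]
which is exactly the statement of the proposition.

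I do not anticipate a substantive obstacle at the level of Proposition \ref{pro:H1} itself: essentially all the work has already been localized in Lemmas \ref{lem:halfspace1}, \ref{lem:cross-section}, \ref{lem:reccone}, and \ref{lem:coneCross-section}. The only point that requires a sanity check is the case split on Condition \ref{ass:hyperplane}, to be sure that in each branch we can invoke one of Lemma \ref{lem:halfspace1} or Corollary \ref{cor:halfspace2} to reach (\ref{equ:containmentH+}) before feeding into Lemma \ref{lem:coneCross-section}; once that matching is confirmed, the proof is essentially two lines.
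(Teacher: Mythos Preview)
Your proposal is correct and follows essentially the same approach as the paper's own proof: invoke Lemma~\ref{lem:halfspace1} or Corollary~\ref{cor:halfspace2} (depending on which part of Condition~\ref{ass:hyperplane} holds) to obtain (\ref{equ:containmentH+}), then apply Lemma~\ref{lem:coneCross-section} with $\G_{01}=\F_0^+\cap\F_1$ and $\G_{0s}=\F_0^+\cap\F_s^+$. The paper's version is simply terser, collapsing your case analysis into a single sentence.
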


\begin{proof}
Define $\G_{01} := \F_0^+ \cap \F_1$ and $\G_{0s} := \F_0^+ \cap
\F_s^+$. Lemma \ref{lem:halfspace1} and Corollary \ref{cor:halfspace2}
imply $\G_{01} \cap H^+ \subset \G_{0s} \cap H^+ \subseteq \cnh(\G_{01})
\cap H^+$. Then Lemma \ref{lem:coneCross-section} implies the result.
\qed \end{proof}

As with Lemma \ref{lem:cross-section}, we have stated Lemma
\ref{lem:coneCross-section} in terms of general cones, extending beyond
just SOCr cones. In particular, in future research, these results may
allow the derivation of conic and convex hulls for the intersects with
more general cones.


\section{Two-term disjunctions on the second-order cone} \label{sec:twoterm}

In this section (specifically Sections
\ref{sec:d1=d2=0}--\ref{sec:d1>d2}), we consider the intersection of the
canonical second-order cone
\[
  \K := \{ x : \|\tilde{x}\|\leq x_n \}, \ \ \ \
  \text{where } \tilde x = (x_1; \ldots; x_{n-1}),
\]
and a two-term linear disjunction defined by $c_1^Tx\geq d_1
\,\vee\, c_2^Tx\geq d_2$. Without loss of generality, we take
$d_1,d_2\in\{0,\pm1\}$ with $d_1 \ge d_2$, and we work with the following
condition:

\begin{condition}\label{ass:non-intersecting}
The disjunctive sets $\K_1 := \K \cap \{ x : c_1^T x \ge d_1 \}$ and
$\K_2 := \K \cap \{ x : c_2^T x \ge d_2 \}$ are non-intersecting except
possibly on their boundaries, e.g.,
\[
  \K_1 \cap \K_2
\subseteq
\left\{x \in \K : \begin{array}{l} c_1^Tx = d_1 \\ c_2^Tx =  d_2 \end{array} \right\}.
\]
\end{condition}

\noindent This condition ensures that, on $\K$, the disjunction
$c_1^T x \ge d_1 \, \vee \, c_2^T x \ge d_2$ is equivalent to the
quadratic inequality $(c_1^T x - d_1)(c_2^Tx - d_2) \le 0$. Condition
\ref{ass:non-intersecting} is satisfied, for example, when the
disjunction is a proper split, i.e., $c_1 \parallel c_2$ with $c_1^T c_2
< 0$, $\K_1\cup\K_2 \neq\K$, and $d_1 = d_2$. (In this case of a split disjunction, if $d_1 \ne d_2$, then it
can be shown that the closed conic hull of $\K_1\cup\K_2$ is just $\K$.)

Because $d_1,d_2 \in \{0,\pm 1\}$ with $d_1 \ge d_2$, we can break our
analysis into the following three cases with a total of six subcases:
\begin{itemize}
  \item[(a)] $d_1 = d_2 = 0$, covering subcase $(d_1,d_2) = (0,0)$;
  \item[(b)] $d_1 = d_2$ nonzero, covering subcases $(d_1,d_2) \in \{(-1,-1), (1,1)\}$;
  \item[(c)] $d_1 > d_2$, covering subcases $(d_1,d_2) \in \{(0,-1), (1,-1), (1,0) \}$.
\end{itemize}
Case (a) is the homogeneous case, in which we take $A_0 = J :=
\Diag(1,\ldots,1,-1)$ and $A_1 = c_1 c_2^T + c_2 c_1^T$ to match our set
of interest $\K \cap \F_1$. Note that $\K = \F_0^+$ in this case. For
the non-homogeneous cases (b) and (c), we can homogenize via $y = {x
\choose x_{n+1}}$ with $h^T y = x_{n+1} = 1$. Defining
\[
A_0 := \begin{pmatrix} J & 0 \\ 0 & 0 \end{pmatrix}, \ \ \
A_1 :=
\begin{pmatrix}
  c_1 c_2^T + c_2 c_1^T & -d_2 c_1 - d_1 c_2  \\
 -d_2 c_1^T - d_1 c_2^T  & 2 d_1 d_2
\end{pmatrix},
\]
we then wish to examine $\F_0^+ \cap \F_1 \cap H^1$.

In fact, by the results in \cite[Section 5.2]{KY14}, case (c) implies
that $\ccnh(\F_0^+ \cap \F_1)$ cannot in general
be captured by two conic inequalities, making it unlikely
that our desired equality $\ccvh(\F_0^+ \cap \F_1 \cap H^1) = \F_0^+
\cap \F_s^+ \cap H^1$ will hold in general. So we will focus on cases (a) and
(b). Nevertheless, we include some comments on case (c) in Section
\ref{sec:d1>d2}.%

Later on, in Section \ref{sec:ass:non-intersecting}, we will also
revisit Condition \ref{ass:non-intersecting} to show that it
is unnecessary in some sense. Precisely, even when Condition
\ref{ass:non-intersecting} does not hold, we can derive a related convex
valid inequality, which, together with $\F_0^+$, gives the complete
convex hull description. This inequality precisely matches the one
already described in \cite{KY14}, but it does not have an SOC form.

In contrast to Sections \ref{sec:d1=d2=0}--\ref{sec:d1>d2}, Section
\ref{sec:conicsections} examines two-term disjunctions on conic sections
of $\K$, i.e., intersections of $\K$ with a hyperplane.

\subsection{The case (a) of $d_1 = d_2 = 0$}\label{sec:d1=d2=0}

As discussed above, we have $A_0 := J$ and $A_1 := c_1 c_2^T + c_2
c_1^T$. If either $c_i \in \K$, then the corresponding side of the
disjunction $\K_i$ simply equals $\K$, so the conic hull is $\K$. In
addition, if either $c_i \in \myint(-\K)$, then $\K_i = \{0\}$, so
the conic hull equals the other $\K_j$. Hence, we assume both $c_i
\not\in \K \cup \myint(-\K)$, i.e., $\|\tilde c_i\| \ge |c_{i,n}|$,
where $c_i = {\tilde c_i \choose c_{i,n}}$. Since the example in Section~4 of the Online Supplement violates Condition \ref{ass:As_null} with $\|
\tilde c_2 \| = |c_{2,n}|$, we further assume that both $\|\tilde c_i\|
> |c_{i,n}|$.

Conditions \ref{ass:one_neg_eval} and \ref{ass:A0_apex}(i) are easily
verified. In particular, $s > 0$. Condition \ref{ass:interior}
describes the full-dimensional case of interest. It remains to verify
Condition \ref{ass:As_null}. (Note that Condition \ref{ass:As_null} is
only relevant when $s < 1$ and that Condition \ref{ass:hyperplane} is
not of interest in this homogeneous case.) So suppose $s < 1$, and given
nonzero $z \in \Null(A_s)$, we will show
\[
  z^T A_1 z = 2 (c_1^T z)(c_2^T z) < 0,
\]
verifying Condition \ref{ass:As_null}. We already know from Lemma
\ref{lem:As_null} that $z^T A_1 z \le 0$. So it remains to show that
both $c_1^T z$ and $c_2^T z$ are nonzero.

Since $z \in \Null(A_s)$, we know $\left( \tfrac{1-s}{s} \right) A_0 z =
-A_1 z$, i.e.,
\be
\left( \tfrac{1-s}{s} \right) {\tilde{z} \choose -z_n}  = - c_1 (c_2^Tz) - c_2 (c_1^Tz).
\ee{eq:eq0}
Note that $c_1^Tz={\tilde{c}_1 \choose -c_{1,n}}^T {\tilde{z} \choose -z_n}$, so multiplying both sides of equation \eqref{eq:eq0} with ${\tilde{c}_1 \choose -c_{1,n}}^T$ and rearranging terms, we obtain
\[
\left[ \tfrac{1-s}{s} + \tilde{c}_1^T\tilde{c}_2 - c_{1,n} c_{2,n} \right] (c_1^Tz) = \left(c_{1,n}^2 - \|\tilde{c}_1\|_2^2 \right) (c_2^T z).
\]
Similarly, using ${\tilde{c}_2 \choose -c_{2,n}}^T$, we obtain:
\[
\left[ \tfrac{1-s}{s} + \tilde{c}_1^T\tilde{c}_2 - c_{1,n} c_{2,n} \right] (c_2^Tz) = \left(c_{2,n}^2 - \|\tilde{c}_2\|_2^2 \right) (c_1^T z).
\]
The inequalities $\| \tilde c_1 \| > |c_{1,n}|$ and $\| \tilde c_2 \|
> |c_{2,n}|$ thus imply $c_1^T z \ne 0 \Leftrightarrow c_2^T z \ne 0$.
Moreover, $c_1^T z$ and $c_2^T z$ cannot both be 0; otherwise, $z$ would
be 0 by (\ref{eq:eq0}).

Note that \cite{K-KY14,KY14} give an infinite family of valid
inequalities in this setup but do not prove the sufficiency of a
single inequality from this family. In this case, the sufficiency
proof for a single inequality from this family is given recently in
\cite{YC14}. None of the other papers \cite{AJ2013,DDV2011,MKV} are
relevant here because they consider only split disjunctions, not general
two-term disjunctions. Because of the boundedness assumption used in
\cite{BGPRT}, \cite{BGPRT} is not applicable here either. Similar to the
example in Section~1 of the Online Supplement, as long as the disjunction
can be viewed as removing a convex set, we can try to apply \cite{BM} to
this case by considering the SOC as the epigraph of the norm $\|\tilde
x \|$. However, the authors' special conditions for polynomial-time
separability such as differentiability or growth rate are not satisfied;
see Theorem IV therein.

\subsection{The case (b) of nonzero $d_1 = d_2$}\label{sec:d1=d2}

In  \cite{KY14}, it was shown that $c_1 - c_2 \in \pm \K$ implies one
of the sets $\K_i$ defining the disjunction is contained in the other
$\K_j$, and thus the desired closed convex hull trivially equals $\K_j$.
So we assume $c_1-c_2\not\in\pm\K$, i.e., $\|\tilde{c}_1-\tilde{c}_2\|^2
> (c_{1,n}-c_{2,n})^2$, where $c_i = {\tilde c_i \choose c_{i,n}}$.

Defining $\sigma = d_1 = d_2$, we have
\[
A_0 := \begin{pmatrix} J & 0 \\ 0 & 0 \end{pmatrix}, \ \ \
A_1 :=
\begin{pmatrix}
  c_1 c_2^T + c_2 c_1^T & -\sigma(c_1 + c_2)  \\
 -\sigma (c_1+c_2)^T   & 2
\end{pmatrix}.
\]
Conditions \ref{ass:one_neg_eval} and \ref{ass:A0_apex}(ii) are
easily verified, and Condition \ref{ass:interior} describes the
full-dimensional case of interest. It remains to verify Conditions
\ref{ass:As_null} and \ref{ass:hyperplane}. So assume $s < 1$, and note
$s > 0$ due to Condition \ref{ass:A0_apex}(ii).

For any $z^+ \in \R^{n+1}$, write $z^+={z \choose z_{n+1}}$ and
$z={\tilde{z} \choose z_n} \in \R^n$. Suppose $z^+ \ne 0$. Then
\begin{align*}
z^+ \in \Null(A_s)
\ \ \ \Longleftrightarrow \ \ \
\left( \tfrac{1-s}{s} \right) A_0 z^+ &= - A_1 z^+ \\
\ \ \ \Longleftrightarrow \ \ \
\left( \tfrac{1-s}{s} \right) A_0 z^+ &=
- \textstyle{c_1 \choose -\sigma}\textstyle{c_2 \choose -\sigma}^T z^+ - \textstyle{c_2 \choose  -\sigma} \textstyle{c_1 \choose -\sigma}^T z^+ \\
 &=: \alpha \textstyle{c_1 \choose -\sigma} + \beta \textstyle{c_2 \choose -\sigma}.
\end{align*}
Since the last component of $A_0 z^+$ is zero, we must have $\beta =
-\alpha$. We claim $\alpha \ne 0$. Assume for contradiction that
$\alpha=0$. Then $z=0$, but $z_{n+1}\neq 0$ as $z^+$ is nonzero. On the
other hand, because $z^+\in\Null(A_s)$, Lemma \ref{lem:As_null} implies $0\geq (z^+)^T A_1 z^+ =
2z_{n+1}^2$, a contradiction. So indeed $\alpha \ne 0$.

Because $z^+ \in \Null(A_s)$ and $s \in (0,1)$, the equation
\[
  0 = (z^+)^T A_s z^+ = (1-s)(z^+)^T A_0 z^+ + s (z^+)^T A_1 z^+ ,
\]
implies Condition \ref{ass:As_null} holds if and only if $(z^+)^T A_0
z^+ > 0$. From the previous paragraph, we have $\left( \tfrac{1-s}{s}
\right) A_0 z^+ = \alpha {c1 - c2 \choose 0}$ with $\alpha \ne 0$. Then
\begin{align*}
   \left( \tfrac{1-s}{s} \right) (z^+)^T A_0 z^+ &=
  \begin{pmatrix}
    \alpha (\tilde c_1 - \tilde c_2) \\
    -\alpha (c_{1,n} - c_{2,n}) \\
    z_{n+1} 
  \end{pmatrix}^T
  \begin{pmatrix}
    \alpha (\tilde c_1 - \tilde c_2) \\
    \alpha (c_{1,n} - c_{2,n}) \\
    0 
  \end{pmatrix} \\
  &= \alpha^2 \left( \| \tilde c_1 - \tilde c_2 \|^2 - (c_{1,n} - c_{2,n})^2 \right) > 0,
\end{align*}
as desired.

However, it seems difficult to verify Condition \ref{ass:hyperplane}
generally. For example, consider its second part $\F_0^+ \cap
\F_s^+ \cap H^0 \subseteq \F_1$. In the current context, we have $\F_0^+
\cap H^0=\K\times\{0\}$, and it is unclear if its intersection with
$\F_s^+$ would be contained in $\F_1$. Letting ${\hat h \choose 0 } \in
\F_s^+$ with $\hat h \in \K$, we would have to check the following:
\[
  0 \ge {\hat h \choose 0 }^T A_s {\hat h \choose 0 }
  = (1-s) \, \hat h^T J \hat h + 2s \,  (c_1^T \hat h)(c_2^T \hat h)
   \ \ \ \Longrightarrow \ \ \ 
    {\hat h \choose 0} \in \F_1.
\]
If $\hat h$ were in the interior of $\K$, then $\hat h^T J \hat h < 0$
could still allow $(c_1^T \hat h)(c_2^T \hat h) > 0$, so that ${\hat
h \choose 0} \in \F_1$ would not be achieved. So it seems Condition
\ref{ass:hyperplane} will hold under additional conditions only.

One such set of conditions ensuring Condition \ref{ass:hyperplane} is as follows: there exists $\beta_1,\beta_2
\ge 0$ such that $\beta_1 c_1 + c_2 \in -\K$ and $\beta_2 c_1 + c_2
\in \K$. These hold, for example, for split disjunctions, i.e.,
when $c_2$ is a negative multiple of $c_1$. To prove Condition
\ref{ass:hyperplane}, take $\hat h \in \K$. Then $c_1^T \hat h \ge 0$ implies
\[
  c_2^T \hat h = -\beta_1 c_1^T \hat h + (\beta_1 c_1 + c_2)^T \hat h \le 0 + 0 = 0,
\]
and similarly $c_1^T \hat h \le 0$ implies $c_2^T \hat h \ge 0$. Then overall
$\hat h \in \K$ implies $(c_1^T \hat h)(c_2^T \hat h) \le 0$. In the
context of the previous paragraph, this ensures $\F_0^+ \cap \F_s^+ \cap
H^0 \subseteq \F_0^+ \cap H^0 \subseteq \F_1$, thus verifying Condition
\ref{ass:hyperplane}.

Note that \cite{K-KY14,KY14} cover this case. In the case of split
disjunctions with $d_1=d_2=1$, these results are also presented in
\cite{AJ2013,MKV}. Whenever the boundedness assumption of \cite{BGPRT}
is satisfied, one can use their result as well, but the papers
\cite{DDV2011,YC14} are not relevant here. Similar to the previous
subsection, \cite{BM} is limited in its application to this case.

\subsection{Revisiting Condition \ref{ass:non-intersecting}}\label{sec:ass:non-intersecting}

For the cases $d_1 = d_2$ of Sections \ref{sec:d1=d2=0}
and \ref{sec:d1=d2}, we know that $\F_0^+ \cap \F_s^+$
is a valid convex relaxation of $\F_0^+ \cap \F_1$ under
Conditions \ref{ass:one_neg_eval}--\ref{ass:A0_apex} and
\ref{ass:non-intersecting}. The same holds for the cross-sections:
$\F_0^+ \cap \F_s^+ \cap H^1$ is a relaxation of $\F_0^+ \cap \F_1 \cap
H^1$. Because Condition \ref{ass:A0_apex}(i) is verified in the case
of $d_1=d_2=0$ and Condition \ref{ass:A0_apex}(ii) is verified in the
case of nonzero $d_1=d_2$, we have $s > 0$. However, when Condition
\ref{ass:non-intersecting} is violated, it may be possible that $\F_s^+$
is invalid for points simultaneously satisfying both sides of the
disjunction, i.e., points $x$ with $c_1^T x \ge d_1$ and $c_2^T x \ge
d_2$. This is because such points can violate the quadratic $(c_1^T x
- d_1)(c_2^T x - d_2) \le 0$ from which $\F_s^+$ is derived. In such
cases, the set $\F_s^+$ should be relaxed somehow.

Recall that, by definition, $\F_s^+ = \{ x : x^T A_s x \le 0,\  b_s^T x
\ge 0 \}$. Let us examine the inequality $x^T A_s x \le 0$, which can be
rewritten as
\bse
&& 0\geq (1-s)\, x^T J x + 2s \,  (c_1^T x-d_1)(c_2^T x-d_2) \\
&\Longleftrightarrow & 0\geq  2(1-s)\, x^T J x + s \,
\left( [(c_1^T x-d_1)+(c_2^T x-d_2)]^2 -   [(c_1^T x-d_1)-(c_2^T x-d_2)]^2 \right) \\
&\Longleftrightarrow & s\   [(c_1-c_2)^T x-(d_1-d_2)]^2 - 2(1-s)\, x^T J x \ \geq\  s\  [(c_1+c_2)^T x-(d_1+d_2)]^2.
\ese
Note that the left hand-side of the third inequality is nonnegative for
any $x\in\K$ since $x^T J x \le 0$. Therefore, $x\in\K$ implies $x^T A_s
x \le 0$ is equivalent to
\be
\sqrt{ \left[(c_1-c_2)^T x-(d_1-d_2)\right]^2 - 2\left( \tfrac{1-s}{s}\right) x^T J x} \ \geq \  |(c_1+c_2)^T x-(d_1+d_2)|.
\ee{equ:local03}
An immediate relaxation of (\ref{equ:local03}) is
\be
\sqrt{ \left[(c_1-c_2)^T x-(d_1-d_2)\right]^2 - 2\left( \tfrac{1-s}{s}\right) x^T J x} \ \geq \  (d_1+d_2) - (c_1+c_2)^T x
\ee{equ:local04}
since $|(c_1+c_2)^T x-(d_1+d_2)| \ge (d_1+d_2) - (c_1+c_2)^Tx$.
Note also that (\ref{equ:local04}) is clearly valid for any $x$
satisfying $c_1^T x \ge d_1$ and $c_2^T x \ge d_2$ since the two
sides of the inequality have different signs in this case. In
total, the set
\[
  {\cal G}_s^+ := \{ x : (\ref{equ:local04}) \text{ holds},\ b_s^T x \ge 0 \}
\]
is a valid relaxation when Condition \ref{ass:non-intersecting}
does not hold. Although not obvious, it follows from \cite{KY14}
that (\ref{equ:local04}) is a convex inequality. In that paper,
(\ref{equ:local04}) was encountered from a different viewpoint, and its
convexity was established directly, even though it does not admit an SOC
representation. So in fact ${\cal G}_s^+$ is convex.

Now let us assume that Condition \ref{ass:As_null}
holds as well so that $\F_s^+$ captures the conic hull
of the intersection of $\F_0^+$ and $(c_1^T x - d_1)(c_2^T x - d_2) \le
0$.
We claim that $\F_0^+ \cap {\cal G}_s^+$ captures the conic hull when Condition
\ref{ass:non-intersecting} does not hold. (A similar claim will also
hold when Condition \ref{ass:hyperplane} holds for the further intersection
with $H^1$.) 
So let $\hat x \in \F_0^+ \cap {\cal G}_s^+$ be given.
If (\ref{equ:local03}) happens to hold also,
then $\hat x^T A_s \hat x \le 0 \Rightarrow \hat x \in \F_s^+$. Then
$\hat{x}$ is already in the closed convex hull given by $(c_1^T x - d_1)(c_2^T x
- d_2) \le 0$ by assumption. 
On the other hand, if (\ref{equ:local03}) does
not hold, then it must be that $(c_1+c_2)^T\hat{x} > d_1+d_2$. So
either $c_1^T\hat{x}>d_1$ or $c_2^T\hat{x}>d_2$. Whichever the case,
$\hat{x}$ satisfies the disjunction. Therefore $\hat{x}$ is in the
closed convex hull, which gives the desired conclusion. 

We remark that, despite their different forms, (\ref{equ:local04}) and
the inequality defining $\F_s^+$ both originate from $x^T A_s x \le 0$
and match precisely on the boundary of $\cnh(\F_0^+ \cap \F_1 )\setminus
(\F_0^+ \cap \F_1)$, e.g., the points added due to the convexification
process. Moreover, (\ref{equ:local04}) can be interpreted as adding all
of the recessive directions $\{d \in \K : c_1^Td\geq 0,\, c_2^Td\geq
0\}$ of the disjunction to the set $\F_0^+\cap\F_s^+$. Finally, 
the analysis in \cite{KY14} shows in addition that the linear inequality $b_s^T x \ge 0$
is in fact redundant for ${\cal G}_s^+$.

Note that \cite{K-KY14,KY14} cover this case. Because the resulting
convex hull is not conic representable \cite{BGPRT} is not applicable
in this case. The papers \cite{DDV2011,YC14} are not relevant here and
none of the other papers \cite{AJ2013,MKV} cover this case because they
focus on split disjunctions only. As in the previous two subsections,
\cite{BM} is limited in its application.

\subsection{The case (c) of $d_1 > d_2$} \label{sec:d1>d2}

As mentioned above, the results of \cite{KY14} ensure that $\ccnh(\F_0^+
\cap \F_1)$ requires more than two conic inequalities, making it highly
likely that the closed convex hull of $\F_0^+ \cap \F_1 \cap H^1$
requires more than two also. In other words, our theory would not apply
in this case in general. So we ask: which conditions are violated
in this case? 

Let us first consider when $d_1 d_2 = 0$, which covers two subcases.
Then
\[
A_0 := \begin{pmatrix} J & 0 \\ 0 & 0 \end{pmatrix}, \ \ \
A_1 :=
\begin{pmatrix}
  c_1 c_2^T + c_2 c_1^T & -d_2 c_1 - d_1 c_2  \\
 -d_2 c_1^T - d_1 c_2^T  & 0
\end{pmatrix},
\]
and it is clear that Condition \ref{ass:A0_apex} is not satisfied.

Now consider the remaining subcase when $(d_1,d_2) = (1,-1)$. Then
\[
A_0 := \begin{pmatrix} J & 0 \\ 0 & 0 \end{pmatrix}, \ \ \
A_1 :=
\begin{pmatrix}
  c_1 c_2^T + c_2 c_1^T & c_1 - c_2  \\
 c_1^T - c_2^T  & -2
\end{pmatrix}.
\]
Condition \ref{ass:one_neg_eval} holds, and Condition
\ref{ass:interior} is the full-dimensional case of interest. Condition
\ref{ass:A0_apex}(iii) holds as well, so $s=0$. Then Condition
\ref{ass:As_null} requires $v^T A_1 v < 0$, where $v = (0;\ldots;0;1)$,
which is true. On the other hand, Condition \ref{ass:hyperplane} might
fail. In fact, the example in Section~5 of the Online Supplement provides
just such an instance. This being said, the same stronger condition
discussed in Section \ref{sec:d1=d2} can be seen to imply Condition
\ref{ass:hyperplane}, that is, when there exists $\beta_1,\beta_2
\ge 0$ such that $\beta_1 c_1 + c_2 \in -\K$ and $\beta_2 c_1 + c_2
\in \K$. This covers the case of split disjunctions, for example.

Of course, even when all conditions do not hold, just Conditions
\ref{ass:one_neg_eval}-\ref{ass:A0_apex}, which hold when $d_1 d_2=-1$,
are enough to ensure the valid relaxations $\F_0^+ \cap \F_s^+$ and
$\F_0^+ \cap \F_s^+ \cap H^1$. However, these relaxations may not be
sufficient to describe the conic and convex hulls.

If necessary, another way to generate valid conic inequalities
when $d_1>d_2$ is as follows. Instead of the original disjunction,
consider the weakened disjunction $c_1^Tx\geq d_2 \,\vee\, c_2^Tx\geq
d_2$, where $d_2$ replaces $d_1$ in the first term. Clearly any
point satisfying the original disjunction will also satisfy the new
disjunction. Therefore any valid inequality for the new disjunction
will also be valid for the original one. In Sections \ref{sec:d1=d2=0}
and \ref{sec:d1=d2}, we have discussed the conditions under which
Conditions \ref{ass:one_neg_eval}-\ref{ass:hyperplane} are satisfied
when $d_1=d_2$. Even if the new disjunction violates Condition
\ref{ass:non-intersecting}, as long as the original disjunction
satisfies Condition \ref{ass:non-intersecting}, the resulting
inequalities from this approach will be valid.

Regarding the existing literature, the conclusions at the end of Section
\ref{sec:ass:non-intersecting} also apply here.

\subsection{Conic sections} \label{sec:conicsections}

Let $\rho_1^Tx\geq d_1 \,\vee\, \rho_2^Tx\geq d_2$ be a disjunction on a
cross-section $\K \cap H^1$ of the second-order cone, where $H^1 = \{ x
: h^T x = 1 \}$. We work with an analogous of Condition
\ref{ass:non-intersecting}:

\begin{condition}\label{ass:non-intersecting-H1}
The disjunctive sets $\K_1 := \K \cap H^1 \cap \{ x : \rho_1^T x \ge
d_1 \}$ and $\K_2 := \K \cap H^1 \cap \{ x : \rho_2^T x \ge d_2 \}$ are
non-intersecting except possibly on their boundaries, e.g.,
\[
  \K_1 \cap \K_2
\subseteq
\left\{x \in \K \cap H^1 :
  \begin{array}{l} \rho_1^Tx = d_1 \\ \rho_2^Tx =  d_2 \end{array} \right\}.
\]
\end{condition}

\noindent We would like to characterize the convex hull of the
disjunction, which is the same as the convex hull of the disjunction
$(\rho_1 - d_1 h)^Tx \geq 0 \,\vee\, (\rho_2 - d_2 h)^Tx \geq 0$ on $\K
\cap H^1$. Defining $c_1:=\rho_1 - d_1 h$, $c_2:=\rho_2 - d_2 h$, $A_0
:= J$, and $A_1 := c_1 c_2 ^T + c_2 c_1^T$, our goal is to characterize
$\ccvh(\K \cap \F_1 \cap H^1)$. This is quite similar to the analysis
in Section \ref{sec:d1=d2=0} except that here we also must verify
Condition \ref{ass:hyperplane}.

Conditions \ref{ass:one_neg_eval} and \ref{ass:A0_apex}(i) are
easily verified, and Condition \ref{ass:interior} describes the
full-dimensional case of interest. Following the development in Section
\ref{sec:d1=d2=0}, we can verify Condition \ref{ass:As_null} when
$\|\tilde{\rho}_1 - d_1 \tilde{h}\|_2 > | \rho_{1,n} - d_1h_n |$ and
$\|\tilde{\rho}_2 - d_2 \tilde{h}\|_2 > | \rho_{2,n} - d_2 h_n |$,
and otherwise the convex hull is easy to determine. For Condition
\ref{ass:hyperplane}, we consider the cases of ellipsoids, paraboloids,
and hyperboloids separately.

Ellipsoids are characterized by $h \in \myint(\K)$, and so $\K \cap
H^0 = \{0\}$. Thus $\K \cap \F_s^+ \cap H^0 = \{ 0\} \subseteq \F_1$
easily verifying Condition \ref{ass:hyperplane}.
On the other hand, paraboloids are characterized by $0 \ne h \in \bd(\K)$,
and in this case, $\K \cap H^0 = \cone\{ \hat h \}$, where $\hat
h := -Jh = {-\tilde{h} \choose h_n}$. Thus, to verify Condition
\ref{ass:hyperplane}, it suffices to show $\hat h \in \F_s^+
\, \Rightarrow \, \hat h \in \F_1$. Indeed $\hat h \in \F_s^+$ implies
\[
  0 \ge \hat h^T A_s \hat h
  = (1-s) \, \hat h^T J \hat h + s \, \hat h^T A_1 \hat h
  = s \, \hat h^T A_1 \hat h
\]
because $h \in \bd(\K)$ ensures $\hat h^T J \hat h = 0$. So $\hat h \in \F_1$.

It remains only to verify Condition \ref{ass:hyperplane} for
hyperboloids, which are characterized by $h\notin\pm\K$, i.e., $h =
{\tilde h \choose h_n}$ satisfies $\|\tilde h\| > |h_n|$. However, it
seems difficult to verify Condition \ref{ass:hyperplane} generally.
Still, we note that $\hat h \in H^0$ implies
\[
  \hat h^T A_1 \hat h = 2 (c_1^T \hat h)(c_2^T \hat h) =
  2(\rho_1^T \hat h - d_1 h^T \hat h) (\rho_2^T \hat h - d_2 h^T \hat h) =
  2 (\rho_1^T \hat h)(\rho_2^T \hat h).
\]
Then Condition \ref{ass:hyperplane} would hold, for example, when
$\rho_1$ and $\rho_2$ satisfy the following, which is identical to
conditions discussed in Sections \ref{sec:d1=d2} and \ref{sec:d1>d2}:
there exists $\beta_1,\beta_2 \ge 0$ such that $\beta_1 \rho_1 + \rho_2
\in -\K$ and $\beta_2 \rho_1 + \rho_2 \in \K$. This covers the case of
split disjunctions, for example.

We remark that our analysis in this subsection covers all of the various
cases of split disjunctions found in \cite{MKV} and more. In particular,
we handle ellipsoids and paraboloids for all possible general two-term
disjunctions (including the non-disjoint ones). On the other hand,
the cases we can cover for hyperboloids is a subset of those recently
given in \cite{YC14}. Note that \cite{DDV2011} covers only split
disjunctions on ellipsoids. \cite{BGPRT} covers two-term disjunctions on
ellipsoids and certain specific two-term disjunctions on paraboloids and
hyperboloids satisfying their disjointness and boundedness assumptions.
None of the papers \cite{AJ2013,K-KY14,KY14} are relevant here. Finally,
when the disjunction correspond to the deletion of a convex set, the
paper \cite{BM} applies to the cases for ellipsoids and paraboloids
because those sets can be viewed as epigraphs of strictly convex
quadratics.

\section{General Quadratics with Conic Sections} \label{sec:general}

In this section, we examine the case of (nearly) general quadratics
intersected with conic sections of the SOC. For simplicity of
presentation, we will employ affine transformations of the sets $\F_0^+
\cap \F_1 \cap H^1$ of interest. It is clear that our theory is not
affected by affine transformations.

\subsection{Ellipsoids}

Consider the set
\[
  \left\{ y \in \R^n \ :
    \begin{array}{cc}
      y^T y \le 1 \\
      y^T Q y + 2 \, g^T y + f \le 0
    \end{array}
    \right\},
\]
where $\lambda_{\min}[Q] < 0$. Note that if $\lambda_{\min}[Q] \ge 0$, then the set is already convex. Allowing an affine transformation, this set 
models the intersection of any ellipsoid with a general quadratic
inequality. We can model this set in our framework by
homogenizing $x = {y \choose x_{n+1}}$ and taking
\[
  A_0 := \begin{pmatrix} I & 0 \\ 0^T & -1 \end{pmatrix}, \ \ \
  A_1 := \begin{pmatrix} Q & g \\ g^T & f \end{pmatrix}, \ \ \
  H^1 := \{ x : x_{n+1} = 1 \}.
\]
We would like to compute $\ccvh(\F_0^+ \cap \F_1 \cap H^1)$.

Conditions \ref{ass:one_neg_eval} and \ref{ass:A0_apex}(i) are clear,
and Condition \ref{ass:interior} describes the full-dimensional case
of interest. When $s < 1$, Condition \ref{ass:hyperplane} is satisfied
because, in this case, $\F_0^+ \cap H^0 = \{0\}$ making the containment
$\F_0^+ \cap \F_s^+ \cap H^0 \subseteq \F_1$ trivial. 
In Sections \ref{sec:subcase1} and \ref{sec:subcase2} below, we
break the analysis
of verifying Condition \ref{ass:As_null} 
into two subcases that we are able to handle: (i)
when $\lambda_{\min}[Q]$ has multiplicity $k \ge 2$; and (ii) when
$\lambda_{\min}[Q] \le f$ and $g=0$.

Subcase (i) covers, for example, the situation of deleting the interior
of an arbitrary ball from the unit ball. Indeed, consider
\[
  \left\{ x \in \R^n \ :
    \begin{array}{cc}
      x^T x \le 1 \\
      (x-c)^T(x-c) \ge r^2
    \end{array}
    \right\},
\]
where $c \in \R^n$ and $r > 0$ are the center and radius of the ball
to be deleted. Then case (i) holds with $(Q,g,f) = (-I,c,r^2 - c^T
c)$. On the other hand, subcase (ii) can handle, for example, the
deletion of the interior of an arbitrary ellipsoid from the unit
ball---as long as that ellipsoid shares the origin as its center. In
other words, the portion to delete is defined by $x^T E x < r^2$, for
some $E \succ 0$ and $r > 0$, and we take $(Q,g,f) = (-E,0,r^2)$. Note
that $\lambda_{\min}[Q] \le -f \Leftrightarrow \lambda_{\max}[E] \ge
r^2$, which occurs if and only if the deleted ellipsoid contains a
point on the boundary of the unit ball. This is the most interesting
case because, if the deleted ellipsoid were either completely inside
or outside the unit ball, then the convex hull would simply be the
unit ball itself. The subcase (ii) was also studied in Corollary 9
of \cite{MKV} and in \cite{BM}. Moreover, none of the other papers
\cite{AJ2013,BGPRT,DDV2011,K-KY14,KY14,YC14} can handle this case.

\subsubsection{When $\lambda_{\min}[Q]$ has multiplicity $k \ge 2$} \label{sec:subcase1}

Define $B_t := (1-t)I + tQ$ to be the top-left $n \times n$ corner
of $A_t$. Since $\lambda_{\min}[B_1] < 0$ with multiplicity $k \ge 2$,
there exists $r \in (0,1)$ such that: (i) $B_r \succeq 0$; (ii)
$\lambda_{\min}[B_r] = 0$ with multiplicity $k$; (ii) $B_t \succ
0$ for all $t < r$. We claim that $s = r$ as a consequence of the
interlacing of eigenvalues with respect to $A_t$ and $B_t$. Indeed, let
$\lambda^t_{n+1} := \lambda_{\min}[A_t]$ and $\lambda^t_n$ denote the
two smallest eigenvalues of $A_t$, and let $\rho^t_n$ and $\rho^t_{n-1}$
denote the analogous eigenvalues of $B_t$. It is well known that
\[
  \lambda^t_{n+1} \ \le \ \rho^t_n \ \le \ \lambda^t_n \ \le \ \rho^t_{n-1}.
\]
When $t < r$, we have $\lambda^t_{n+1} < 0 < \rho^t_n \le \lambda^t_n$,
and when $t = r$, we have $\lambda^r_{n+1} < 0 \le \lambda^r_n \le 0$,
which proves $s=r$.

Since $\dim(\Null(B_s)) = k \ge 2$ and $\dim(\myspan\{g\}^\perp) = n-1$,
there exists $0 \ne z \in \Null(B_s)$ such that $g^T z = 0$. We can show
that ${z \choose 0} \in \Null(A_s)$:
\[
  A_s {z \choose 0}
  =
  \begin{pmatrix}
    B_s & s\, g \\ s \, g^T & (1-s)(-1) + sf
  \end{pmatrix}
  {z \choose 0}
  =
  {B_s z \choose s \, g^T z} = {0 \choose 0}.
\]
Moreover, ${z \choose 0}^T A_1 {z \choose 0} = z^T B_1 z = z^T Q z <
0$ because $z \in \Null(B_s)$ if and only if $z$ is a eigenvector of
$B_1 = Q$ corresponding to $\lambda_{\min}[Q]$. This verifies Condition
\ref{ass:As_null}.

\subsubsection{When $\lambda_{\min}[Q] \le -f$ and $g = 0$} \label{sec:subcase2}

The argument is similar to the preceding subcase in Section
\ref{sec:subcase1}. Note that
\[
  A_t = \begin{pmatrix} (1-t)I + tQ & 0 \\ 0 & (1-t)(-1) + t f \end{pmatrix}
   =: \begin{pmatrix} B_t & 0 \\ 0 & \beta_t \end{pmatrix}
\]
is block diagonal, so that the singularity of $A_t$ is determined
by the singularity of $B_t$ and $\beta_t$. $B_t$ is 
first singular when $t = 1/(1 - \lambda_{\min}[Q])$, while $\beta_t$ is first
singular when $t = 1/(1+f)$ (assuming $f > 0$; if not, then
$\beta_t$ is never singular). Then
\[
  \frac{1}{1 - \lambda_{\min}[Q]} \le \frac{1}{1+f}
  \ \ \
  \Longleftrightarrow
  \ \ \
  \lambda_{\min}[Q] \le -f,
\]
which holds by assumption. So $B_t$ is singular before $\beta_t$,
leading to $s = 1/(1 - \lambda_{\min}[Q])$. Let $0 \ne z \in \Null(B_s)$.
Then, we have $Qz=-{1-s\over s} z$, and thus, ${z \choose 0} \in \Null(A_s)$ with ${z \choose 0}^T A_1 {z \choose 0} = z^T B_1 z = z^T Q z < 0$. Condition \ref{ass:As_null} is
hence verified.

\subsection{The trust-region subproblem}

We show in this subsection that our methodology can be used to solve the
trust-region subproblem (TRS)
\begin{equation} \label{equ:trs}
  \min_{\tilde y \in \R^{n-1}}
  \left\{ \tilde y^T \tilde Q \tilde y + 2 \, \tilde g^T \tilde y :~
  \tilde y^T \tilde y \le 1 \right\},
\end{equation}
where $\lambda_{\min}[\tilde Q] < 0$. Without loss of generality, we
assume that $\tilde Q$ is diagonal with $\tilde Q_{(n-1)(n-1)} =
\lambda_{\min}[\tilde Q]$ after applying an orthogonal transformation
that does not change the feasible set.

Our intention is not necessarily to argue that the TRS should be solved numerically with our approach, although
this is an interesting question left as future work. Our goal is
to illustrate that the well-known problem (\ref{equ:trs}) can be
handled by our machinery. We also believe that the corresponding SOCP
formulation for the TRS as opposed to its usual
SDP formulation is independently interesting. Our transformations
to follow require simply two eigenvalue decompositions and the
resulting SOCP can be solved by interior point solvers very
efficiently. We note that none of the previous papers, in particular,
\cite{AJ2013,BGPRT,DDV2011,K-KY14,KY14,MKV,YC14} have given a
transformation of the TRS into an SOC optimization
problem before. We recently became aware that an SOC based reformulation of TRS was also given in Jeyakumar and Li \cite{JeyakumarLi2013}; our approach parallels their developments from a different, convexification based, perspective.

We first argue that (\ref{equ:trs}) is equivalent to a trust-region
subproblem
\begin{equation} \label{equ:trsalt}
  \min_{y \in \R^n} \left\{ y^T Q y + 2 \, g^T y :~ y^T y \le 1 \right\}
\end{equation}
in the $n$-dimensional variable $y := { \tilde y \choose y_n }$.
Indeed, define
\[
  Q := \begin{pmatrix} \tilde Q & 0 \\ 0^T & \lambda_{\min}[\tilde Q] \end{pmatrix},
  \ \ \ 
  g := {\tilde g \choose 0},
\]
and note that $\lambda_{\min}[Q]$ has multiplicity at least 2. The
following proposition shows that (\ref{equ:trsalt}) is equivalent to
(\ref{equ:trs}).

\begin{proposition}
There exists an optimal solution of (\ref{equ:trsalt}) with $y_n
= 0$. In particular, the optimal values of (\ref{equ:trs}) and
(\ref{equ:trsalt}) are equal.
\end{proposition}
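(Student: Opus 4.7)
The plan is to start from any optimal solution $(\tilde y^*, y_n^*)$ of (\ref{equ:trsalt}), which exists by compactness of the feasible set, and to construct another feasible point whose last coordinate is zero and whose objective is no larger. The key structural observation is that $Q$ is block diagonal with $Q_{nn} = \tilde Q_{(n-1)(n-1)} = \lambda_{\min}[\tilde Q]$, and that $g_n = 0$, so $y_n$ enters the objective only through the quadratic term $\lambda_{\min}[\tilde Q]\, y_n^2$. That is, $y_n$ carries exactly the same ``quadratic cost per unit squared mass'' as $\tilde y_{n-1}$, which suggests absorbing all of $y_n^*$ into the $(n-1)$-st coordinate of $\tilde y^*$, with a sign chosen so that the linear term in $\tilde g_{n-1}$ does not get worse.

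Concretely, I would set $\tilde y'_i := \tilde y^*_i$ for $i \le n-2$, and
\[
\tilde y'_{n-1} := -\sign(\tilde g_{n-1})\, \sqrt{(\tilde y^*_{n-1})^2 + (y^*_n)^2}
\]
(with either sign chosen when $\tilde g_{n-1} = 0$). Feasibility of $(\tilde y', 0)$ in (\ref{equ:trsalt}) is immediate, since $\|\tilde y'\|^2 = \|\tilde y^*\|^2 + (y^*_n)^2 \le 1$. For the objective, the quadratic contribution at $(\tilde y', 0)$ equals
\[
\tilde y^{*T}\tilde Q \tilde y^* - \tilde Q_{(n-1)(n-1)}(\tilde y^*_{n-1})^2 + \lambda_{\min}[\tilde Q]\bigl((\tilde y^*_{n-1})^2 + (y^*_n)^2\bigr),
\]
which matches the quadratic contribution at $(\tilde y^*, y^*_n)$ exactly because $\tilde Q_{(n-1)(n-1)} = \lambda_{\min}[\tilde Q]$. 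The linear part differs only in the $(n-1)$-st component, and the sign choice, combined with $|\tilde y^*_{n-1}| \le \sqrt{(\tilde y^*_{n-1})^2 + (y^*_n)^2}$, yields $\tilde g_{n-1}\tilde y'_{n-1} = -|\tilde g_{n-1}|\sqrt{(\tilde y^*_{n-1})^2 + (y^*_n)^2} \le \tilde g_{n-1}\tilde y^*_{n-1}$. Hence $(\tilde y', 0)$ attains an objective value no larger than $(\tilde y^*, y_n^*)$, and by optimality the two values coincide.

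For the second statement, restricting (\ref{equ:trsalt}) to the slice $\{y_n = 0\}$ recovers (\ref{equ:trs}), so the above construction exhibits an optimal solution of (\ref{equ:trsalt}) that is also feasible for (\ref{equ:trs}) with the same objective. Conversely, any feasible $\tilde y$ of (\ref{equ:trs}) lifts to $(\tilde y, 0)$ feasible for (\ref{equ:trsalt}) with identical objective, giving the reverse inequality. There is no genuine obstacle here; the only delicate point is the sign choice that ensures the linear term does not increase, and this reduces to the one-line estimate $-|\tilde g_{n-1}|\, r \le \tilde g_{n-1}\tilde y^*_{n-1}$ with $r = \sqrt{(\tilde y^*_{n-1})^2 + (y^*_n)^2} \ge |\tilde y^*_{n-1}|$.
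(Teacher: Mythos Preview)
Your proof is correct and follows essentially the same approach as the paper: fix the first $n-2$ coordinates of an optimal solution and redistribute the mass from $(y_{n-1},y_n)$ into the $(n-1)$-st coordinate alone, choosing the sign according to $\tilde g_{n-1}$. The paper frames this step as solving a two-dimensional trust-region subproblem with a concave objective (hence optimal on the boundary of the disk $y_{n-1}^2+y_n^2\le\epsilon$), whereas you construct the new point directly and verify the objective does not increase; this is a minor presentational difference, and your version is arguably slightly more self-contained.
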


\begin{proof}
Let $\bar y$ be an optimal solution of $(\ref{equ:trsalt})$. Then
$(\bar y_{n-1};\bar y_n)$ is an optimal solution of the two-dimensional
trust-region subproblem
\[
  \min_{y_{n-1},y_n} \left\{  
  |\lambda_{\min}[\tilde Q]| (-y_{n-1}^2 - y_n^2) + 2 \tilde g_{n-1} y_{n-1} : ~
    ~y_{n-1}^2 + y_n^2 \le \epsilon
   \right\}.
\]
where 
$\epsilon := 1 - (\bar y_1^2 + \cdots \bar y_{n-2}^2)$. Since we are minimizing a concave function over the ellipsoid, at least one optimal solution will be on the boundary of this set. In particular, whenever $\tilde g_{n-1}>0$,  the solution ${y_{n-1}\choose y_n} ={-\sqrt{\epsilon}\choose 0}$ is optimal, and when  $\tilde g_{n-1}\leq0$, the solution ${y_{n-1}\choose y_n} ={\sqrt{\epsilon}\choose 0}$ is optimal. Thus,  
this problem has at least one optimal solution with $y_n = 0$.
Hence, $\bar y_n$ can be taken as 0.
\qed \end{proof}

\noindent With the proposition in hand, we now focus on the solution of
(\ref{equ:trsalt}).

A typical approach to solve (\ref{equ:trsalt}) is to introduce
an auxiliary variable $x_{n+2}$ (where we reserve the variable
$x_{n+1}$ for later homogenization) and to recast the problem
as
\[
  \min \left\{ x_{n+2} :~ \begin{array}{ll} y^T y \le 1 \\ y^T Q y + 2 \, g^T y \le x_{n+2}
    \end{array} \right\}.
\]
If one can compute the closed convex hull of this feasible set, then
(\ref{equ:trsalt}) is solvable by simply minimizing $x_{n+2}$ over the
convex hull. We can represent this approach in our framework by taking
$x = (y;x_{n+1};x_{n+2})$, homogenizing via $x_{n+1} = 1$, and defining
\[
A_0 := \begin{pmatrix} I & 0 & 0 \\ 0^T & -1 & 0 \\ 0^T & 0 & 0 \end{pmatrix}, \ \ \
A_1 := \begin{pmatrix} Q & g & 0 \\ g^T & 0 & -\tfrac12  \\ 0^T & -\tfrac12 & 0 \end{pmatrix}, \ \ \
H^1 := \{ x \in \R^{n+2} : x_{n+1} = 1 \}.
\]
Clearly, Conditions \ref{ass:one_neg_eval} and \ref{ass:interior}
are satisfied. However, no part of Condition \ref{ass:A0_apex} is
satisfied. So we require a different approach.

Since $x = 0$ is feasible for (\ref{equ:trsalt}), its optimal
value is nonpositive. (In fact, it is negative since $Q$ has a
negative eigenvector, so that $x=0$ is not a local minimizer). Hence,
(\ref{equ:trsalt}) is equivalent to
\begin{equation} \label{equ:trsalt2}
  v := \min \left\{ x_{n+2}^2 :~ \begin{array}{ll} y^T y \le 1 \\ y^T Q y
  + 2 \, g^T y \le -x_{n+2}^2 \end{array} \right\},
\end{equation}
which can be solved in stages: first, minimize $x_{n+2}$ over the
feasible set of (\ref{equ:trsalt2}) (let $l$ be the minimal value); second,
separately maximize $x_{n+2}$ over the same (let $u$ be the maximal value); and
finally take $v = \min\{ -l^2, -u^2 \}$. If one can compute the closed
convex hull of (\ref{equ:trsalt2}), then $l$ and $u$ can be computed
easily.

To represent the feasible set of (\ref{equ:trsalt2}) in our framework,
we define $x = (y;x_{n+1};x_{n+2})$ and take
\[
A_0 := \begin{pmatrix} I & 0 & 0 \\ 0^T & -1 & 0 \\ 0^T & 0 & 0 \end{pmatrix}, \ \ \
A_1 := \begin{pmatrix} Q & g & 0 \\ g^T & 0 & 0  \\ 0^T & 0 & 1 \end{pmatrix}, \ \ \
H^1 := \{ x \in \R^{n+2} :~ x_{n+1} = 1 \}.
\]
Clearly, Conditions \ref{ass:one_neg_eval} and \ref{ass:interior} are
satisfied, and Condition \ref{ass:A0_apex}(ii) is now satisfied. For
Conditions \ref{ass:As_null} and \ref{ass:hyperplane}, we note that
$A_t$ has a block structure such that $s$ equals the smallest positive
$t$ such that
\[
  B_t := (1-t) \begin{pmatrix} I & 0 \\ 0 & -1 \end{pmatrix}
  + t \begin{pmatrix} Q & g \\ g^T & 0 \end{pmatrix}
\]
is singular. Using an argument similar to Section \ref{sec:subcase1}
and exploiting the fact that $\lambda_{\min}[Q]$ has multiplicity
at least 2, we can compute $s$ such that there exists $0 \ne z \in
\Null(B_s) \subseteq \R^{n+1}$ with $z^T B_1 z < 0$ and $z_{n+1}
= 0$. By appending an extra 0 entry, this $z$ can be
easily extended to $z \in \R^{n+2}$ with $z^T A_1 z < 0$ and $z \in
H^0$. This simultaneously verifies Conditions \ref{ass:As_null} and
\ref{ass:hyperplane}.

\subsection{Paraboloids}

Consider the set
\[
  \left\{ y = {\tilde y \choose y_n} \in \R^n \ :~
    \begin{array}{cc}
      \tilde y^T \tilde y \le y_n \\
      \tilde y^T \tilde Q \tilde y + 2 \, g^T y + f \le 0
    \end{array}
    \right\},
\]
where $\lambda := \lambda_{\min}[\tilde Q] < 0$ and $2 g_n \le
-\lambda$. After an affine transformation, this models the intersection
of a paraboloid with any quadratic inequality that is strictly linear
in $y_n$, i.e., no quadratic terms involve $y_n$. Note that if
$\lambda_{\min}[Q] \ge 0$, then the set is already convex. The reason
for the upper bound on $2 g_n$ will become evident shortly.

Writing $g := {\tilde g \choose g_n}$, we can model this
situation with $x = {y \choose x_{n+1}}$ and
\[
  A_0 := \begin{pmatrix} I & 0 & 0 \\ 0^T & 0 & -\tfrac12 \\ 0^T & -\tfrac12 & 0 \end{pmatrix}, \ \ \
  A_1 := \begin{pmatrix} \tilde Q & 0 & \tilde g \\ 0^T & 0 & g_n \\ \tilde g^T & g_n & f \end{pmatrix}, \ \ \
  H^1 := \{ x : x_{n+1} = 1 \},
\]
and we would like to compute $\ccvh(\F_0^+ \cap \F_1 \cap H^1)$.
Conditions \ref{ass:one_neg_eval} and \ref{ass:A0_apex}(i) are clear,
and Condition \ref{ass:interior} describes the full-dimensional case
of interest. So it remains to verify Conditions \ref{ass:As_null}
and \ref{ass:hyperplane}.

Define
\[
  B_t := \begin{pmatrix} (1-t)I + t \tilde Q & 0 \\ 0 & 0 \end{pmatrix}
\]
to be the top-left $n \times n$ corner of $A_t$, and define $r :=
1/(1-\lambda)$. Due to its structure, $B_t$ is positive semidefinite
for all $t \le r$. Moreover, $B_t$ has exactly one zero eigenvalue
for $t < r$, and $B_r$ has at least two zero eigenvalues. Those two
zero eigenvalues ensure that $A_r$ is singular by the interlacing of
eigenvalues of $A_t$ and $B_t$ (similar to Section \ref{sec:subcase1}).
So $s \le r$.

We claim that in fact $s = r$. Let $t < r$; and consider the following
system for $\Null(A_t)$:
\[
  \begin{pmatrix}
    (1-t) I + t \tilde Q & 0 & t \, \tilde g \\
    0^T & 0 & (1-t)(-\tfrac12) + t \, g_n \\
    t \, \tilde g^T & (1 - t)(-\tfrac12) + t \, g_n & t  f
  \end{pmatrix}
  \begin{pmatrix}
    \tilde z \\ z_n \\ z_{n+1}
  \end{pmatrix}
  =
  \begin{pmatrix}
    0 \\ 0 \\ 0
  \end{pmatrix}.
\]
Note that $2 g_n \le -\lambda$ and $0\leq t < r$ imply
\begin{equation} \label{equ:local1}
2 \left[ (1-t)(-\tfrac12) + t \, g_n \right]
= t(1 + 2 \, g_n) - 1 \\
\le t(1 - \lambda) - 1 \\
< r(1 - \lambda) - 1 \\
= 0,
\end{equation}
which implies $z_{n+1} = 0$. This in turn implies $\tilde z = 0$
because $(1 - t)I + t \tilde Q \succ 0$ when $t < r$. Finally, $z_n =
0$ again due to (\ref{equ:local1}). So we conclude that $t < r$ implies
$\Null(A_t) = \{0\}$. Hence, $s = r$. 
We next write
\[
  A_s = \begin{pmatrix} B_s & g_s \\ g_s & sf \end{pmatrix}.
\]
Since $\dim(\Null(B_s)) \ge 2$ and $\dim(\myspan\{g_s\}^\perp) = n-1$,
there exists $0 \ne z \in \Null(B_s)$ such that $g_s^T z = 0$. From the
structure of $B_s$, we have $z = {\tilde z \choose z_n}$, where $\tilde
z$ is a negative eigenvector of $\tilde Q$. We claim that ${z \choose 0}
\in \Null(A_s)$. Indeed:
\[
  A_s {z \choose 0}
  =
  \begin{pmatrix}
    B_s & g_s \\ g_s^T & sf
  \end{pmatrix}
  {z \choose 0}
  =
  {B_s z \choose g_s^T z} = {0 \choose 0}.
\]
Moreover, ${z \choose 0}^T A_1 {z \choose 0} = z^T B_1 z = \tilde z^T
\tilde Q \tilde z < 0$. This verifies Conditions \ref{ass:As_null} and
\ref{ass:hyperplane}.

We remark that Corollary 8 in \cite{MKV} studies the closed convex
hull of the set
\[
\left\{ y = {\tilde y \choose y_n} \in\R^n:~ \|\tilde A(\tilde{y}-\tilde{c})\|^2\leq y_n,\,
\|\tilde D(\tilde{y}-\tilde{d})\|^2\geq -\gamma \, y_n +q \right\},
\]
where $\tilde A\in\R^{(n-1)\times (n-1)}$ is an invertible matrix,
$\tilde c,\tilde d\in\R^{n-1}$ and $\gamma\geq 0$. This situation
is covered by our theory here. 
The paper \cite{BM} also applies to this case, but none of the other papers
\cite{AJ2013,BGPRT,DDV2011,K-KY14,KY14,YC14} are relevant here.

\section{Conclusion} \label{sec:conclusion}

This paper provides basic convexity results regarding the intersection
of a second-order-cone representable set and a nonconvex quadratic.
Although several results have appeared in the prior literature, we unify
and extend these by introducing a simple, computable technique for
aggregating (with nonnegative weights) the inequalities defining the two
intersected sets. The underlying conditions of our theory can be checked
easily in many cases of interest.

Beyond the examples detailed in this paper, our technique can be used in
other ways. Consider for example, a general quadratically constrained
quadratic program, whose objective has been linearized without loss of
generality. If the constraints include an ellipsoid constraint, then
our techniques can be used to generate valid SOC inequalities for the
convex hull of the feasible region by pairing each nonconvex quadratic
constraint with the ellipsoid constraint one by one. The theoretical and
practical strength of this technique is of interest for future research,
and the techniques in \cite{Androulakis.et.al.1995,Kim.Kojima.2001} could
provide a good point of comparison.

In addition, it would be interesting to investigate whether our
techniques could be extended to produce valid inequalities or explicit
convex hull descriptions for intersections involving multiple
second-order cones or multiple nonconvex quadratics. After our initial
June 2014 submission of this paper, a similar aggregation idea has been
recently explored in \cite{MV14} in November 2014 by using the results
from \cite{Yildiran09}. We note that as opposed to our emphasis on
the computability of SOCr relaxations, these recent results rely on
numerical algorithms to compute such relaxations and further topological conditions
for verifying their sufficiency.

\section*{Acknowledgments}
The authors wish to thank the Associate Editor and anonymous referees for their constructive feedback which  improved the presentation of the material in this paper.  
The research of the second author is supported in part by NSF grant CMMI 1454548.


\bibliographystyle{abbrv}
\bibliography{references}

\newpage
\setcounter{page}{1}
\setcounter{section}{0}
\setcounter{figure}{0}

\medskip
\begin{center}
{\Large {\bf Online Supplement: Low-Dimensional Examples}}\\
\medskip
\begin{multicols}{2}
{\bf Samuel Burer}\\
{\footnotesize Department of Management Sciences\\
University of Iowa,\\
Iowa City, IA, 52242-1994, USA.\\
({\tt samuel-burer@uiowa.edu})}

{\bf Fatma K{\i}l{\i}n\c{c}-Karzan}\\
{\footnotesize Tepper School of Business\\
Carnegie Mellon University,\\
Pittsburgh, PA, 15213, USA.\\
 ({\tt fkilinc@andrew.cmu.edu})}
\end{multicols}
\end{center}


In this Online Supplement, we illustrate Theorem \ref{the:main} of the main article with several
low-dimensional examples and discuss which of the earlier approaches
\cite{AJ2013,BGPRT,BM,DDV2011,K-KY14,KY14,MKV,YC14} cannot replicate
these examples. Section \ref{sec:twoterm} of the main article is devoted to the
important case for which the dimension $n$ is arbitrary, $\F_0^+$ is the
second-order cone, and $\F_1$ represents a two-term linear disjunction
$c_1^T x \ge d_1 \vee c_2^T x \ge d_2$. Section \ref{sec:general} of the main article 
investigates cases in which $\F_1$ is given by a (nearly) general
quadratic inequality.

\section{A proper split of the second-order cone} \label{sec:sub:propersplitSOC}

In $\R^3$, consider the intersection of the canonical second-order
cone, defined by \sloppy $\|(y_1;y_2)\| \le y_3$, and a specific linear
disjunction, defined by $y_1 \le -1 \vee y_1 \ge 1$, which is a
proper split. By homogenizing via $x = {y \choose x_4}$ with $x_4
= 1$ and noting that the disjunction is equivalent to $y_1^2 \ge 1
\Leftrightarrow y_1^2 \ge x_4^2$, we can represent the intersection as
$\F_0^+ \cap \F_1 \cap H^1$ with
\[
  A_0 := \Diag(1,1,-1,0), \ \ \
  A_1 := \Diag(-1,0,0,1), \ \ \
  H^1 := \{ x : x_4 = 1 \}.
\]
Note that $A_t = \Diag(1-2t,1-t,-1+t,t)$. Conditions
\ref{ass:one_neg_eval} and \ref{ass:A0_apex}(ii) are easily verified,
and Condition \ref{ass:interior} holds with $\bar x := (2;0;3;1)$,
for example.

In this case, $s = \tfrac12$, $A_s = \tfrac12 \Diag(0,1,-1,1)$,
$\F_s = \{ x : x_2^2 + x_4^2 \le x_3^2 \}$, and $\F_s^+ = \{
x : \|(x_2;x_4)\| \le x_3 \}$, which contains $\bar x$. Note
that $\apex(\F_s^+) = \Null(A_s) = \myspan\{ d \}$, where $d :=
(1;0;0;0)$. It is easy to check that $d \in H^0$ with $d^T A_1 d < 0$,
and so Conditions \ref{ass:As_null} and \ref{ass:hyperplane} are
simultaneously verified.

So, in the original variable $y$, the explicit convex hull is given by 
\[
  \left\{ y : 
  \begin{array}{l}
    \|(y_1;y_2)\| \le y_3 \\
    \|(y_2;1)\| \le y_3 
  \end{array}
  \right\}
  =
  \ccvh\left\{ y :
  \begin{array}{l}
    \|(y_1;y_2)\| \le y_3 \\
    y_1 \le -1 \vee y_1 \ge 1
  \end{array}
  \right\}.
\]
Figure \ref{fig:Ex42} depicts the original intersection, $\F_s^+\cap H^1$, and
the closed convex hull.

\begin{figure}[htp]
\centering
  \subfigure[$\F_0^+ \cap \F_1 \cap H^1$]{%
    \label{Ex42F0F1}%
    \includegraphics[width=0.30\textwidth]{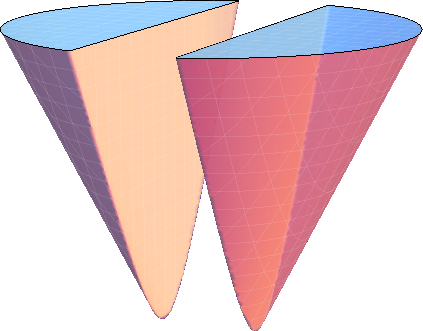}%
  } 
  \subfigure[$\F_s^+ \cap H^1$]{%
    \label{Ex42Fs}%
    \includegraphics[width=0.36\textwidth]{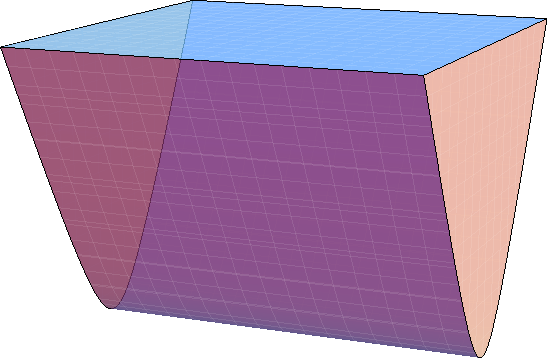}%
  }
  \subfigure[$\F_0^+ \cap \F_s^+ \cap H^1$]{%
    \label{Ex42F0Fs}%
    \includegraphics[width=0.30\textwidth]{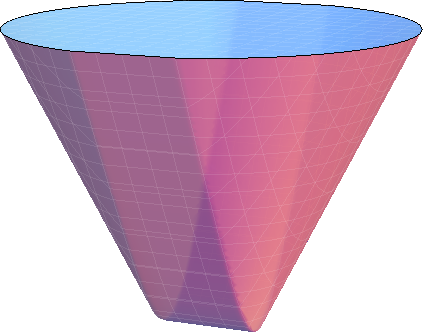}%
  }
\caption{A proper split of the second-order cone}
\label{fig:Ex42}
\end{figure}

Papers \cite{AJ2013,K-KY14,KY14,MKV} can handle this example, and in
fact they can handle all split disjunctions on SOCs. On the other
hand, \cite{BGPRT} cannot handle this example because of their
boundedness assumption on the sides of the disjunction. Because this
example concerns a disjunction on SOC itself---not a disjunction on a
cross-section of SOC---the papers \cite{DDV2011,YC14} are not relevant
here. In order to apply the results from \cite{BM}, we need to consider
the SOC $\|(y_1;y_2)\| \le y_3$ as the epigraph of the convex norm
$\|(y_1;y_2)\|$. However, this viewpoint does not satisfy the special
conditions for polynomial-time separability, such as differentiability
or growth rate, in that paper; see Theorem IV therein.

\section{A paraboloid and a second-order-cone disjunction} \label{sec:sub:parabSOC}

In $\R^3$, consider the intersection of the paraboloid defined by
$y_1^2 + y_2^2 \le y_3$ and the ``two-sided'' second-order cone
disjunction defined by $y_1^2 + y_3^2 \le y_2^2$. One side has $y_2 \ge
0$, while the other has $y_2 \le 0$. By homogenizing via $x = {y \choose
x_4}$ with $x_4 = 1$, we can represent the intersection as $\F_0^+ \cap
\F_1 \cap H^1$ with
\[
  A_0 :=
  \begin{pmatrix}
    1         & 0 & 0 & 0         \\
    0         & 1 & 0 & 0         \\
    0         & 0 & 0 & -\tfrac12 \\
    0 & 0 & -\tfrac12 & 0
  \end{pmatrix}, \ \ \
  A_1 :=
  \begin{pmatrix}
    1 &  0 & 0 & 0 \\
    0 & -1 & 0 & 0 \\
    0 &  0 & 1 & 0 \\
    0 &  0 & 0 & 0 
  \end{pmatrix}, \ \ \
  H^1 := \{ x : x_4 = 1 \}.
\]
Conditions \ref{ass:one_neg_eval} and \ref{ass:A0_apex}(i) are
straightforward to verify, and Condition \ref{ass:interior} is
satisfied with $\bar x = (0;\tfrac{1}{\sqrt2};\tfrac{1}{\sqrt3};1)$, for
example. We can also calculate $s = \tfrac12$ from (\ref{equ:def:s}). Then
\[
  A_s =
  \begin{pmatrix}
    1 &  0 & 0 & 0 \\
    0 &  0 & 0 & 0 \\
    0 &  0 & \tfrac12 & -\tfrac14 \\
    0 &  0 & -\tfrac14 & 0 
  \end{pmatrix}, \ \ \ \ \
  \F_s = \left\{ x : x_1^2 + \tfrac12 \, x_3^2 \le \tfrac12 \, x_3 x_4 \right\}.
\]
The negative eigenvalue of $A_s$ is $\lambda_{s1} := (1-\sqrt{2})/4$
with corresponding eigenvector $q_{s1} := (0;0;\sqrt{2}-1;1)$, and so,
in accordance with the Section \ref{sec:socr}, we have that $\F_s^+$
equals all $x \in \F_s$ satisfying $b_s^T x \ge 0$, where
\[
  b_s := (-\lambda_{s1})^{1/2} q_{s1}
  = \frac{\sqrt{\sqrt{2}-1}}{2}
  \begin{pmatrix}
    0 \\ 0 \\ \sqrt{2} - 1 \\ 1
  \end{pmatrix}.
\]
Scaling $b_s$ by a positive constant, we thus have
\[
  \F_s^+ := \left\{ x : \begin{array}{ll}
      x_1^2 + \tfrac12 \, x_3^2 \le \tfrac12 \, x_3 x_4 \\
      (\sqrt{2}-1) x_3 + x_4 \ge 0
    \end{array}
  \right\}.
\]
Note that $\bar x \in \F_s^+$. In addition, $\apex(\F_s^+) = \Null(A_s)
= \myspan\{d\}$, where $d = (0;1;0;0)$. Clearly, $d \in H^0$ and
$d^T A_1 d < 0$, which verifies Conditions \ref{ass:As_null} and
\ref{ass:hyperplane} simultaneously. Setting $x_4 = 1$ and returning to
the original variable $y$, we see
\[
  \left\{ y : 
  \begin{array}{l}
    y_1^2 + y_2^2 \le y_3 \\
    y_1^2 + \tfrac12 \, y_3^2 \le \tfrac12 \, y_3
  \end{array}
  \right\}
  =
  \ccvh\left\{ y :
  \begin{array}{l}
    y_1^2 + y_2^2 \le y_3 \\
    y_1^2 + y_3^2 \le y_2^2
  \end{array}
  \right\},
\]
where the now redundant constraint $(\sqrt{2}-1) y_3 + 1 \ge 0$ has been
dropped. Figure \ref{fig:Ex43} depicts the original intersection, $\F_s^+\cap H^1$, and
the closed convex hull.

\begin{figure}[htp]
\centering
  \subfigure[$\F_0^+ \cap \F_1 \cap H^1$]{%
    \label{Ex43F0F1}%
    \includegraphics[width=0.305\textwidth]{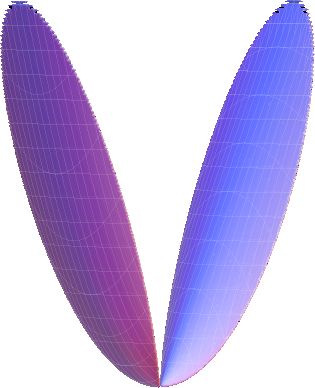}%
  } 
  \subfigure[$\F_s^+ \cap H^1$]{%
    \label{Ex43Fs}%
    \includegraphics[width=0.35\textwidth]{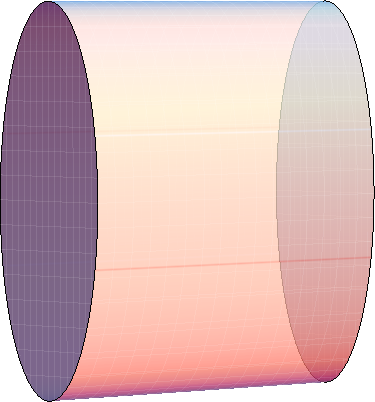}%
  }
  \subfigure[$\F_0^+ \cap \F_s^+ \cap H^1$]{%
    \label{Ex43F0Fs}%
    \includegraphics[width=0.305\textwidth]{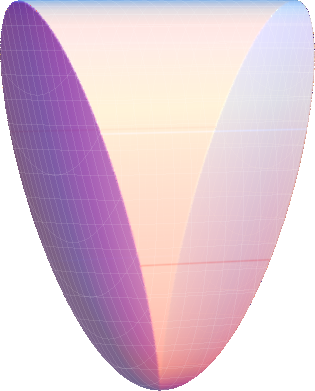}%
  }
\caption{A paraboloid and a second-order-cone disjunction}
\label{fig:Ex43}
\end{figure}

Of the earlier, related approaches, this example
can be handled by \cite{MKV} only. In particular,
\cite{AJ2013,BGPRT,DDV2011,K-KY14,KY14,YC14} cannot handle this example
because they deal with only split or two-term disjunctions but cannot
cover general nonconvex quadratics. The approach of \cite{BM} is based
on eliminating a convex region from a convex epigraphical set, but this
example removes a nonconvex region (specifically, $\R^n\setminus\F_1$).
So \cite{BM} cannot handle this example either.

In actuality, the results of \cite{MKV} do not handle this example
explicitly since the authors only state results for: the removal of a
paraboloid or an ellipsoid from a paraboloid; or the removal of an ellipsoid
(or an ellipsoidal cylinder) from another ellipsoid with a common
center. However, in this particular example, the function obtained from
the aggregation technique described in \cite{MKV} is convex on all of
$\R^3$. Therefore, their global convexity requirement on the aggregated
function is satisfied for this example.

\section{An example violating Condition \ref{ass:A0_apex}}\label{sec:sub:ex:violA0}

In $\R^2$, consider the intersection of the canonical second-order
cone defined by $|y_1| \le y_2$ and the set defined by the quadratic
$y_1(y_2-1) \le 0$. By homogenizing via $x = {y \choose x_3}$ with $x_3
= 1$, we can represent the set as $\F_0^+ \cap \F_1 \cap H^1$ with
\[
  A_0 :=
  \begin{pmatrix}
    1 &  0 & 0 \\
    0 & -1 & 0 \\
    0 &  0 & 0
  \end{pmatrix}, \ \ \
  A_1 :=
  \begin{pmatrix}
     0 &  1 & -1 \\
     1 &  0 &  0 \\
    -1 &  0 &  0 
  \end{pmatrix}, \ \ \
  H^1 := \{ x : x_3 = 1 \}.
\]
While Conditions \ref{ass:one_neg_eval} and \ref{ass:interior} hold,
Condition \ref{ass:A0_apex} does not hold because $A_0$ is singular and
$A_1$ is zero on the null space $\myspan\{(0;0;1)\}$ of $A_0$. Figure
\ref{fig:viol_Assum_3} depicts $\F_0^+ \cap \F_1 \cap H^1$ and
$\F_0^+ \cap \F_1$.

\begin{figure}[htp]
\centering
  \subfigure[$\F_0^+ \cap \F_1 \cap H^1$]{%
    \label{viol_Assum_3_F0F1H1}%
    \includegraphics[width=0.4\textwidth]{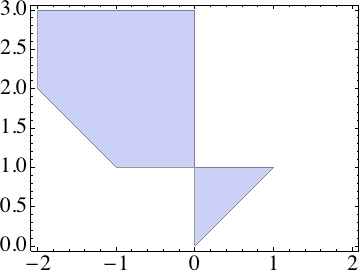}%
  } 
  \hspace*{0.25in}
  \subfigure[$\F_0^+ \cap \F_1$]{%
    \label{viol_Assum_3_F0F1}%
    \includegraphics[width=0.4\textwidth]{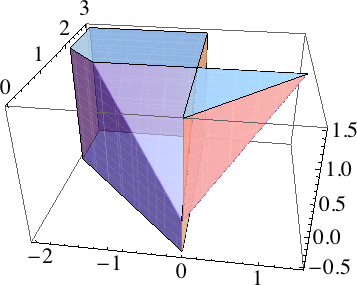}%
  }
\caption{An example violating Condition \ref{ass:A0_apex}}
\label{fig:viol_Assum_3}
\end{figure}

In this example, even though Condition \ref{ass:A0_apex} is violated,
we still have the trivial convex relaxation given by $\ccvh(\F_0^+ \cap
\F_1 \cap H^1)\subseteq \F_0^+ \cap H^1$. Of course, this trivial convex
relaxation is not sufficient.

The papers \cite{AJ2013,BGPRT,DDV2011,K-KY14,KY14,YC14} also cannot
handle this example because they deal with only split or two-term
disjunctions that are not general enough to cover general nonconvex
quadratics. Moreover, $\R^2\setminus\F_1$ defines a nonconvex region, so
neither of the approaches from \cite{BM,MKV} related to excluding convex
sets is applicable in this case.

\section{An example violating Condition \ref{ass:As_null}}
\label{sec:sub:ex:violA4}

In $\R^2$, consider the intersection of the second-order cone defined
by $|x_1| \le x_2$ and the two-term linear disjunction defined
by $x_1 \le 0 \, \vee \, x_2 \le x_1$. Note that, in the second-order
cone, $x_2 \le x_1$ implies $x_1 = x_2$. So one side
of the disjunction is contained in the boundary of the second-order
cone. We also note that---in the second-order cone---the disjunction is
equivalent to the quadratic $x_1(x_2 - x_1) \le 0$. Thus, to compute the
closed conic hull of the intersection of cone and the disjunction, we
define
\[
  A_0 := \begin{pmatrix} 1 & 0 \\ 0 & -1 \end{pmatrix},
  \ \ \ 
  A_1 :=
  \begin{pmatrix} -2 & 1 \\ 1 & 0 \end{pmatrix},
\]
and we wish to calculate $\ccnh(\F_0^+ \cap \F_1)$.

Conditions \ref{ass:one_neg_eval}, \ref{ass:interior}, and
\ref{ass:A0_apex}(i) are easily verified, and the eigenvalues of
$A_0^{-1} A_1$ are $-1$ (with multiplicity 2). This implies $s =
\tfrac12$ by (\ref{equ:def:s}), and so
\[
  A_s = \frac12 \begin{pmatrix} -1 & 1 \\ 1 & -1 \end{pmatrix}.
\]
Also, $\Null(A_s)$ is spanned by $d = (1;1)$, and yet $d^T A_1 d = 0$,
which violates Condition \ref{ass:As_null}.

\begin{figure}[htp]
\centering
  \subfigure[$\F_0^+ \cap \F_1$]{%
    \label{viol_Assum_4_F0F1}%
    \includegraphics[width=0.28\textwidth]{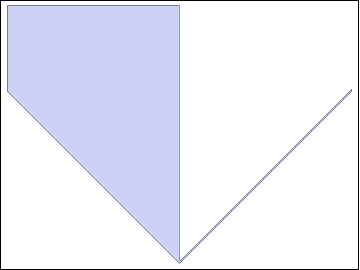}%
  } 
  \hspace*{0.2in}
  \subfigure[$\F_s^+$]{%
    \label{viol_Assum_4_Fs}%
    \includegraphics[width=0.28\textwidth]{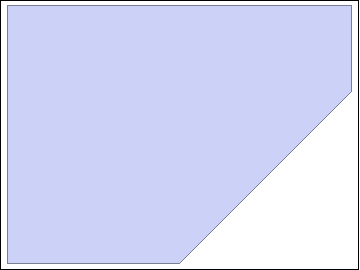}%
  }
  \hspace*{0.2in}
  \subfigure[$\F_0^+ \cap \F_s^+$]{%
    \label{viol_Assum_4_F0Fs}%
    \includegraphics[width=0.28\textwidth]{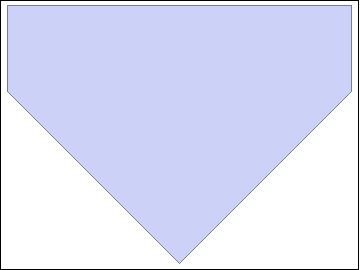}%
  }
\caption{An example violating Condition \ref{ass:As_null}}
\label{fig:viol_Assum_4}
\end{figure}

Note that $A_s = -\tfrac12 {1 \choose -1}{1 \choose -1}^T$, and so
$\F_s^+ = \{ x : x_2 \ge x_1 \}$. Figure \ref{fig:viol_Assum_4}
depicts $\F_0^+ \cap \F_1$, $\F_s^+$, and $\F_0^+ \cap \F_s^+$. Since
Conditions \ref{ass:one_neg_eval}--\ref{ass:A0_apex} are satisfied, we
know that $\ccnh(\F_0^+ \cap \F_1) \subseteq \F_0^+ \cap \F_s^+$, and it
is evident from the figures that---in this particular example---equality
holds. This simply indicates that the results of Theorem \ref{the:main}
may still hold even when Condition \ref{ass:As_null} is violated.

The approach \cite{AJ2013} can only handle split disjunctions on SOCs
and thus is not applicable here. This is also the case for that portion
of the approach from \cite{MKV} associated with split disjunctions.
Moreover, \cite{K-KY14,KY14} cannot handle this two-term disjunction
because of their strict feasibility assumption on both sides of the
sets defined by the disjunction. Also, \cite{BGPRT} cannot handle
this example because of their boundedness assumption on both of the
sets defined by the disjunction. In addition, $\R^2\setminus\F_1$
defines a nonconvex region, therefore neither of the approaches from
\cite{BM,MKV} related to excluding convex sets is applicable in this
case. Since this example concerns a disjunction on SOC itself but not on
the cross-section of an SOC, \cite{DDV2011,YC14} are not relevant here.

\section{An example violating Condition \ref{ass:hyperplane}}
\label{sec:sub:ex:violA5}

In $\R^2$, consider the intersection of the second-order cone defined
by $|y_1| \le y_2$ and the two-term linear disjunction defined by
$y_1\geq 2 \,\vee\, y_2 \leq 1$. Note that, in the second-order cone,
the disjunction is equivalent to the quadratic $(y_1 - 2)(1 - y_2) \le
0$. Thus, to compute the closed conic hull of the intersection of cone
and the disjunction, we define $x = {y \choose x_3}$ and
\[
  A_0 := \begin{pmatrix} 1 & 0 & 0 \\ 0 & -1 & 0 \\ 0 & 0 & 0 \end{pmatrix},
  \ \ \ 
  A_1 :=
  \frac12 \begin{pmatrix} 0 & -1 & 1 \\
   -1 & 0 & 2 \\
   1 & 2 & -4 \end{pmatrix}, \ \ \
   H^1 := \{ x : x_3 = 1 \}
\]
and we wish to calculate $\ccnh(\F_0^+ \cap \F_1 \cap H^1)$.

Conditions \ref{ass:one_neg_eval}, \ref{ass:interior}, and
\ref{ass:A0_apex}(iii) are easily verified, and so $s=0$ with
$\Null(A_s)$ spanned by $d = (0;0;1)$. Then Condition
\ref{ass:As_null} is clearly satisfied. However, $d_3 \ne 0$, and so the
first option for Condition \ref{ass:hyperplane} is not satisfied. The
second option is the containment $\F_0^+ \cap \F_s^+ \cap H^0 \subseteq
\F_1$, which simplifies to $\F_0^+ \cap H^0 \subseteq \F_1$ in this case. This
is also {\em not\/} true because the point $x = (1;2;0) \in \F_0^+ \cap
H^0$ but $x \not\in \F_1$.

\begin{figure}[htp]
\centering
  \subfigure[$\F_0^+ \cap \F_1 \cap H^1$]{%
    \label{viol_Assum_5_F0F1H1}%
    \includegraphics[width=0.33\textwidth]{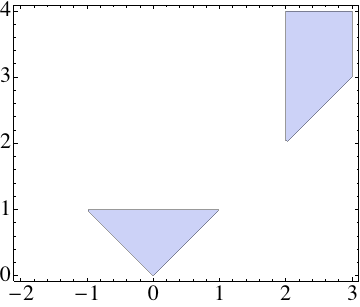}%
  } 
  \hspace*{0.2in}
  \subfigure[$\F_0^+ \cap \F_1$]{%
    \label{viol_Assum_5_F0F1}%
    \includegraphics[width=0.33\textwidth]{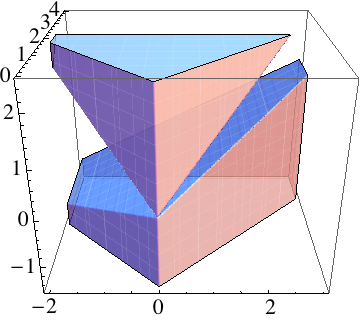}%
  }  \\
  \subfigure[$\F_s^+ = \F_0^+ \cap \F_s^+$]{%
    \label{viol_Assum_5_Fs}%
    \includegraphics[width=0.33\textwidth]{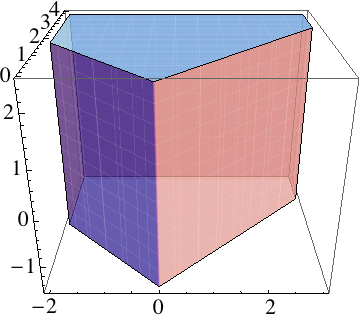}%
  } 
  \hspace*{0.2in}
  \subfigure[$\F_0^+ \cap \F_s^+ \cap H^1$]{%
    \label{viol_Assum_5_F0FsH1}%
    \includegraphics[width=0.33\textwidth]{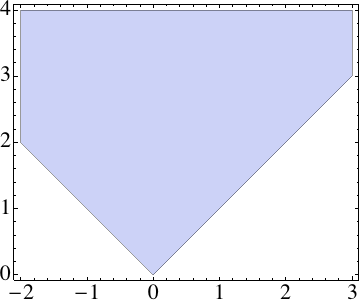}%
  }
\caption{An example violating Condition \ref{ass:hyperplane}. Note
that $s=0$ in this case.}
\label{fig:viol_Assum_5}
\end{figure}

Figure \ref{fig:viol_Assum_5} depicts this example.
Note that the inequality $y_1 \ge -1$ is valid for the convex
hull of $\F_0^+ \cap \F_1 \cap H^1$. In addition, $\F_0^+ \cap
\F_s^+ =\ccnh(\F_0^+ \cap \F_1)$ because Conditions
\ref{ass:one_neg_eval}--\ref{ass:As_null} are satisfied. However, 
the projection $\F_0^+ \cap \F_s^+ \cap H^1$ is not the desired
convex hull since, for example, it violates $y_1 \ge -1$.

Similar to the previous example in Section \ref{sec:sub:ex:violA4}, the
papers \cite{AJ2013,BGPRT,BM,DDV2011,MKV,YC14} cannot handle this
example. On the other hand, \cite{K-KY14,KY14} provide the infinite
family of convex inequalities describing the closed convex hull of this
set, but they do not specifically identify the corresponding finite
collection that is necessary and sufficient.

\end{document}